\newtheorem{Th}{Theorem}[section]
\newtheorem{Cor}[Th]{Corollary}
\newtheorem{Prop}[Th]{Proposition}
\newtheorem{Lem}[Th]{Lemma}
{ \theoremstyle{definition}
\newtheorem{Def}[Th]{Definition}
\newtheorem{Rem}[Th]{Remark}
\newtheorem{Ex}[Th]{Example}}
\begin{document}


\newcommand{\arXivNumber}{1503.01529}

\renewcommand{\PaperNumber}{081}

\FirstPageHeading

\ShortArticleName{Monge--Amp{\`e}re Systems with Lagrangian Pairs}

\ArticleName{Monge--Amp{\`e}re Systems with Lagrangian Pairs}

\Author{Goo ISHIKAWA~$^\dag$ and Yoshinori MACHIDA~$^\ddag$}

\AuthorNameForHeading{G.~Ishikawa and Y.~Machida}

\Address{$^\dag$~Department of Mathematics, Hokkaido University, Sapporo 060-0810, Japan}
\EmailD{\href{mailto:ishikawa@math.sci.hokudai.ac.jp}{ishikawa@math.sci.hokudai.ac.jp}}

\Address{$^\ddag$~Numazu College of Technology, 3600 Ooka, Numazu-shi, Shizuoka, 410-8501, Japan}
\EmailD{\href{mailto:machida@numazu-ct.ac.jp}{machida@numazu-ct.ac.jp}}

\ArticleDates{Received April 10, 2015, in f\/inal form October 05, 2015; Published online October 10, 2015}

\Abstract{The classes of Monge--Amp{\`e}re systems, decomposable and
bi-decomposable Monge--Amp{\`e}re systems,
including equations for improper af\/f\/ine spheres and hypersurfaces of constant Gauss--Kronecker
curvature are introduced. They are studied by the clear geometric setting of
Lagrangian contact structures, based on the existence of Lagrangian pairs
in contact structures.
We show that the Lagrangian pair is uniquely determined by such a~bi-decomposable system
up to the order, if the number of independent variables $\geq 3$.
We remark that, in the case of three variables,
each bi-decomposable system is generated by a~non-degenerate three-form
in the sense of Hitchin.
It is shown that several classes of homogeneous Monge--Amp{\`e}re systems with Lagrangian pairs
arise naturally in various geometries.
Moreover we establish the upper bounds on the symmetry dimensions of
decomposable and bi-decomposable Monge--Amp{\`e}re systems respectively
in terms of the geometric structure and we show that these estimates are sharp
(Proposition~\ref{Hess=0} and Theorem~\ref{maximal symmetry}). }

\Keywords{Hessian Monge--Amp{\`e}re equation; non-degenerate three form;
bi-Legendrian f\/ibration; Lagrangian contact structure; geometric structure; simple graded Lie algebra}

\Classification{58K20; 53A15; 53C42}

\section{Introduction}

{\bf 1.1.}
The second-order partial dif\/ferential equation
\begin{gather*}
\mathrm{Hess}(f)=
\det\left(
\frac{\partial^2 f}{\partial x_i\partial x_j}
\right)_{1\leq i, j \leq n} = c \quad (\text{$c$ is constant}, \ c \not= 0),
\end{gather*}
for a scalar function $f$ of $n$ real variables $x_i$, $i = 1, 2, \dots, n$,
describes improper (parabolic)
af\/f\/ine hyperspheres
$
z = f(x_1, \dots, x_n)
$
and it
plays a signif\/icant role in equi-af\/f\/ine geometry (see \cite{N-S} for example).
Similarly the equation of constant Gaussian (Gauss--Kronecker) curvature
\begin{gather*}
K = c   \quad (\text{$c$ is constant})
\end{gather*}
for hypersurfaces is important
in Riemannian geometry (see \cite{K-N} for example).
Note that it is written, for graphs $z = f(x_1, \dots, x_n)$,
as the equation
\begin{gather*}
\mathrm{Hess}(f) =
(-1)^n c \big(1 + p_1^2 + \cdots + p_n^2\big)^{\frac{n+2}{2}},
\end{gather*}
where $p_i = \frac{\partial f}{\partial x_i}$.
Therefore the equations
${\mathrm{Hess}}(f) = c$ and
$K = c$ are regarded as Monge--Amp{\`e}re
equations, and they are studied from geometric aspects in this paper.
If we treat these equations in the framework of
{\it Monge--Amp{\`e}re systems},
then we realize that they have a specif\/ic character.

In~\cite{I-M}, we treated improper af\/f\/ine spheres and
constant Gaussian curvature surfaces in~${\mathbf{R}}^3$
from the view point of Monge--Amp{\`e}re equations of two variables,
and we analyzed the singula\-ri\-ties of their geometric solutions.
There we ef\/fectively used the direct sum decomposition of
the standard contact structure $D \subset T{\mathbf{R}}^5$ on~${\mathbf{R}}^5$
into a pair of two Lagrangian plane f\/ields~$E_1$,~$E_2$,
namely a {\it Lagrangian pair}.

Based on the notion of
Lagrangian pairs generalized to the higher-dimensional cases, namely, for
contact manifolds of dimension $2n+1$,
we introduce decomposable and bi-decomposable Monge--Amp{\`e}re systems with Lagrangian pairs
in Section~\ref{Monge-Ampere systems and Lagrangian pairs}.
A decomposable (resp.\ a~bi-decomposable) Monge--Amp{\`e}re system is def\/ined by
a decomposable $n$-form (resp.\ a~sum of two decomposable $n$-forms) which is compatible
with the underlying Lagrangian pair.
The class of Monge--Amp{\`e}re systems with Lagrangian pairs, which is introduced in this paper,
is invariant under contact transformations. If a Monge--Amp{\`e}re system is isomorphic to a Monge--Amp{\`e}re system
with a Lagrangian pair, then it is accompanied with a Lagrangian pair (see Def\/inition~\ref{isomorphism}).
On the other hand, it is not trivial that
the Lagrangian pair is uniquely asso\-cia\-ted to a given Monge--Amp{\`e}re
system. Then we are led to natural questions:
Is the Lagrangian pair uniquely determined
by the decomposable (resp.\ the bi-decomposable form) form?
Is the Lagrangian pair recovered
only from the Monge--Amp{\`e}re system?

We see that Lagrangian pair is not determined by the decomposable form.
Any decomposable Monge--Amp{\`e}re system with Lagrangian pair $(E_1, E_2)$
is of Lagrangian type in the sense of
\cite{M-M}, and the Lagrangian subbundle $E_1$ is obtained as the characteristic system of the Monge--Amp{\`e}re system (see also
\cite{Mo2} and \cite[Chapter~V]{B-C-3G}).
However the complementary Lagrangian subbundle~$E_2$ is not uniquely determined.

In Section~\ref{Lagrangian pair and bi-decomposable forms.},
we show a close relation between Lagrangian pairs
and bi-decomposable forms, and give an answer to the above
questions by showing that a bi-decomposable Monge--Amp{\`e}re system has the unique
$n$-form as a local generator
up to a multiplication by a non-zero function and modulo the contact form
(Theorem~\ref{bi-decomposable-form}) and that
such a bi-decomposable form uniquely determines the associated Lagrangian pair $(E_1, E_2)$ uniquely,
provided $n\geq 3$ (Theorem~\ref{Uniqueness}).
Thus we see that any automorphism of a given Monge--Amp{\`e}re system
with Lagrangian pair induces an automorphism of the underlying Lagrangian pair, if $n \geq 3$.
Note that Lagrangian pair is not determined by the bi-decomposable form
if $n = 2$ (Remark~\ref{n=2}).

{\bf 1.2.}
It follows that
the study of Monge--Amp{\`e}re systems with Lagrangian pairs has close relation with
the theory of Takeuchi \cite{Tak} on ``Lagrangian contact structures''.

From the viewpoint of geometric structures,
the comparison of the Lagrangian contact structures and Monge--Amp{\`e}re systems with Lagrangian pairs
goes as follows:
we treat Monge--Amp{\`e}re systems with
Lagrangian pairs on $M = P(T^*W)$, the projective cotangent bundle
over a manifold~$W$ of dimension $n+1$ in the following two cases.

For the f\/irst case, if the base space
$W$ has an af\/f\/ine structure, then $M = P(T^*W)$ has the natural
Lagrangian contact structure, i.e., a Lagrangian pair, see~\cite{Tak}.
Moreover a Monge--Amp{\`e}re system with the Lagrangian pair on~$M$ is
naturally induced,  if~$W$ has an equi-af\/f\/ine structure.
Here an {\it equi-affine structure} on~$W$ means that
$W$ is equipped with a torsion-free linear connection and a parallel volume
form on $W$, see~\cite{N-S}.
Furthermore if $W$ is the af\/f\/ine f\/lat
${{\mathbf{R}}}^{n+1}$ or the torus $T^{n+1}$, then we have
the generalization of the Hessian constant equation ${\mathrm{Hess}}(f) = c$.

For the second case,
we take a Lagrangian contact structure on
$M=P(T^*W)$ or on the unit tangent bundle $T_1W$ over~$W$
with the projective structure induced from a Riemannian metric on~$W$.
Recall that the projective structure is def\/ined as the equivalence class
of the Levi-Civita connection, under the projective equivalence on
torsion-free linear connections which is determined by the set of un-parametrized geodesics.
Moreover we consider a Monge--Amp{\`e}re system with Lagrangian pair on~$M$
induced from the volume of the Riemannian metric on~$W$.
Furthermore if~$W$ is a projectively f\/lat
Riemannian manifold, that is, one of the spaces
${{\mathbf{E}}}^{n+1}$, $S^{n+1}$, $H^{n+1}$ with constant curvature,
we obtain the generalization of the Gaussian curvature constant equation $K = c$
as an ``Euler--Lagrange''  Monge--Amp\`ere system (see
Section~\ref{Hesse representations.}
and Sections~\ref{Homogeneous M-A systems with Lagrangian pairs.}.2--\ref{Homogeneous M-A systems with Lagrangian pairs.}.5).

We summarize those subjects as the chart:
\begin{gather*}
\begin{array}{@{}c@{\,}c@{\,}c@{}}
W^{n+1} & &  M^{2n+1}=P(T^*W)
\vspace{0.2truecm}
\\
\left[
{\begin{array}{@{}l@{}}
{\mbox{\rm an equi-af\/f\/ine structure, }}\\
{\mbox{\rm the volume structure of a Riemannian metric}}
\end{array}
}
\right]
\vspace{0.2truecm}
& \longleftrightarrow &
{\mbox{\rm a M-A system with Lagrangian pair}}
\\
\downarrow & & \downarrow
\vspace{0.2truecm}
\\
\left[
{\begin{array}{@{}l@{}}
{\mbox{\rm an af\/f\/ine structure,}} \\
{\mbox{\rm a projective structure}}
\end{array}
}
\right]
\vspace{0.2truecm}
& \longleftrightarrow &
{\mbox{\rm a Lagrangian contact structure}}
\end{array}
\end{gather*}
Here the lower row indicates the underlying structures and
the upper row indicates the additional structures.

{\bf 1.3.}
In Section~\ref{Lagrangian contact structures.} we recall the theory of Takeuchi.
In Section~\ref{Automorphisms}, we study the sym\-met\-ries of a Monge--Amp{\`e}re system
with a Lagrangian pair. Using the results in this paper,
we show that the local automorphisms of
a Monge--Amp{\`e}re system with
a Lagrangian pair form a f\/inite-dimensional Lie pseudo-group,
provided $n\geq 3$.
We determine the maximal dimension of the automorphism pseudo-groups
of the Monge--Amp{\`e}re systems with f\/lat Lagrangian pairs
(Theorem~\ref{maximal symmetry}).

Based on those aspects, we characterize a class of Monge--Amp{\`e}re systems
which includes the equations ${\mathrm{Hess}}(f) = c$ and $K = c$, $c \not= 0$.
In fact,
the class of Monge--Amp{\`e}re equations of type
\begin{gather*}
\mathrm{Hess}(f) = F(x_1, \dots, x_n, f(x), p_1, \dots, p_n), \qquad F \not= 0,
\end{gather*}
is characterized and called the class of
{\it Hesse Monge--Amp{\`e}re systems} in Section~\ref{Hesse representations.}.
We observe that the class of Hesse Monge--Amp{\`e}re systems
is invariant under the contact transformations in the cases $n \geq 3$
(Proposition~\ref{contact invariance}). For instance,
the Legendre dual of the above equation is well def\/ined and given by
\begin{gather*}
\mathrm{Hess}(f) =
\frac{1}{F\left(p_1, \dots, p_n,
\sum\limits_{i=1}^n x_i p_i - f(x), x_1, \dots, x_n\right)}.
\end{gather*}
Note that in the case $n = 2$,
${\rm Hess}(f) = \pm 1$ is transformed to
the Laplace equation \mbox{$f_{x_1x_1} {+} f_{x_2x_2} {=} 0$} or to the wave equation
$f_{x_1x_1} - f_{x_2x_2} = 0$.
Therefore the class of Hesse Monge--Amp{\`e}re systems is not invariant
under the contact transformations in the case $n = 2$.

In the case $n = 3$, any Monge--Amp{\`e}re system of the
class is given by a non-degenerate three-form
which is decomposed uniquely
up to ordering by two decomposable forms (see \cite{B1,H,K-L-R,L-R-C} and
also Section~\ref{Lagrangian pair and bi-decomposable forms.}).
This fact and
its generalizations are the basic reasons
behind the above observation.

Further, we provide the unif\/ied picture of various subclasses
of Monge--Amp{\`e}re
equations with signif\/icant examples in arbitrary dimensions from
various geometric frameworks.

In Section~\ref{A method to construct systems with Lagrangian pairs.},
we introduce the general method to construct
Euler--Lagrange Monge--Amp{\`e}re system.
We apply the method of construction to several situations and obtain
several illustrative examples.
In Section~\ref{Homogeneous M-A systems with Lagrangian pairs.},
based on the general method, we show that
homogeneous Monge--Amp{\`e}re systems with
f\/lat Lagrangian pairs arise
in a very natural manner,
in equi-af\/f\/ine geometry, in Euclidean geometry,
in sphere geometry, in hyperbolic geometry, and moreover
in Minkowski geometry.
For these geometries, we construct
Monge--Amp{\`e}re systems with Lagrangian pairs explicitly and globally.
Moreover we show that the estimate proved in Section~\ref{Automorphisms} is
best possible by providing the example with the maximal symmetry.

{\bf 1.4.}
In this paper we treat, as underlying manifolds for Monge--Amp{\`e}re
systems, contact manifolds of dimensions $\geq 5$.
We remark that $3$-dimensional contact manifolds with
Lagrangian pairs
are related to second-order ordinary dif\/ferential equations
with normal form. They are studied in detail in \cite{A,I-L}.

As in \cite{B1,B2,K-L-R, L-R-C},
Lagrangian pairs can be formulated, at least locally,
on symplectic manifolds by means of reduction process.
In this paper we adopt
the contact framework of Monge--Amp{\`e}re equations based on Lagrangian
contact structures.

We remark that the paper \cite{M-M} treats in detail
the class of decomposable Monge--Amp{\`e}re systems
or Monge--Amp{\`e}re systems with one decomposability, which
are modeled on the Monge--Amp{\`e}re equation
$\mathrm{Hess}(z) =0$.

We will solve the equivalence problem of Monge--Amp{\`e}re systems with Lagrangian
pairs in the subsequent paper.

\section[Monge-Amp{\`e}re systems and Lagrangian pairs]{Monge--Amp{\`e}re systems and Lagrangian pairs}
\label{Monge-Ampere systems and Lagrangian pairs}

We start with the general def\/inition of Monge--Amp{\`e}re systems
\cite{B-G-G,L, Mo1,Mo2}.
Recall that a~contact structure $D$ on a~mani\-fold~$M$
is a subbundle of~$TM$ of codimension one locally def\/ined by a~contact
$1$-form $\theta$ by $D = \{ \theta = 0\}$ such that~$d\theta$ is non-degenerate,
that is, symplectic, on the bundle~$D$.
A~manifold endowed with a contact structure is called a contact manifold.
It is known that the dimension of a contact manifold is odd.

Let $(M, D)$ be a contact manifold of dimension $2n+1$ with
the contact structure \mbox{$D \subset TM$}.
A~{\it Monge--Amp{\`e}re system}
on $M$ is by def\/inition an exterior dif\/ferential system ${\mathcal M} \subset \Omega_M$
ge\-ne\-rated locally by a contact form $\theta$ for $D$ and an
$n$-form $\omega$ on~$M$: for each point $x \in M$,
there exists an open neighborhood~$U$ of~$x$ in~$M$ such that,
algebraically,
\begin{gather*}
{\mathcal M}\vert_U =
\langle \theta, d\theta, \omega
\rangle_{\Omega_U}.
\end{gather*}
Here $\Omega_M$ (resp.\ $\Omega_U$)
is the sheaf of germs of
exterior dif\/ferential forms
on $M$ (resp.\ on~$U$). In this case we call
$\omega$ a {\it local generator} of the Monge--Amp{\`e}re system
${\mathcal M}$ (modulo the contact ideal $\langle \theta, d\theta \rangle$).
Note that one may assume that $\omega$ is ef\/fective, i.e.,
$d\theta \wedge \omega \equiv
0\ (\rm{mod}\ \theta)$.
Also note that the $(n+1)$-form
$d\omega$ belongs to the contact ideal locally necessarily (see \cite{B1,B2, L}).

Let $D \subset TM$ be a vector bundle of rank $2n$ in the tangent bundle of a manifold $M$.
Recall that a conformal symplectic structure on $D$ is a reduction of the structure group of $D$
to the conformal symplectic group ${\rm CSp}({\mathbf{R}}^{2n})$.
If $(M, D)$ is a contact manifold of dimension $2n+1$, then
the conformal symplectic structure on $D$ is def\/ined locally by $d\theta$ for a contact form
$\theta$ which gives $D$ locally.
In particular, for each point $x \in M$, $D_x$ has the symplectic structure which is determined uniquely up to
a multiplication of a non-zero constant.
We call a linear subspace $W \subset D_x$
{\it Lagrangian} if $W$ is isotropic for the conformal symplectic structure on $D_x$ and $\dim_{{\mathbf{R}}} W = n$.
A subbundle $E \subset D$ is called a {\it Lagrangian subbundle} if $E_x \subset D_x$ is Lagrangian for any $x \in M$.

Now we def\/ine the key notion in this paper.

\begin{Def}
Let $(M, D)$ be a contact manifold.
A {\it Lagrangian pair} is a pair $(E_1, E_2)$
of Lagrangian subbundles of $D$ with respect to the conformal symplectic structure
on $D$ which satisf\/ies the condition $D = E_1 \oplus E_2$.
\end{Def}

\begin{Rem}
In \cite[\S~5.2]{B}, the notion of bi-Lagrangian structure is def\/ined as the transverse pair of Lagrangian foliations in a symplectic manifold.
Since we treat the contact case, it might be natural to use
the terminology  ``Legendrian'' instead of ``Lagrangian''.
However we would like to use ``Lagrangian'' in the general cases, and
to use the terminology ``Legendrian'' just for the integrable cases, such as ``Legendrian submanifolds'' and
``Legendrian f\/ibrations'' (see Section~\ref{Hesse representations.}).
\end{Rem}

The standard example of Lagrangian pair is given as follows.

{\bf The standard example.}
The standard example of Lagrangian pair is given on
the standard Darboux model.
Consider ${\mathbf{R}}^{2n+1}$ with coordinates
$(x_1,\dots,x_n,z,p_1,\dots,p_n)$ and with the standard
contact structure
$D_{\rm{st}} = \{ v \in TM \,|\, \theta_{\rm{st}}(v) = 0\}$ def\/ined by
the standard contact form
$\theta_{\rm{st}} = dz - \sum\limits_{i=1}^np_idx_i$.
Then we set
\begin{gather*}
E^{\rm{st}}_1   =
\{ v \in D_{\rm{st}} \,|\,
dp_1(v) = \cdots = dp_n(v) = 0 \} = \left\langle \frac{\partial}{\partial x_1} +
p_1\frac{\partial}{\partial z},
\dots, \frac{\partial}{\partial x_n} + p_n\frac{\partial}{\partial z}\right\rangle,
\\
E^{\rm{st}}_2  =
\{ u \in D_{\rm{st}} \,|\,
dx_1(u) = \cdots = dx_n(u) = 0\} =
\left\langle \frac{\partial}{\partial p_1},
\dots, \frac{\partial}{\partial p_n}\right\rangle.
\end{gather*}
Then $(E^{\rm{st}}_1, E^{\rm{st}}_2)$ is a Lagrangian pair on
$({\mathbf{R}}^{2n+1}, D_{\rm{st}})$.

Note that,
in \cite{Tak}, Takeuchi called
a contact structure $D$ endowed with a Lagrangian pair $(E_1,E_2)$
a {\it Lagrangian contact structure} $(D; E_1,E_2)$ and
gave a detailed study on this geometric structure.

Moreover we consider an exterior dif\/ferential system associated to a given
Lagrangian pair or Lagrangian contact structure.

\begin{Def}
A Monge--Amp{\`e}re system $\mathcal{M}$ is called
a {\it decomposable Monge--Amp{\`e}re system with a Lagrangian pair} $(E_1,E_2)$
if in a neighborhood of each point of $M$, there exists a local generator $\omega$
of $\mathcal{M}$
satisfying the following {\it decomposing condition}:
\begin{gather*}
i_u\omega = 0\ (u\in E_1), \quad \text{and} \quad
\omega\vert_{E_2}\ \text{is a volume form on}\ E_2.
\end{gather*}
\end{Def}

Such a decomposable Monge--Amp{\`e}re system
of Lagrangian type is a decomposable Monge--Amp{\`e}re system with
the characteristic system $E_1$ in the sense of~\cite{M-M}.
Note that this class of Monge--Amp{\`e}re equations have been introduced in~\cite{G}.
Also note that decomposable Monge--Amp{\`e}re systems are studied from another perspective in~\cite{A-B-M-P} by the name of Goursat equations.

\begin{Def}
A Monge--Amp{\`e}re system $\mathcal{M}$ is called
a {\it bi-decomposable Monge--Amp{\`e}re system with a Lagrangian pair} $(E_1,E_2)$
if, around each point of $M$, there exists a local generator~$\omega$
of~$\mathcal{M}$
of the form
\begin{gather*}
\omega=\omega_1-\omega_2
\end{gather*}
by $n$-forms $\omega_1$ and $\omega_2$
satisfying the following {\it bi-decomposing condition}:
\begin{gather*}
i_u\omega_1=0\quad (u\in E_2),\qquad i_v\omega_2=0\quad (v\in E_1),
\\
\omega_1| _{E_1}\ \text{is a volume form on}\ E_1,\quad
\mathrm{and}\quad \omega_2| _{E_2}\ \text{is a volume form on}\ E_2.
\end{gather*}
We call such an $n$-form $\omega$ a {\it bi-decomposable form} and
$(\omega_1, \omega_2)$ a {\it bi-decomposition} of~$\omega$.
Given a~Lagrangian pair $(E_1,E_2)$ on $(M, D)$ and $n$-forms $\omega_1$,
$\omega_2$ satisfying the bi-decomposing condition for $(E_1,E_2)$,
we def\/ine a Monge--Amp{\`e}re system
${\mathcal M}$ with Lagrangian pair
by setting $\omega = \omega_1-\omega_2$.
\end{Def}

\begin{Rem}
An immersion $f\colon  L \to M$ of an $n$-dimensional manifold $L$ to $M$
is called a~{\it geometric solution} of a Monge--Amp{\`e}re system
${\mathcal M} = \langle \theta, d\theta, \omega\rangle$ if
$f^*{\mathcal M} = 0$, namely, if $f^*\theta = 0$, i.e., $f$ is a Legendrian
immersion, and $f^*\omega = 0$.

Any geometric solution to
a decomposable (resp.\ bi-decomposable) Monge--Amp{\`e}re system~${\mathcal M}$
with a Lagrangian pair $D = E_1\oplus E_2$
on a contact manifold $(M^{2n+1}, D)$ has a crucial property.

In fact, an immersion $f\colon L \to M$ is a geometric solution of a decomposable Monge--Amp{\`e}re system
with a~Lagrangian pair $D = E_1\oplus E_2$ if and only if $f_*(T_pL) \cap (E_1)_{p} \not= \{ 0\}$.

For a bi-decomposable Monge--Amp{\`e}re system ${\mathcal M}$
with a Lagrangian pair $D = E_1\oplus E_2$,
suppose that $f\colon L \to M$ is a geometric solution of ${\mathcal M}$.
Then, for any $p \in L$, we see that
\begin{gather*}
E_1 \cap f_*(T_pL) = \{ 0\},\qquad \text{if and only if}\quad E_2 \cap f_*(T_pL) = \{ 0\},
\end{gather*}
equivalently,
\begin{gather*}
E_1 \cap f_*(T_pL) \not= \{ 0\},\qquad \text{if and only if}\quad E_2 \cap f_*(T_pL) \not= \{ 0\}.
\end{gather*}
In fact, let $W$ be an $n$-plane
in $D_p = (E_1)_p \oplus (E_2)_p$ satisfying
$\omega\vert_W = 0$.
The direct sum decomposition
def\/ines projections $\pi_1\colon D_p \to (E_1)_p$ and $\pi_2\colon D_p \to (E_2)_p$.
Then $E_1 \cap W = \{ 0\}$ if and only if $\pi_2\vert_{W}\colon W \to
(E_2)_p$ is an isomorphism, and
$E_2 \cap W = \{ 0\}$ if and only if $\pi_1\vert_{W}\colon W \to
(E_1)_p$ is an isomorphism, respectively.
For any bi-decomposition $\omega = \omega_1 - \omega_2$ and
for any basis $u_1, \dots, u_n$ of $W$, we have
\begin{gather*}
\omega_1(\pi_1(u_1), \dots, \pi_1(u_n)) =
\omega_1(u_1, \dots, u_n) =
\omega_2(u_1, \dots, u_n) =
\omega_2(\pi_2(u_1), \dots, \pi_2(u_n)).
\end{gather*}
Using the bi-decomposing condition again,
we see that $\pi_1\vert_{W}$ is an isomorphism if and only if
the most left hand side is non-zero, and it is equivalent to
the condition that
$\pi_2\vert_{W}$ is an isomorphism.
\end{Rem}

Now we are led to natural questions:
\begin{itemize}\itemsep=0pt
\item Is the Lagrangian pair $(E_1,E_2)$ uniquely determined
by a decomposable form?

\item Is the Lagrangian pair $(E_1,E_2)$ uniquely determined
by a bi-decomposable form?

\item Is the Lagrangian pair $(E_1, E_2)$ recovered
only from the Monge--Amp{\`e}re system $\mathcal{M}$?
\end{itemize}

As is stated in Introduction, the f\/irst question is answered negatively.
To answer the second and third questions, we recall the basic def\/initions.

\begin{Def}
\label{isomorphism}
Let $(M, D)$,  $(M', D')$ be contact manifolds of dimension $2n+1$,
and ${\mathcal M}$, ${\mathcal M'}$ Monge--Amp{\`e}re systems
on contact manifolds $(M,D)$, $(M',D')$ respectively.
A dif\/feomorphism $\Phi\colon M \longrightarrow M'$ is called
an {\it isomorphism of Monge--Amp{\`e}re systems}
if (1)~$\Phi$ is a~contactomorphism,
namely $(\Phi_*)D=D'$, and
(2)~${\Phi}^*{\mathcal M'}={\mathcal M}$.
\end{Def}

Now suppose that contact manifolds $(M, D)$,  $(M', D')$ of dimension
$2n+1$ are endowed with
Lagrangian pairs $(E_1, E_2)$, $(E'_1, E'_2)$ respectively,
namely, that the decompositions $D = E_1\oplus E_2$ and $D' = E'_1\oplus E'_2$
are given. Suppose $n \geq 3$.
Then, from the result in
Section~\ref{Lagrangian pair and bi-decomposable forms.}
mentioned above, we have that
any isomorphism ${\Phi}$ of a Monge--Amp{\`e}re system
${\mathcal M}$ with the Lagrangian pair $(E_1, E_2)$ and
a Monge--Amp{\`e}re system
${\mathcal M}'$ with the Lagrangian pair $(E'_1, E'_2)$
necessarily preserves the Lagrangian pairs up to ordering, namely,
$(\Phi_*)E_1 = E'_1$, $(\Phi_*)E_2 = E'_2$  or
$(\Phi_*)E_1 = E'_2$, $(\Phi_*)E_2 = E'_1$.

In the case $n = 2$, a Lagrangian pair is not uniquely determined
from a Monge--Amp{\`e}re system and moreover, the automorphism pseudo-group
may be of inf\/inite dimension. For instance, consider the equation
${\rm Hess} = -1$ which is isomorphic to the linear wave
equation $f_{x_1x_1} - f_{x_2x_2} = 0$ and to the equation
$f_{x_1x_2} = 0$. Then the last equation has inf\/inite-dimensional
automorphisms induced by dif\/feomorphisms $(x_1, x_2)
\mapsto (X_1(x_1), X_2(x_2))$.

\section{Lagrangian pair and bi-decomposable form}
\label{Lagrangian pair and bi-decomposable forms.}

Let $(M, D)$ be a contact manifold of dimension $2n+1$ with
a contact structure $D \subset TM$.
We have def\/ined in Section~\ref{Monge-Ampere systems and Lagrangian pairs} the notion of bi-decomposing
conditions and bi-decomposable forms on~$(M, D)$.
Then, f\/irst we show

\begin{Lem}
\label{bi-decomposability}
Let $\omega$ be an $n$-form on a contact manifold $(M, D)$,
$D = \{ u \in TM \,|\, \theta(u) = 0\}$
for a local contact form $\theta$ defining $D$,
and $(E_1, E_2)$
a Lagrangian pair of~$D$.
Assume that $\omega$ is
a~bi-decomposable form for $(E_1, E_2)$,
and $\omega = \omega_1 - \omega_2$ is
any bi-decomposition of $\omega$ for $(E_1, E_2)$.
Then locally there exists a coframe
$\theta, \alpha_1, \dots, \alpha_n, \beta_1, \dots, \beta_n$
of $T^*M$ such that
\begin{gather*}
E_1 = \{ v \in D \,|\, \beta_1(v) = \cdots = \beta_n(v) = 0\}, \qquad
E_2 = \{ u \in D \,|\, \alpha_1(u) = \cdots = \alpha_n(u) = 0\},
\end{gather*}
and that the $n$-forms
\begin{gather*}
\widetilde{\omega}_1 = \alpha_1\wedge \cdots \wedge \alpha_n,
\qquad
\widetilde{\omega}_2 = \beta_1\wedge \cdots \wedge \beta_n
\end{gather*}
satisfy the bi-decomposing condition for
$(E_1, E_2)$ with
\begin{gather*}
\widetilde{\omega}_1 \equiv \omega_1, \qquad
\widetilde{\omega}_2 \equiv \omega_2, \qquad
\omega \equiv \widetilde{\omega}_1 - \widetilde{\omega}_2
\end{gather*}
up to a multiple of $\theta$.
\end{Lem}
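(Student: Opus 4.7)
The plan is to build the required coframe from a frame adapted to the Lagrangian pair and to extract the necessary rescalings of $\omega_1$ and $\omega_2$ from the bi-decomposing conditions via a standard contraction argument. Concretely, I would first choose a local vector field $R$ transverse to $D$ together with local frames $e_1,\ldots,e_n$ of $E_1$ and $f_1,\ldots,f_n$ of $E_2$, so that $(R,e_1,\ldots,e_n,f_1,\ldots,f_n)$ is a local frame of $TM$. Let $\theta',\alpha_1,\ldots,\alpha_n,\beta_1,\ldots,\beta_n$ be its dual coframe. Then $\theta'$ annihilates $D=E_1\oplus E_2$, hence $\theta'=h\theta$ for a nonvanishing function $h$, so reduction modulo $\theta'$ coincides with reduction modulo $\theta$. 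By construction $\alpha_i$ vanishes on $R$ and on $E_2$, while $\beta_j$ vanishes on $R$ and on $E_1$; this gives the annihilator descriptions $E_2=\{\alpha_i=0\}\cap D$ and $E_1=\{\beta_j=0\}\cap D$ required by the statement.

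Next I feed the bi-decomposing condition on $\omega_1$ into a contraction argument. Expanding $\omega_1$ in the basis of $n$-forms built from $\theta,\alpha_i,\beta_j$, contraction with each $f_k$ deletes precisely those basis monomials that contain the factor $\beta_k$; combined with $i_{f_k}\omega_1=0$, iterating over $k=1,\ldots,n$ forces $\omega_1$ to contain no $\beta$-factor at all. Since $\omega_1$ has degree $n$ and only the $n$ one-forms $\alpha_1,\ldots,\alpha_n$ remain alongside $\theta$, this yields
\begin{gather*}
\omega_1 \equiv c\,\alpha_1\wedge\cdots\wedge\alpha_n \pmod{\theta}
\end{gather*}
for some smooth function $c$, and evaluating on $(e_1,\ldots,e_n)$ gives $c=\omega_1(e_1,\ldots,e_n)\neq 0$ by the hypothesis that $\omega_1\vert_{E_1}$ is a volume form on $E_1$. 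Replacing $\alpha_1$ by $c\alpha_1$ preserves the annihilator description of $E_2$, and with this rescaling $\widetilde{\omega}_1:=\alpha_1\wedge\cdots\wedge\alpha_n$ satisfies both $\widetilde{\omega}_1\equiv\omega_1\pmod{\theta}$ and the bi-decomposing condition for $(E_1,E_2)$ (since $\alpha_i\vert_{E_2}=0$ automatically, and $\widetilde{\omega}_1(e_1,\ldots,e_n)=c\neq 0$).

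The symmetric argument applied to $\omega_2$ with the roles of $E_1$ and $E_2$ interchanged produces a nonvanishing coefficient $d=\omega_2(f_1,\ldots,f_n)$; after replacing $\beta_1$ by $d\beta_1$, the resulting $\widetilde{\omega}_2:=\beta_1\wedge\cdots\wedge\beta_n$ satisfies $\widetilde{\omega}_2\equiv\omega_2\pmod{\theta}$ together with its bi-decomposing condition, and the final congruence $\omega\equiv\widetilde{\omega}_1-\widetilde{\omega}_2\pmod{\theta}$ is immediate from $\omega=\omega_1-\omega_2$. I do not expect any serious obstacle here; the only point that demands a touch of care is the contraction step showing that $\omega_1$ cannot carry any $\beta$-factor, but that is routine multilinear algebra in the fiber once the adapted coframe is in place.
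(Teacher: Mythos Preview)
Your proposal is correct and follows essentially the same approach as the paper: choose local frames of $E_1$ and $E_2$ together with a transverse vector field (the paper uses the Reeb field, you allow any transversal), pass to the dual coframe, use the bi-decomposing condition $i_{f_k}\omega_1=0$ to conclude $\omega_1\equiv c\,\alpha_1\wedge\cdots\wedge\alpha_n\pmod{\theta}$, and rescale $\alpha_1$ to absorb the nonvanishing coefficient. Your contraction argument is spelled out more explicitly than the paper's, but the content is the same.
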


\begin{proof}
The proof is based on the fact that the symplectic group on a f\/inite-dimensional symplectic vector space
acts transitively on the set of transversal pairs of Lagrangian subspaces.

Let $X_1, \dots, X_n$ and $P_1, \dots, P_n$ be local frames
of~$E_1$ and~$E_2$ respectively.
Let~$R$ be the Reeb vector f\/ield
for a local contact form $\theta$ def\/ining $D$. Recall that~$R$ is def\/ined uniquely by $i_R \theta = 1$, $i_R d\theta = 0$.
Consider
the dual coframe $\theta, \alpha_1,\dots,\alpha_n, \beta_1,\dots,\beta_n$
of~$T^*M$
to the frame~$R$, $X_1, \dots, X_n$, $P_1, \dots, P_n$ of~$TM$.
Then we see, from the bi-decomposing condition,
that there exist an $(n-1)$-form $\gamma$ and a~non-vanishing function $\mu$ on $M$ such that
$\omega_1 = \mu(\alpha_1 \wedge \cdots \wedge \alpha_n) + \theta\wedge\gamma$.
By replacing $\alpha_1$ by $\frac1{\mu}\alpha_1$,
we may suppose $\mu \equiv 1$.
Similarly, we have $\omega_2 \equiv
\beta_1 \wedge \cdots \wedge \beta_n \ \rm{mod}\ \theta$.
\end{proof}

Next we show that $\mathcal{M}$ has the unique
local bi-decomposable generator $\omega$ modulo $\theta$.

\begin{Th}
\label{bi-decomposable-form}
Let $(E_1, E_2)$ be a Lagrangian pair and
$\omega$, $\omega'$ be two bi-decomposable $n$-forms
for the Lagrangian pair $(E_1, E_2)$ on $(M, D)$.
Assume that they generate the same Monge--Amp{\`e}re system
\begin{gather*}
{\mathcal M} =
\langle \theta, d\theta, \omega \rangle
=
\langle \theta, d\theta, \omega' \rangle.
\end{gather*}
Then there exist
locally a non-vanishing function $\mu$ and an $(n-1)$-form $\eta$
on $M$ such that $\omega' = \mu \omega + \theta \wedge \eta$.
\end{Th}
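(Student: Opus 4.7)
The plan is as follows. Since $\omega$ generates the Monge--Amp\`ere system $\mathcal{M}$ and $\omega'\in\mathcal{M}$, matching degrees inside the algebraic ideal $\langle\theta,d\theta,\omega\rangle$ produces a local decomposition
$\omega'=\mu\,\omega+\theta\wedge\eta+d\theta\wedge\zeta$
for some function $\mu$, $(n-1)$-form $\eta$, and $(n-2)$-form $\zeta$. The theorem then reduces to showing that $\mu$ is nowhere zero and that the $d\theta\wedge\zeta$ summand can be absorbed into a $\theta$-multiple term.

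I would first invoke Lemma~\ref{bi-decomposability} applied to $\omega$ to fix an adapted coframe $\theta,\alpha_1,\dots,\alpha_n,\beta_1,\dots,\beta_n$ in which $\alpha_i$ annihilates $E_2$, $\beta_j$ annihilates $E_1$, and $\omega\equiv\alpha_1\wedge\cdots\wedge\alpha_n-\beta_1\wedge\cdots\wedge\beta_n\pmod\theta$. Running the same analysis for the bi-decomposition $\omega'=\omega'_1-\omega'_2$ in this \emph{same} coframe: the annihilator of $E_2$ in $T^*M$ is spanned by $\theta,\alpha_1,\dots,\alpha_n$, so the condition $i_u\omega'_1=0$ for $u\in E_2$ forces $\omega'_1$ into the exterior subalgebra generated by those forms, and degree together with the volume condition on $E_1$ gives $\omega'_1=a\,\alpha_1\wedge\cdots\wedge\alpha_n+\theta\wedge(\cdots)$ with $a$ nowhere zero. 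Symmetrically $\omega'_2=b\,\beta_1\wedge\cdots\wedge\beta_n+\theta\wedge(\cdots)$ with $b$ nowhere zero, hence
$\omega'\equiv a\,\alpha_1\wedge\cdots\wedge\alpha_n-b\,\beta_1\wedge\cdots\wedge\beta_n\pmod\theta$.

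The main step is a bigrading on $\bigwedge^{\bullet}T^*M/\langle\theta\rangle$ counting $\alpha$'s versus $\beta$'s. The Lagrangian conditions $d\theta(X_i,X_j)=0$ and $d\theta(P_i,P_j)=0$ on frames $X_i,P_j$ of $E_1,E_2$ force the $\alpha\wedge\alpha$ and $\beta\wedge\beta$ parts of $d\theta$ to vanish mod $\theta$, so $d\theta$ is pure of bidegree $(1,1)$ mod $\theta$. Consequently $d\theta\wedge\zeta\pmod\theta$ is supported only in mixed bidegrees $(k,l)$ with $k,l\geq 1$, whereas $\omega$ and $\omega'$ mod $\theta$ live only in the pure bidegrees $(n,0)$ and $(0,n)$. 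Reducing $\omega'=\mu\omega+\theta\wedge\eta+d\theta\wedge\zeta$ modulo $\theta$ and comparing bihomogeneous parts yields $\mu=a=b$ from the pure components (so $\mu$ is nowhere zero) and $d\theta\wedge\zeta\equiv 0\pmod\theta$ from the mixed ones; writing $d\theta\wedge\zeta=\theta\wedge\xi$ gives $\omega'=\mu\omega+\theta\wedge(\eta+\xi)$, as required. The only subtle point is verifying that the bigrading descends to $\Omega^{\bullet}/\langle\theta\rangle$ and that both $\omega$ and $\omega'$ appear in pure bidegrees in a single coframe chosen for $\omega$; once this structural fact is set up, the rest is linear algebra.
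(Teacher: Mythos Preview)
Your argument is correct and takes a genuinely different route from the paper. The paper writes $\omega'=\lambda\omega+d\theta\wedge\phi+\theta\wedge\eta$, restricts to each fibre $D_x$, observes that bi-decomposable forms are effective (Corollary~\ref{effective}), and then invokes Lychagin's theorem (Lemma~\ref{Lych}, or equivalently the Lefschetz isomorphism) to conclude that $\omega'\vert_{D_x}$ and $\omega\vert_{D_x}$ are proportional; this is packaged as Lemma~\ref{bi-decomposable2}. By contrast, you exploit the hypothesis that $\omega$ and $\omega'$ are bi-decomposable \emph{for the same} Lagrangian pair: the resulting $(\alpha,\beta)$-bigrading on $\bigwedge^\bullet D^*$ puts $d\theta$ in pure bidegree $(1,1)$, forces $d\theta\wedge\zeta$ into the mixed range, and lets you read off $\mu=a=b$ and $d\theta\wedge\zeta\equiv 0\pmod\theta$ by pure component matching. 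Your approach is more elementary and self-contained, avoiding any appeal to Lychagin or Lefschetz; the paper's approach, on the other hand, yields the slightly stronger intermediate Lemma~\ref{bi-decomposable2}, which applies even when $\omega$ and $\omega'$ are bi-decomposable for possibly different Lagrangian pairs.
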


To show Theorem~\ref{bi-decomposable-form},
we study, for each $x \in M$, the symplectic exterior linear algebra
on the symplectic vector space $V = D_x$ with the symplectic form
$\Theta = d\theta\vert_{D_x}$ and with the decomposition
$V = V_1\oplus V_2$, $V_1 = (E_1)_x$, $V_2 = (E_2)_x$,
of $(V, \Theta)$ into Lagrangian subspaces.

Let $(V,\Theta)$ be a $2n$-dimensional symplectic vector space.
We say that an $n$-form $\omega \in \wedge^n V^*$ is
{\it bi-decomposable} if
there exist a decomposition $V = V_1\oplus V_2$ of~$V$ into
Lagrangian sub\-spa\-ces~$V_1$,~$V_2$ in~$V$ and
$n$-forms $\omega_1, \omega_2 \in \wedge^n V^*$ such
that $\omega = \omega_1 - \omega_2$,
$i_u\omega_1 = 0$ $(u \in V_2)$, $i_v\omega_2 = 0$ $(v \in V_1)$,
$\omega_1\vert_{V_1} \not= 0$, $\omega_2\vert_{V_2} \not= 0$.
In this case $(\omega_1, \omega_2)$ is called a {\it bi-decomposition}
of~$\omega$.
Then, similarly as the proof of Lemma~\ref{bi-decomposability}, we have that
there exists a basis
$\{ a_1,\dots, a_n, b_1,\dots, b_n\}$ of~$V$
such that
\begin{gather*}
\omega_1 = a_1^*\wedge \cdots \wedge a_n^*, \qquad
\omega_2 = b_1^*\wedge \cdots \wedge b_n^*,
\end{gather*}
and that
\begin{gather*}
V_1= \langle a_1,\dots, a_n\rangle,
\qquad
V_2= \langle b_1,\dots, b_n\rangle,
\end{gather*}
where $\{ a_1^*,\dots, a_n^*, b_1^*,\dots, b_n^*\}$ denotes the dual basis
of $\{ a_1,\dots, a_n, b_1,\dots, b_n\}$.
Note that
$V_2$ (resp.~$V_1$) coincides with the annihilator of $a_1^*,\dots, a_n^*$
(resp.~$b_1^*,\dots, b_n^*$).
Moreover we see that
there exist a symplectic basis $\{ a_1,\dots, a_n, b_1,\dots, b_n\}$
of $(V, \Theta)$
and a non-zero constants~$a$,~$b$ such that
\begin{gather*}
\omega =
a   a_1^*\wedge \cdots \wedge a_n^* - b   b_1^*\wedge \cdots \wedge b_n^*.
\end{gather*}
If we replace $a_1$, $b_1$ by $(1/b) a_1$, $b b_1$
respectively,
so $a_1^*$, $b_1^*$ by $b a_1^*$, $(1/b) b_1^*$,
and set $c = ab$, then we have the following:

\begin{Lem}
\label{symplectic basis}
Let $\omega \in \wedge^n V^*$ be a bi-decomposable $n$-form on
a $2n$-dimensional symplectic vector space $(V, \Theta)$, and
$\omega = \omega_1 - \omega_2$ a~bi-decomposition of~$\omega$.
Then there exist a symplectic basis
$\{ a_1,\dots, a_n, b_1,\dots, b_n\}$ of~$(V, \Theta)$
and a non-zero constant $c$ such that
\begin{gather*}
\omega_1  =
c  a_1^*\wedge \cdots \wedge a_n^*, \qquad
\omega_2  =   b_1^*\wedge \cdots \wedge b_n^*,
\\
\omega  =  c  a_1^*\wedge \cdots \wedge a_n^* -
b_1^*\wedge \cdots \wedge b_n^*,
\qquad
\Theta  =  a_1^* \wedge b_1^* + \cdots + a_n^* \wedge b_n^*.
\end{gather*}
\end{Lem}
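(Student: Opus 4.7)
The plan is to reduce the lemma to the standard linear-algebraic fact that a transversal pair of Lagrangian subspaces of $(V,\Theta)$ admits an adapted symplectic basis, and then rescale a single conjugate pair to normalize $\omega_2$.

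First I would construct a symplectic basis adapted to the decomposition $V = V_1\oplus V_2$ furnished by the bi-decomposition of $\omega$. Pick any basis $\{a_1,\dots,a_n\}$ of $V_1$. Because $V_1$ and $V_2$ are transversal Lagrangian subspaces of $(V,\Theta)$, the bilinear pairing $V_1\times V_2\to\mathbf{R}$, $(u,v)\mapsto\Theta(u,v)$, is non-degenerate, so there is a unique basis $\{b_1,\dots,b_n\}$ of $V_2$ with $\Theta(a_i,b_j)=\delta_{ij}$. Combined with the Lagrangian conditions $\Theta|_{V_1}=0$ and $\Theta|_{V_2}=0$, this presents $\{a_1,\dots,a_n,b_1,\dots,b_n\}$ as a symplectic basis with $\Theta=\sum_{i=1}^n a_i^*\wedge b_i^*$ and with $V_1=\langle a_1,\dots,a_n\rangle$, $V_2=\langle b_1,\dots,b_n\rangle$.

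Next I would read off the shape of $\omega_1$ and $\omega_2$ in this basis. The condition $i_u\omega_1=0$ for every $u\in V_2=\langle b_1,\dots,b_n\rangle$ forces every monomial in the expansion of $\omega_1$ in the standard basis of $\wedge^n V^*$ to lie in $\wedge^n\langle a_1^*,\dots,a_n^*\rangle$; combined with $\omega_1|_{V_1}\neq 0$, this gives $\omega_1=\alpha\,a_1^*\wedge\cdots\wedge a_n^*$ for some non-zero constant $\alpha$. By the symmetric argument, $\omega_2=\beta\,b_1^*\wedge\cdots\wedge b_n^*$ with $\beta\neq 0$.

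Finally I would rescale the single conjugate pair by setting $\tilde{a}_1:=\beta a_1$ and $\tilde{b}_1:=\beta^{-1}b_1$ and leaving the other basis vectors unchanged. This is an elementary symplectic transformation of $(V,\Theta)$, so the new tuple is again a symplectic basis with $\Theta=\sum_{i=1}^n \tilde{a}_i^*\wedge\tilde{b}_i^*$. Since rescaling a basis vector by $\lambda$ rescales its dual by $\lambda^{-1}$, the factor $\beta$ in front of $\omega_2$ is absorbed while that in front of $\omega_1$ acquires an extra $\beta$, yielding $\omega_2=\tilde{b}_1^*\wedge\cdots\wedge\tilde{b}_n^*$ and $\omega_1=c\,\tilde{a}_1^*\wedge\cdots\wedge\tilde{a}_n^*$ with $c:=\alpha\beta$. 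The argument presents no real obstacle; it is a short exercise in symplectic linear algebra, requiring only careful tracking of how dual bases transform under rescaling.
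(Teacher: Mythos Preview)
Your proof is correct and follows essentially the same route as the paper: build a symplectic basis adapted to the transversal Lagrangian pair $V_1\oplus V_2$, read off $\omega_1=\alpha\,a_1^*\wedge\cdots\wedge a_n^*$ and $\omega_2=\beta\,b_1^*\wedge\cdots\wedge b_n^*$, and then rescale a single conjugate pair $(a_1,b_1)$ to absorb $\beta$ and set $c=\alpha\beta$. The only difference is cosmetic: you spell out the construction of the adapted symplectic basis via the non-degenerate pairing $V_1\times V_2\to\mathbf{R}$, whereas the paper simply asserts its existence; the normalization step is identical up to notation.
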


A form $\varphi \in \wedge^n V^*$ on
a symplectic vector space $(V,\Theta)$
is called {\it effective} if
the interior product $i_{X_{\Theta}}\varphi = 0$ for
the $2$-vector $i_{X_{\Theta}}$ corresponding to the
$2$-form~$\Theta$.
Note that
\begin{gather*}
X_{\Theta} = a_1 \wedge b_1 + \cdots + a_n \wedge b_n
\end{gather*}
in terms of the basis in Lemma \ref{symplectic basis}.
That $\varphi \in \wedge^n V^*$ is ef\/fective if and only if
the wedge $\varphi\wedge\Theta$ with the symplectic form~$\Theta$ is equal to zero~\cite{B2, L}.
Then we have, by Lemma~\ref{symplectic basis}:

\begin{Cor}
\label{effective}
If $\omega \in \wedge^n V^*$ is bi-decomposable and
$\omega = \omega_1 - \omega_2$ is any bi-decomposition, then~$\omega$,~$\omega_1$ and~$\omega_2$ are all effective.
\end{Cor}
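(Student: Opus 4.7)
The plan is to read the statement off directly from the normal form provided by Lemma~\ref{symplectic basis}, using the stated characterization that a form $\varphi \in \wedge^n V^*$ is effective exactly when $\varphi \wedge \Theta = 0$.

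First I would fix a bi-decomposition $\omega = \omega_1 - \omega_2$ and apply Lemma~\ref{symplectic basis} to obtain a symplectic basis $\{a_1,\dots,a_n,b_1,\dots,b_n\}$ of $(V,\Theta)$ and a constant $c \neq 0$ with
\begin{gather*}
\omega_1 = c\, a_1^* \wedge \cdots \wedge a_n^*, \qquad
\omega_2 = b_1^* \wedge \cdots \wedge b_n^*, \qquad
\Theta = \sum_{i=1}^n a_i^* \wedge b_i^*.
\end{gather*}
Then $\omega_1 \wedge \Theta$ is a sum of terms each containing some $a_i^*$ twice (once from $\omega_1$, once from the $a_i^* \wedge b_i^*$ summand of $\Theta$), so every term vanishes and $\omega_1 \wedge \Theta = 0$; by the same argument $\omega_2 \wedge \Theta = 0$. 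Hence both $\omega_1$ and $\omega_2$ are effective.

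Finally, effectiveness is linear in the form (since $\wedge$ distributes over subtraction), so $\omega \wedge \Theta = \omega_1 \wedge \Theta - \omega_2 \wedge \Theta = 0$, giving effectiveness of $\omega$ as well. There is no real obstacle here: the entire content sits in Lemma~\ref{symplectic basis}, which already provides a basis simultaneously adapted to $\omega_1$, $\omega_2$, and $\Theta$, and after that the verification is a one-line wedge computation using the repeated-factor principle.
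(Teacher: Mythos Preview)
Your argument is correct and is exactly the approach the paper intends: the corollary is stated as an immediate consequence of Lemma~\ref{symplectic basis}, and you have simply spelled out the one-line wedge computation that the paper leaves implicit. There is nothing to add.
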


We will apply the following basic result to our situation.

\begin{Lem}[\protect{\cite[Theorem~1.6]{L}}]\label{Lych}
Let $\omega$, $\omega'$ be effective $k$-forms on a symplectic vector space
$(V, \Theta)$, $(0\leq k\leq n)$.
Suppose that, for every isotropic subspace $L\subset V$ on which
$\omega\vert_L = 0$, the form $\omega'$ also vanishes on~$L$,
$\omega'\vert_L = 0$.
Then we have $\omega' =\mu{\omega}$ for some~$\mu \in {\mathbf{R}}$.
\end{Lem}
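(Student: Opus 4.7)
The plan is to combine the $Sp(V)$-module structure of the space of effective forms with a study of evaluation on isotropic subspaces. Recall the symplectic Lefschetz $\mathfrak{sl}_2$-action on $\wedge^\bullet V^*$ with raising operator $L\alpha = \Theta\wedge\alpha$ and lowering operator $\Lambda = i_{X_\Theta}$; the effective $k$-forms $\mathrm{Eff}^k$ coincide with the primitive vectors $\ker\Lambda\cap\wedge^k V^*$, and a standard fact in Hodge--Lepage theory is that for $0\leq k\leq n$ the space $\mathrm{Eff}^k$ is an irreducible $Sp(V)$-representation.

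As a first step I would establish an evaluation injectivity: the $Sp(V)$-equivariant linear map $\Psi\colon \mathrm{Eff}^k \to \Gamma(\mathrm{IsoGr}(k,V),\mathcal{L})$, $\omega\mapsto (L\mapsto \omega|_L)$, is injective, where $\mathrm{IsoGr}(k,V)$ denotes the Grassmannian of isotropic $k$-planes and $\mathcal{L}$ the top exterior power of the dual of its tautological subbundle. Since its kernel is an $Sp(V)$-subrepresentation, it is either zero or all of $\mathrm{Eff}^k$ by irreducibility; a Darboux basis $\{e_i,f_i\}$ of $V$ furnishes the effective form $e_1^*\wedge\cdots\wedge e_k^*$, which is nonzero on $\mathrm{span}(e_1,\dots,e_k)$, excluding the second possibility.

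To conclude, fix an isotropic $L_0$ with $\omega|_{L_0}\neq 0$, set $\mu := \omega'(L_0)/\omega(L_0)$ with respect to any chosen volume on $L_0$, and consider $\widetilde{\omega} := \omega'-\mu\omega\in\mathrm{Eff}^k$. Then $\widetilde{\omega}$ vanishes on $L_0$ and, by the hypothesis, on the entire zero set $Z(\omega) := \{L : \omega|_L = 0\}$. If one shows $\widetilde{\omega}|_L = 0$ on every isotropic $k$-subspace, the injectivity of $\Psi$ then forces $\widetilde{\omega} = 0$ and we obtain $\omega' = \mu\omega$.

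The main obstacle is precisely this final global vanishing. My plan here is to view the pair $(\omega,\omega')$ as defining a rational map $\mathrm{IsoGr}(k,V)\dashrightarrow \mathbf{P}^1$ and to leverage the set-theoretic inclusion $Z(\omega)\subseteq Z(\omega')$, together with effectiveness, to promote it to a scheme-theoretic comparison of orders of vanishing along the irreducible components of $Z(\omega)$. The ratio $\omega'/\omega$ would then extend as a regular function on the connected projective variety $\mathrm{IsoGr}(k,V)$ and be identically the constant $\mu$. Carrying this out amounts to a local Darboux-coordinate analysis that exploits the primitive structure encoded by $\Lambda\omega = 0$; this is where the substance of the argument lies, and where I expect the technical work to be concentrated.
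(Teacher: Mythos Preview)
The paper does not supply a proof of this lemma; it is quoted as Theorem~1.6 of Lychagin~\cite{L} and used as a black box. There is therefore no in-paper argument to compare your proposal against.

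On its own merits, your reduction is sound up to the point you yourself flag: once one knows that $\widetilde{\omega}=\omega'-\mu\omega$ vanishes on \emph{every} isotropic $k$-plane, the injectivity of $\Psi$ (via irreducibility of $\mathrm{Eff}^k$) finishes cleanly. But the passage from ``$\widetilde{\omega}|_{L_0}=0$ and $\widetilde{\omega}|_{Z(\omega)}=0$'' to ``$\widetilde{\omega}$ vanishes on the whole isotropic Grassmannian'' is the entire content of the lemma, and your plan does not close it. Two concrete obstacles: first, you are over~$\mathbf{R}$, so the slogan ``a regular function on a connected projective variety is constant'' does not apply; complexifying would require the inclusion $Z(\omega)\subseteq Z(\omega')$ for \emph{complex} isotropic planes, which the hypothesis does not give. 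Second, even in a complex setting, upgrading a set-theoretic inclusion of zero loci to a divisorial inequality of vanishing orders is precisely the hard step; you invoke ``effectiveness'' and ``the primitive structure $\Lambda\omega=0$'' as the tool but provide no mechanism by which $\Lambda\omega=0$ controls the multiplicity of $\Psi(\omega)$ along components of its zero divisor. As written, the proposal relocates the difficulty into an unexecuted ``local Darboux-coordinate analysis'' rather than resolving it.
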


Then we have the following:

\begin{Lem}
\label{bi-decomposable2}
Let $\omega$, ${\omega}'$ be bi-decomposable forms on $(V, \Theta)$.
Suppose that~${\omega}'$ is of the form
$\lambda \omega+\phi \wedge \Theta$
for a scalar $\lambda$ and an $(n-2)$-form $\phi$.
Then ${\omega}' = \mu\omega$ for a scalar~$\mu$.
\end{Lem}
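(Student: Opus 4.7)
The plan is to reduce the statement to an application of Lychagin's Lemma~\ref{Lych} by checking its hypotheses on isotropic subspaces.

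First, I would invoke Corollary~\ref{effective}: since both $\omega$ and $\omega'$ are assumed to be bi-decomposable $n$-forms on the symplectic vector space $(V,\Theta)$, each of them is effective in the sense of the preceding discussion. This delivers the standing hypothesis of Lemma~\ref{Lych} at the level of effectivity.

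The substantive step is to verify the implication required by Lemma~\ref{Lych}: whenever $L\subset V$ is an isotropic subspace with $\omega|_L=0$, we must show $\omega'|_L=0$. For this I would simply restrict the given relation
\begin{gather*}
\omega' \;=\; \lambda\omega + \phi\wedge\Theta
\end{gather*}
to $L$. Because pullback (restriction) commutes with the wedge, we have
\begin{gather*}
\omega'|_L \;=\; \lambda\,\omega|_L \;+\; \phi|_L\wedge\Theta|_L.
\end{gather*}
The first term vanishes by assumption on $L$, and the second vanishes because $L$ is isotropic, forcing $\Theta|_L=0$. Hence $\omega'|_L=0$, as desired.

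With both effectivity and the vanishing condition on isotropic subspaces in place, Lemma~\ref{Lych} applies verbatim and yields a scalar $\mu\in\mathbf{R}$ with $\omega'=\mu\omega$. I do not anticipate a genuine obstacle here: the proof is a direct assembly of Corollary~\ref{effective} (to secure effectivity of both sides) with the elementary fact that $\Theta$ annihilates any isotropic subspace. The only conceptual point is recognizing that the error term $\phi\wedge\Theta$ in the hypothesis is automatically invisible on isotropic $L$, which is exactly what makes Lychagin's criterion applicable.
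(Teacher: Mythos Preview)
Your proof is correct and follows essentially the same route as the paper: invoke Corollary~\ref{effective} for effectivity, observe that the term $\phi\wedge\Theta$ vanishes on any isotropic $L$, and conclude via Lemma~\ref{Lych}. The only cosmetic difference is that the paper phrases the vanishing check on Lagrangian subspaces rather than general isotropic ones, but the argument is identical.
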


\begin{proof}
By Corollary \ref{effective}, the $n$-form $\omega$ is ef\/fective
and so is the $n$-form~${\omega}'$.
For every Lagrangian subspace~$L$ $(\Theta\vert_L = 0)$ on which
$\omega\vert_L = 0$,
the form $\omega'$ also vanishes.
Therefore, by Lemma~\ref{Lych},
it follows that
$\omega' =\mu{\omega}$
for a scalar~$\mu$.
\end{proof}

\begin{Rem}
Lemma \ref{bi-decomposable2} can be shown, as an alternative proof, by
applying Lefschetz isomorphism $\Theta^2\colon
\wedge^{n-2} V \to \wedge^{n+2} V$. In fact, from ${\omega}' - \mu\omega = \phi \wedge \Theta$
we have $\phi\wedge \Theta^2 = 0$, which implies~$\phi = 0$.
\end{Rem}

\begin{proof}[Proof of Theorem \ref{bi-decomposable-form}]
Since $\omega'$ belongs to ${\mathcal M} = \langle \theta, d\theta, \omega
\rangle$,
we set $\omega' = \lambda\omega +  d\theta\wedge\phi + \theta\wedge \eta$,
for a~function~$\lambda$, an $(n-2)$-form $\phi$ and an $(n-1)$-form $\eta$
on~$M$.
For each $x \in M$, we have
$\omega'\vert_{D_x} = \lambda(x)\omega\vert_{D_x} +
\Theta\wedge\phi\vert_{D_x}$,
where $\Theta = d\theta\vert_{D_x}$.
Then, by Lemma~\ref{bi-decomposable2}, we have
$\omega'\vert_{D_x} = \mu(x)\omega\vert_{D_x}$ for a scalar~$\mu(x)$
depending on $x \in M$.
Since $\omega\vert_{(E_1)_x}$ is a volume form, we see that~$\mu(x)$
is unique and
of class $C^\infty$. Moreover there exists a $C^\infty$ $(n-1)$-form $\eta$
such that $\omega' - \mu\omega = \theta \wedge \eta$, which implies
the required consequence.
\end{proof}

Moreover we show the following basic result:
\begin{Th}\label{Uniqueness}
Assume that $n\geq 3$.
Let
$\mathcal{M}=
\langle \theta, d\theta, \omega \rangle$ be
a Monge--Amp{\`e}re system
with a~Lag\-rangian pair locally generated by a bi-decomposable
$n$-form~$\omega$. Then~$\mathcal{M}$
canonically determines the Lagrangian pair~$(E_1,E_2)$.
Namely, if $(\omega_1, \omega_2)$
$($resp.\ $(\omega'_1, \omega'_2))$
is a bi-decomposition of~$\omega$ for
a~Lag\-rangian pair $(E_1,E_2)$ $($resp.~$(E_1', E_2'))$
enjoying the bi-decomposing condition.
Then we have
\begin{gather*}
E_1'=E_1,\quad  E_2'=E_2, \qquad {\mathrm or} \qquad
E_1' = E_2, \quad  E_2' = E_1.
\end{gather*}
\end{Th}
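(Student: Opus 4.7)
The plan is to give a pointwise, intrinsic characterization of the subset $(E_1)_x \cup (E_2)_x \subset D_x$ using only the form $\omega$ and the conformal symplectic structure, so that the same subset must arise from any bi-decomposition. Fix $x \in M$ and set $V = D_x$, $\Theta = d\theta_x|_V$, and $V_i = (E_i)_x$; by Lemma~\ref{symplectic basis} I may work in a symplectic basis adapted to the chosen bi-decomposition, so that $\omega|_V = c\, a_1^*\wedge\cdots\wedge a_n^* - b_1^*\wedge\cdots\wedge b_n^*$. The splitting $V^* = V_1^* \oplus V_2^*$ induces a bidegree decomposition of $\wedge^{\bullet} V^*$ which will drive the main step. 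The key claim, from which the theorem follows, is that for $n \geq 3$,
\begin{gather*}
V_1 \cup V_2 = \{\, v \in V \,|\, i_v\omega \text{ is a decomposable }(n-1)\text{-form}\,\}.
\end{gather*}
Granted this, the right-hand side is intrinsic to $\omega|_V$ and does not depend on which bi-decomposition is chosen; the two Lagrangian summands $V_1,V_2$ can then be recovered as the two maximal linear subspaces of their union (since any linear subspace $W\subseteq V_1\cup V_2$ containing elements outside both $V_1$ and $V_2$ would, by adding them, produce a vector outside the union). This gives $\{V_1,V_2\}=\{V_1',V_2'\}$ pointwise and hence $\{E_1,E_2\}=\{E_1',E_2'\}$ globally up to ordering.

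The inclusion $\subset$ is direct: for $v\in V_1$ one has $i_v\omega_2=0$, since $\omega_2\in\wedge^n V_2^*$ and elements of $V_2^*$ annihilate $V_1$, so $i_v\omega = i_v\omega_1$ is the contraction of a decomposable $n$-form with a vector, which is itself decomposable; the case $v\in V_2$ is symmetric.

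The hard direction is $\supset$, and this is where I expect the main work. Writing $v=v_1+v_2$ with $v_i\in V_i$, if $v\notin V_1\cup V_2$ then both $v_i$ are non-zero and
\begin{gather*}
i_v\omega = i_{v_1}\omega_1 - i_{v_2}\omega_2 \in \bigl(\wedge^{n-1}V_1^*\bigr)\oplus\bigl(\wedge^{n-1}V_2^*\bigr) \subset \wedge^{n-1}V^*,
\end{gather*}
with both extremal bidegree components $(n-1,0)$ and $(0,n-1)$ non-zero and all intermediate bidegrees vanishing. It then remains to show that a decomposable $(n-1)$-form $\alpha = \alpha_1\wedge\cdots\wedge\alpha_{n-1}$ with only these two bidegrees non-zero must in fact be of pure bidegree. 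Decomposing $\alpha_i=\alpha_i^{(1)}+\alpha_i^{(2)}$ and computing the $(n-2,1)$ bidegree component yields $\sum_{j=1}^{n-1}(-1)^{j-1}\alpha_j^{(2)}\wedge\beta_j^{(1)}$ with $\beta_j^{(1)} := \alpha_1^{(1)}\wedge\cdots\wedge\widehat{\alpha_j^{(1)}}\wedge\cdots\wedge\alpha_{n-1}^{(1)}$. Non-vanishing of the $(n-1,0)$ component forces the $\alpha_i^{(1)}$ to be linearly independent, whence the $\beta_j^{(1)}$ are linearly independent in $\wedge^{n-2}V_1^*$; viewing the $(n-2,1)$ component as an element of $V_2^*\otimes\wedge^{n-2}V_1^*$, its vanishing therefore forces $\alpha_j^{(2)}=0$ for every $j$, killing the $(0,n-1)$ component---a contradiction. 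The hypothesis $n\geq 3$ enters exactly here, to guarantee the existence of the intermediate bidegree $(n-2,1)$ that supplies the constraint; when $n=2$, $i_v\omega$ is a $1$-form and is always decomposable, explaining the failure of uniqueness recorded in Remark~\ref{n=2}.
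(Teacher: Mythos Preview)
Your argument is correct and complete. It is, however, a genuinely different route from the one taken in the paper.

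The paper reduces Theorem~\ref{Uniqueness} to the pointwise Proposition~\ref{uniqueness 2}, and proves that proposition via projective geometry of the Pl\"ucker embedding ${\rm Gr}(n,V^*)\hookrightarrow P(\wedge^n V^*)$: Lemma~\ref{proj.geom} shows (using explicit Pl\"ucker relations) that for $n\geq 3$ a projective plane through two transverse decomposable $n$-covectors and a third one cannot meet the Grassmannian in a fourth point; since $\omega_1-\omega_2-\omega_1'=-\omega_2'$ places $[\omega_2']$ on the plane through $[\omega_1],[\omega_2],[\omega_1']$, this forces the coincidences. Your proof instead gives an intrinsic description of the set $V_1\cup V_2$ as exactly the locus $\{v:i_v\omega\text{ is decomposable}\}$, obtained by a bidegree analysis in $\wedge^{n-1}(V_1^*\oplus V_2^*)$, and then recovers the pair as the two maximal linear subspaces of that locus.

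Your approach is more elementary---no Pl\"ucker relations are needed---and it yields an explicit pointwise invariant (the ``decomposability locus'' of $i_v\omega$) that one could use elsewhere; it also makes transparent that the symplectic/Lagrangian hypothesis is not essential to the uniqueness, only the transversality $V_1\oplus V_2=V$ and $\dim V_i=n$. The paper's approach, on the other hand, situates the phenomenon in the classical geometry of Grassmannians and simultaneously pins down the forms $\omega_i$ themselves up to the swap, though your argument recovers that too, since once $V_1,V_2$ are known the $\omega_i|_V$ are just the extremal bidegree pieces of $\omega|_V$. Both proofs identify $n\geq 3$ as the threshold for the same underlying reason: an ``intermediate'' constraint (the $(n-2,1)$ bidegree for you, the extra Pl\"ucker relation for the paper) only exists then.
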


Theorem \ref{Uniqueness} follows immediately from the following:

\begin{Prop}
\label{uniqueness 2}
Assume that $n\geq 3$.
Let $(V,\Theta)$ be a $2n$-dimensional symplectic vector space.
Let $\omega=\omega_1-\omega_2$ be a bi-decomposable
$n$-form for a Lagrangian pair $(V_1, V_2)$ of $V$.
Then the bi-decomposition $(\omega_1, \omega_2)$ of~$\omega$
is unique up to ordering:
If $\omega=\omega'_1-\omega'_2$ is another bi-decomposition
of $\omega$ for another Lagrangian pair $(V'_1, V'_2)$ of~$V$, then
\begin{gather*}
V'_1 = V_1, \qquad V'_2 = V_2, \qquad \omega'_1 = \omega_1, \qquad \omega'_2 = \omega_2,
\qquad {\mathrm{or}} \\
V'_1 = V_2, \qquad V'_2 = V_1, \qquad \omega'_1 = - \omega_2, \qquad \omega'_2 = - \omega_1.
\end{gather*}
\end{Prop}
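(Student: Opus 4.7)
The strategy is to recover the unordered Lagrangian pair $\{V_1,V_2\}$ intrinsically from $\omega$ alone; once this is done, the forms $\omega_1,\omega_2$ are pinned down by a short linear-algebra comparison. The intrinsic recipe I propose is
\begin{gather*}
V_1\cup V_2 \;=\; \{\,u\in V : i_u\omega\in\wedge^{n-1}V^*\ \text{is decomposable}\,\}.
\end{gather*}

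The inclusion ``$\subseteq$'' is immediate from the bi-decomposing condition: if $u\in V_j$ then $i_u\omega_{3-j}=0$, so $i_u\omega=\pm\,i_u\omega_j$ is the interior product of a decomposable $n$-form by a vector, and therefore decomposable. For ``$\supseteq$'' I would apply Lemma~\ref{symplectic basis} to write $V^*=W_1\oplus W_2$ with $W_j$ the annihilator of $V_{3-j}$, so that $\omega_j\in\wedge^n W_j$. Given $u=u_1+u_2$ with both $u_j\in V_j$ nonzero, set $\eta_j=i_{u_j}\omega_j\in\wedge^{n-1}W_j$, a nonzero decomposable $(n-1)$-form. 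The crux of the argument is the identity
\begin{gather*}
i_v i_u\omega \;=\; i_{v_1}\eta_1 - i_{v_2}\eta_2 \;\in\; \wedge^{n-2}W_1 \oplus \wedge^{n-2}W_2, \qquad v=v_1+v_2.
\end{gather*}
When $n-2\geq 1$ the two summands lie in distinct direct-sum pieces of $\wedge^{n-2}V^*$, so they must vanish separately; this is the unique place where the hypothesis $n\geq 3$ is used. A short computation then gives $\ker_{V_j}\eta_j=\langle u_j\rangle$, and hence
\begin{gather*}
N(i_u\omega) \;=\; \{\,v\in V : i_v i_u\omega=0\,\} \;=\; \langle u_1\rangle\oplus\langle u_2\rangle,
\end{gather*}
of codimension $2n-2$. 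Since a nonzero $k$-form $\eta$ is decomposable if and only if $\mathrm{codim}\,N(\eta)=k$, and $2n-2>n-1$, the form $i_u\omega$ is not decomposable, completing the characterization.

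With the characterization in hand, $V_1\cup V_2=V_1'\cup V_2'$ for any two bi-decompositions of $\omega$. Since a real vector space is never a union of two proper subspaces, the identity $V_1'=(V_1'\cap V_1)\cup(V_1'\cap V_2)$ forces $V_1'\subseteq V_1$ or $V_1'\subseteq V_2$, and equality of dimensions then yields $\{V_1',V_2'\}=\{V_1,V_2\}$. In the unswapped case $(V_j'=V_j)$, both $\omega_1$ and $\omega_1'$ lie in the one-dimensional space $\wedge^n W_1$, and their restrictions to $V_1$ both equal $\omega|_{V_1}$ (using $\omega_2|_{V_1}=0=\omega_2'|_{V_1}$ from the bi-decomposing condition), so $\omega_1'=\omega_1$ and hence $\omega_2'=\omega_2$. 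In the swapped case $V_1'=V_2$, the symmetric computation on $V_2$ gives $\omega_1'=-\omega_2$ and $\omega_2'=-\omega_1$, as stated. The main obstacle is the rank computation in the middle paragraph; the failure of the bidegree separation at the level of $0$-forms (scalars) is precisely what breaks the argument when $n=2$, consistent with Remark~\ref{n=2}.
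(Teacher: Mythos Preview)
Your argument is correct and takes a genuinely different route from the paper. The paper proves Proposition~\ref{uniqueness 2} via projective geometry of the Pl\"ucker embedding: it establishes (Lemma~\ref{proj.geom}) that for $n\geq 3$ a projective plane through three distinct points of $\mathrm{Gr}(n,V^*)\subset P(\wedge^n V^*)$, two of which correspond to transverse $n$-planes, cannot meet the Grassmannian in a fourth point, and then applies this to the four decomposable forms $\omega_1,\omega_2,\omega_1',\omega_2'$ sitting in a single projective plane by virtue of the relation $\omega_1-\omega_2-\omega_1'=-\omega_2'$. Your approach instead gives an intrinsic pointwise characterization $V_1\cup V_2=\{u:i_u\omega\text{ is decomposable}\}$ via a rank computation for $i_u\omega$, exploiting the bigrading $\wedge^{n-2}V^*=\bigoplus_{p+q=n-2}\wedge^pW_1\otimes\wedge^qW_2$. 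Both arguments are pure linear algebra and make no essential use of the symplectic form beyond the transversality $V=V_1\oplus V_2$; both isolate $n\geq 3$ at a single clean step (yours: the pieces $(n-2,0)$ and $(0,n-2)$ are distinct; theirs: Lemma~\ref{proj.geom}(2) fails for $n=2$ since $\mathrm{Gr}(2,\mathbf{R}^4)$ is a hypersurface). Your characterization has the minor advantage of being reusable---it says something about \emph{every} vector $u$, not just about the four given forms---while the paper's Pl\"ucker argument connects the result to classical incidence geometry and makes the failure at $n=2$ geometrically transparent.
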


The bi-decomposition of $\omega$ (Proposition~\ref{uniqueness 2})
is given by using the symplectic structure,
based on Hitchin's result \cite[Propositions 2.1,~2.2]{H},
in the case that $n$ is odd and $n \geq 3$, as follows:

Let $\omega = \omega_1 - \omega_2$, with
$\omega_1 = c a_1^*\wedge \cdots \wedge a_n^*$
and
$\omega_2 = b_1^*\wedge \cdots \wedge b_n^*$, as in
Lemma~\ref{symplectic basis}.
Let $\epsilon=a^*_1\wedge \cdots \wedge a^*_n
\wedge b^*_1\wedge \cdots b^*_n$
be the associated basis vector for $\Lambda^{2n}V^*$,
which is the
intrinsically def\/ined volume form by the symplectic structure on~$V$.
From the isomorphism $A\colon \Lambda^{2n-1}V^* \longrightarrow
V\otimes \wedge^{2n}V^*$ induced by the exterior product
$V^*\otimes\wedge^{2n-1}V^* \to \wedge^{2n}V^*$,
we def\/ine, for each $\psi \in \wedge^nV^*$,
a linear transformation
$K_{\psi}=K_{\psi}^{\epsilon}\colon V \longrightarrow V$ by
$K_{\psi}(u)\epsilon = A(i_u(\psi)\wedge \psi)$, and put
$\lambda(\psi)=\lambda_{\epsilon}(\psi)=
\frac{1}{2n}\operatorname{tr}(K_{\psi}^2)$.

In particular, we set $\psi = \omega$.
Then we have $K_{\omega}a_i=-c a_i$, $K_{\omega}b_i
= (-1)^{n+1} c b_i = cb_i$.
Then we have $K_{\omega}^2 = c^2\operatorname{id}$,
therefore $\lambda(\omega) = c^2$.
Since $K_{\omega}^*a_i^* = -ca_i^*$, $K_{\omega}^*b_i^* =cb_i^* $,
we have
\begin{gather*}
K_{\omega}^*\omega = c(-c)^n a^*_1\wedge \cdots \wedge a^*_n -
c^{n}b^*_1\wedge \cdots \wedge b^*_n =
(-c)^n(\omega_1 + \omega_2).
\end{gather*}
Thus, from
$\omega={\omega}_1-{\omega}_2$, $-\frac{1}{c^n}K_{\omega}^*\omega={\omega}_1+{\omega}_2$
and $\lambda(\omega) = c^2$ $(c >0, <0)$,
we see
\begin{gather*}
\omega_1 = \frac{1}{2}\big(\omega \mp \lambda(\omega)^{-\frac{n}{2}}
K_{\omega}^*\omega\big),\qquad
\omega_2= \frac{1}{2}\big({-}\omega \mp \lambda(\omega)^{-\frac{n}{2}}
K_{\omega}^*\omega\big).
\end{gather*}
Since $K_{\omega}^*\omega$ and $\lambda(\omega)$ are
intrinsically determined from the symplectic structure on~$V$,
so is the decomposition of~$\omega$.

To verify Proposition~\ref{uniqueness 2} in general case,
we observe a fact from projective geometry of Pl\"{u}cker embeddings.
Consider Grassmannian ${\rm Gr}(n, V^*)$ consisting of all
$n$-dimensional subspaces
in the $2n$-dimensional vector space $V^*$, and its
Pl\"{u}cker embedding ${\rm Gr}(n, V^*) \hookrightarrow P(\wedge^{n} V^*)$.

\begin{Lem}
\label{proj.geom}
Let $\Lambda_1, \Lambda_2 \in {\rm Gr}(n, V^*)$ with $\Lambda_1 \cap \Lambda_2 = \{ 0\}$.
\begin{enumerate}\itemsep=0pt
\item[$(1)$] The projective line in $P(\wedge^{n} V^*)$
through the two points~$\Lambda_1$,~$\Lambda_2$ does not intersect
with ${\rm Gr}(n, V^*)$ at other points.

\item[$(2)$] Assume $n \geq 3$. Let $\Lambda_3$ be another point in~${\rm Gr}(n, V^*)$
different from $\Lambda_1$, $\Lambda_2$. Then the projective plane
in $P(\wedge^{n} V^*)$ through the three points $\Lambda_1$, $\Lambda_2$,
$\Lambda_3$
does not intersect with ${\rm Gr}(n, V^*)$ at other points.
\end{enumerate}
\end{Lem}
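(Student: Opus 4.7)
The plan is to dispatch the two parts in turn, using the classical decomposability criterion that $\omega\in\wedge^{n}V^{*}$ is decomposable if and only if $i_{v}\omega\wedge\omega=0$ for every $v\in V$, together with the bigrading on $\wedge^{\bullet}V^{*}$ induced by the direct sum splitting $V^{*}=\Lambda_{1}\oplus\Lambda_{2}$.

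For part (1), fix bases $\{a_{1}^{*},\dots,a_{n}^{*}\}$ of $\Lambda_{1}$ and $\{b_{1}^{*},\dots,b_{n}^{*}\}$ of $\Lambda_{2}$, which together form a basis of $V^{*}$ by the transversality hypothesis. Normalize $\omega_{1}=a_{1}^{*}\wedge\cdots\wedge a_{n}^{*}$ and $\omega_{2}=b_{1}^{*}\wedge\cdots\wedge b_{n}^{*}$, and write a point on the projective line as $\omega=s\omega_{1}+t\omega_{2}$ with $(s,t)\ne(0,0)$. A short calculation gives
\begin{gather*}
i_{a_{1}}(\omega)\wedge\omega = \pm\,st\cdot a_{2}^{*}\wedge\cdots\wedge a_{n}^{*}\wedge b_{1}^{*}\wedge\cdots\wedge b_{n}^{*},
\end{gather*}
a non-zero scalar multiple of a basis vector of $\wedge^{2n-1}V^{*}$. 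Hence decomposability forces $st=0$, so $\omega$ is proportional either to $\omega_{1}$ or to $\omega_{2}$.

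For part (2), I argue by contradiction: suppose a fourth Grassmannian point $\Lambda_{4}\ne\Lambda_{1},\Lambda_{2},\Lambda_{3}$ lies on the projective plane, with $\omega_{4}=s_{1}\omega_{1}+s_{2}\omega_{2}+s_{3}\omega_{3}$. Part (1) applied to the side joining $\Lambda_{1}$ and $\Lambda_{2}$ forces $s_{3}\ne0$, so I normalize $s_{3}=1$. Exploit the bigrading
\begin{gather*}
\wedge^{n}V^{*} = \bigoplus_{k=0}^{n}\wedge^{k}\Lambda_{1}\otimes\wedge^{n-k}\Lambda_{2},
\end{gather*}
in which $\omega_{1},\omega_{2}$ occupy the extremal pieces while $\omega_{3}=\sum_{k}\omega_{3}^{(k)}$, and similarly for $\omega_{4}$. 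Matching bidegrees yields $\omega_{4}^{(k)}=\omega_{3}^{(k)}$ for every intermediate $1\le k\le n-1$, while $\omega_{4}^{(n)}=s_{1}\omega_{1}+\omega_{3}^{(n)}$ and $\omega_{4}^{(0)}=s_{2}\omega_{2}+\omega_{3}^{(0)}$. In the generic sub-case where $\Lambda_{3}$ is transversal to both $\Lambda_{1}$ and $\Lambda_{2}$, choose adapted bases so that $\Lambda_{3}$ becomes the graph of the identity $\mathrm{id}:\Lambda_{1}\to\Lambda_{2}$; the extremal conditions $(s_{1}+1)(s_{2}+1)\ne0$ then force the same transversality for $\Lambda_{4}$, exhibiting it as the graph of some $\phi:\Lambda_{1}\to\Lambda_{2}$. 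The match in bidegree $(n-1)$ pins the off-diagonal entries of $\phi$ to zero and each diagonal entry to the common value $1/(s_{1}+1)$; then the match in bidegree $(n-2)$, which exists precisely because $n\ge3$, gives $s_{1}=0$ and hence $\phi=\mathrm{id}$, so $\Lambda_{4}=\Lambda_{3}$, contradicting the hypothesis.

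The chief obstacle is the exhaustive case analysis in the degenerate sub-cases where $\Lambda_{3}$ or $\Lambda_{4}$ fails to be transversal to $\Lambda_{1}$ or to $\Lambda_{2}$: then one of the extremal components of $\omega_{3}$ or $\omega_{4}$ vanishes, and one must first factor out the non-trivial piece in $\wedge^{\bullet}(\Lambda_{3}\cap\Lambda_{i})$ and then rerun the bidegree-matching argument in a smaller exterior algebra. The hypothesis $n\ge3$ is essential throughout, since only then does the crucial bidegree $k=n-2$ lie in the intermediate range; for $n=2$ there is a single intermediate bidegree yielding only a quadratic constraint, in harmony with the classical fact that $\mathrm{Gr}(2,4)$ is a quadric hypersurface in $P^{5}$ whose intersection with a projective plane can be an entire conic.
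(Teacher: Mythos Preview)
Your argument for part~(1) is correct and is essentially equivalent to the paper's, which uses a single Pl\"ucker relation to obtain $\lambda\mu=0$; your decomposability criterion $i_{v}\omega\wedge\omega=0$ is just the Pl\"ucker relations repackaged, so there is no real difference here.

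For part~(2) your bigrading argument in the generic sub-case (where $\Lambda_{3}$ and $\Lambda_{4}$ are both transversal to $\Lambda_{1}$ and $\Lambda_{2}$) is correct and pleasant: the bidegree-$(n-1)$ match forces $\phi=\tfrac{1}{s_{1}+1}\mathrm{id}$, and the bidegree-$(n-2)$ match, available only for $n\ge 3$, then gives $s_{1}=0$. But you yourself flag the genuine gap: the degenerate sub-cases where some $\Lambda_{3}\cap\Lambda_{i}$ or $\Lambda_{4}\cap\Lambda_{i}$ is non-trivial are not carried out, only gestured at. The suggestion to ``factor out $\wedge^{\bullet}(\Lambda_{3}\cap\Lambda_{i})$ and rerun in a smaller exterior algebra'' is not obviously sound, since after factoring you lose the clean splitting $V^{*}=\Lambda_{1}\oplus\Lambda_{2}$ that drives the bigrading, and you may also drop below the threshold $n\ge 3$ in the reduced problem. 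Moreover, even within your generic-$\Lambda_{3}$ case you silently assume $(s_{1}+1)(s_{2}+1)\ne 0$; the boundary cases $s_{1}=-1$ or $s_{2}=-1$ (which make $\Lambda_{4}$ non-transversal) must be treated separately and are swept into the same unproven paragraph.

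The paper sidesteps all of this case analysis by working directly with Pl\"ucker coordinates. Writing $\bar p$ for the coordinates of $\Lambda_{3}$ and $p$ for those of a putative fourth point, one has $p_{I}=\bar p_{I}$ for every index set $I\ne\{1,\dots,n\},\{n+1,\dots,2n\}$, while $p_{1,\dots,n}=\lambda+\bar p_{1,\dots,n}$ and $p_{n+1,\dots,2n}=\mu+\bar p_{n+1,\dots,2n}$ with $\lambda\mu\ne 0$. For any intermediate $I$ one can (because $n\ge 3$) find two indices of $I$ on the same side of the partition $\{1,\dots,n\}\sqcup\{n+1,\dots,2n\}$; a single Pl\"ucker relation built from this pair then kills $\bar p_{I}$ outright. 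Thus $\Lambda_{3}$ is forced onto the line through $\Lambda_{1},\Lambda_{2}$, contradicting part~(1). This argument is uniform in $\Lambda_{3}$ and requires no transversality hypotheses, which is exactly what your approach is missing.
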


\begin{proof}
Choose a basis $e_1, \dots, e_n$ of $\Lambda_1$ and
$e_{n+1}, \dots, e_{2n}$ of $\Lambda_2$ to form
a basis of~$V^*$. Let $( p_{i_1, i_2, \dots, i_n})$
denote Pl\"{u}cker coordinates of the Pl\"{u}cker embedding. Here
$i_1, i_2, \dots, i_n$ are $n$-tuple chosen from $\{ 1, 2, \dots, 2n\}$.
For any sequences $1 \leq j_1 < j_2 < \cdots < j_{n-1} \leq 2n$ of
$(n-1)$-letters and
$1 \leq k_1 < k_2 < \cdots < k_{n+1} \leq 2n$ of $(n+1)$-letters,
we have the Pl\"{u}cker relation
\begin{gather*}
\sum_{\ell = 1}^{n+1} (-1)^\ell   p_{j_1, j_2, \dots, j_{n-1}, k_{\ell}}
p_{k_1, k_2,
\dots, \breve{k}_\ell, \dots, k_{n+1}} = 0
\end{gather*}
(see \cite{G-K-Z,G-H} for instance).
To see (1),  take a point $W  \in {\rm Gr}(n, V^*)$
on the projective line through~$\Lambda_1$,~$\Lambda_2$.
Since Pl\"{u}cker coordinates of $\Lambda_1$ (resp.\ $\Lambda_2$)
are given by $p_{1,\dots,n} = 1$ (resp.\ $p_{n+1, \dots, 2n} = 1$)
with zero other coordinates, we have that Pl\"{u}cker coordinates of~$W$
are given by $p_{1,\dots,n} = \lambda$, $p_{n+1, \dots, 2n} = \mu$
for some~$\lambda$,~$\mu$, other coordinates $p_{i_1, \dots, i_n}$ being
null.
Then, by applying the Pl\"{u}cker relation for $(j_1, \dots, j_{n-1}) =
(1, 2, \dots, n-1)$ and $(k_1, k_2, \dots, k_{n+1}) = (n-1, n, \dots, 2n)$,
we have $\lambda\mu = 0$. Therefore $\lambda = 0$ or $\mu = 0$, namely,
$W = \Lambda_1$ or $W = \Lambda_2$.

To see (2), let $\bar{p}=(\bar{p}_{i_1, i_2, \dots, i_n})$
be coordinates of~$\Lambda_3$.
Then the coordinates $p=(p_{i_1, i_2, \dots, i_n})$
for a point~$W$ on the plane
through $\Lambda_1$, $\Lambda_2$, $\Lambda_3$ are given by
\begin{gather*}
p_{1,\dots,n} = \lambda + \bar{p}_{1,\dots,n}, \qquad
p_{n+1, \dots, 2n} = \mu + \bar{p}_{n+1, \dots, 2n}, \qquad
p_{i_1, \dots, i_n} = \bar{p}_{i_1, \dots, i_n},
\end{gather*}
$\{i_1, \dots, i_n\} \not= \{ 1, \dots, n\}, \{ n+1, \dots, 2n\}$.
Suppose $W \in {\rm Gr}(n, V^*)$, $W \not= \Lambda_1, \Lambda_2, \Lambda_3$.
Then we see that
both~$p$ and~$\bar{p}$ satisfy
the Pl\"{u}cker relations and that $\lambda\mu\not= 0$.
Let $\{ i_1, \dots, i_n \} \not= \{ 1, \dots, n\}, \{ n+1, \dots, 2n\}$.
Then, because $n \geq 3$,  there exist $\ell$, $\ell'$, $\ell \not= \ell'$,
such that $i_\ell \not\in \{ n+1, \dots, 2n\}$ and $i_{\ell'}
\not\in \{ n+1, \dots, 2n\}$ or that $i_\ell \not\in \{ 1, \dots, n\}$
and $i_{\ell'} \not\in \{ 1, \dots, n\}$. In the former case,
we take $(i_1, \dots, \breve{i}_k, \dots, i_n)$,  $(i_k, n+1, \dots, 2n)$,
and write the Pl\"{u}cker relation to get $\mu\cdot\bar{p}_{i_1, \dots, i_n}
= 0$.
In the latter case, we take $(i_1, \dots, \breve{i}_k, \dots, i_n),
(1, \dots, n, i_k)$ to get $\lambda\cdot\bar{p}_{i_1, \dots, i_n} = 0$.
Therefore we obtain that $\bar{p}_{i_1, \dots, i_n} = 0$
for $\{ i_1, \dots, i_n \} \not= \{ 1, \dots, n\}, \{ n+1, \dots, 2n\}$.
This means that $\Lambda_3$
lies on the projective line through~$\Lambda_1$,~$\Lambda_2$,
which contradicts~(1).
\end{proof}

\begin{proof}[Proof of Proposition \ref{uniqueness 2}]
Set $\omega_1 - \omega_2 = \omega'_1 - \omega'_2$. Then
$\omega_1 - \omega_2 - \omega'_1$ is equal to a decomposable form
$- \omega'_2$.
Let $\Lambda_1$, $\Lambda_2$, $\Lambda'_1$, $\Lambda'_2$ be
points in ${\rm Gr}(n, V^*) \subset P(\wedge^n V^*)$
corresponding to $\omega_1$, $\omega_2$, $\omega'_1$, $\omega'_2$ respectively.
Then the projective plane through $\Lambda_1$, $\Lambda_2$, $\Lambda'_1$
intersects with~${\rm Gr}(n, V^*)$ also at~$\Lambda'_2$.
Note that $\Lambda'_1 \not= \Lambda'_2$ as well as
$\Lambda_1 \not= \Lambda_2$.
Suppose $\Lambda'_1 \not= \Lambda_1, \Lambda_2$.
By Lemma~\ref{proj.geom}(2), we have
$\Lambda'_2 = \Lambda_1$ or $\Lambda'_2 = \Lambda_2$.
Then we have that~$\Lambda'_1$ lies on the line through $\Lambda_1$, $\Lambda_2$.
By Lemma~\ref{proj.geom}(1), we conclude that
$\Lambda'_1 = \Lambda_1$ or $\Lambda'_1 = \Lambda_2$.
If $\omega'_1 = \lambda\omega_1$ for some $\lambda \not= 0$,
then $V'_2 = V_2$ as the annihilator of $\omega_1$ or $\omega'_1$.
If $\omega'_1 = \mu\omega_2$ for some $\mu \not= 0$,
then we have $V'_2 = V_1$.
By the symmetric argument, we obtain also that $V'_1 = V_1$ or $V'_1 = V_2$.
Thus we have $V_1' = V_1$, $V_2' = V_2$ or
$V_1' = V_2$,  $V_2' = V_1$.
Assume $V_1' = V_1$, $V_2' = V_2$, then,
restricting $\omega$ to $V = V_1 \oplus V_2$, we have
$\omega'_1 = \omega_1$, $\omega'_2 = \omega_2$.
Assume $V_1' = V_2$,  $V_2' = V_1$,
then we have $\omega'_1 = -\omega_2$, $\omega'_2 = - \omega_1$.
\end{proof}

\begin{Rem}
\label{n=2}
If $n=2$, that is, $M$ is of dimension~$5$,
then Theorem~\ref{Uniqueness} does not hold.
In fact, consider~$M={\mathbf{R}}^5$ with coordinates $(x,y,z,p,q)$
and with the contact form $\theta=dz-pdx-qdy$.
Take a $2$-form
\begin{gather*}
\omega=dx\wedge dy-dp\wedge dq.
\end{gather*}
Then decomposable $2$-forms $\omega_1 = dx\wedge dy$, $\omega_2=dp\wedge dq$
satisfy $\omega = \omega_1 - \omega_2$ and the bi-decomposing condition
for the Lagrangian pair given by
\begin{gather*}
E_1= \big\{ v \in T{\mathbf{R}}^5 \,|\, \theta(v) = dp(v) = dq(v) = 0 \big\}
= \left\langle \frac{\partial}{\partial x}+
p\frac{\partial}{\partial z},\frac{\partial}{\partial y}+
q\frac{\partial}{\partial z}\right\rangle,
\\
E_2= \big\{ u \in T{\mathbf{R}}^5 \,|\,
\theta(u) = dx(u) = dy(u) = 0\big\} = \left\langle \frac{\partial}{\partial p},
\frac{\partial}{\partial q}\right\rangle.
\end{gather*}
Then we can f\/ind other decomposable $2$-forms $\omega'_1=
d(x+p)\wedge dy$, $\omega'_2=dp\wedge d(y+q)$
satisfying $\omega=\omega'_1-\omega'_2$
and the bi-decomposing condition
for another Lagrangian pair given by
\begin{gather*}
E'_1  = \big\{ v \in T{\mathbf{R}}^5 \,|\, \theta(v) = dp(v) = d(y+q)(v) = 0\big\} =
\left\langle \frac{\partial}{\partial x}+p\frac{\partial}{\partial z},
\frac{\partial}{\partial y}+q\frac{\partial}{\partial z}-
\frac{\partial}{\partial q}\right\rangle,\\
E'_2  = \big\{ u \in T{\mathbf{R}}^5 \,|\, \theta(u) = d(x+p)(u) = dy(u) = 0 \big\}
= \left\langle \frac{\partial}{\partial x}+p\frac{\partial}{\partial z}-
\frac{\partial}{\partial p},\frac{\partial}{\partial q}\right\rangle.
\end{gather*}

Lemma~\ref{proj.geom}(2) does not hold in the case $n = 2$,
because ${\rm Gr}(2, {\mathbf{R}}^4) \hookrightarrow P(\wedge^2({\mathbf{R}}^4)) = P^5$
is a~hypersurface and a~projective plane intersects with~${\rm Gr}(2, {\mathbf{R}}^4)$
in inf\/inite points (a~planer curve).
\end{Rem}

From the above-mentioned propositions,
it follows that, in the case $n \geq 3$,
the notion of a bi-decomposable Monge--Amp{\`e}re system with a Lagrangian pair $(E_1,E_2)$
is nothing but
the notion of
a Monge--Amp{\`e}re system generated by a bi-decomposable
form $\omega=\omega_1-\omega_2$.

\section{Lagrangian contact structures}
\label{Lagrangian contact structures.}

Let us recall Takeuchi's paper \cite{Tak} for Lagrangian contact structures.
A contact structure with a Lagrangian pair is called a~Lagrangian contact structures in~\cite{Tak}.
A typical example of Lagrangian contact structures
is the projective cotangent vector bundle
$M = P(T^*W)$ of an $(n+1)$-dimensional manifold~$W$
with an af\/f\/ine structure (a~torsion-free linear connection) or
a~projective structure
(a~projective equivalence class of torsion-free linear connections).
For the canonical contact structure~$D$ on~$M$,
we take as a Lagrangian pair horizontal and vertical vector bundles
(cf.\ Example~\ref{hor+ver}).
In~\cite{Tak}, it is given the description of Cartan connections on~$P(T^*W)$
associated to Lagrangian contact structures and the equivalence of the vanishing
of the curvature of Cartan connection and the projective f\/latness of $W$.

The f\/lat model, which is a homogeneous space qualif\/ied as a model
for a Cartan connection, with Lagrangian contact structure is
the projective cotangent bundle $P(T^*P^{n+1})$
of the ($n+1$)-dimensional projective space $P^{n+1}=P(\mathbf{R}^{n+2})$.

Put $G={\rm PGL}(n+2,{\mathbf{R}})={\rm GL}(n+2,{\mathbf{R}})/C$ ($C$ is the center,
$C=\mathbf{R}^{\times}\cdot I_{n+2}$),
and $\mathfrak{g}=\operatorname{Lie} G \cong \mathfrak{sl}(n+2,{\mathbf{R}})$.
The Lie algebra $\mathfrak{g}$ has a structure of
a simple graded Lie algebra (GLA) of second kind as follows:
\begin{gather*}
       {\mathfrak g} = {\mathfrak{sl}}(n+2,{\mathbf{R}})  = {\mathfrak g}_{-2}\oplus {\mathfrak g}_{-1} \oplus
                    {\mathfrak g}_0
                    \oplus {\mathfrak g}_1 \oplus {\mathfrak g}_2 \\
\hphantom{{\mathfrak g}}{}
= \left\{{\begin{pmatrix}
                       0 & 0 & 0 \\
                       0 & \mathrm{O}_n & 0 \\
                       a & 0 & 0
                    \end{pmatrix}}\right\}
                       \oplus
                    \left\{{\begin{pmatrix}
                       0 & 0 & 0 \\
                       b_1 & \mathrm{O}_n & 0 \\
                       0 & {}^tb_2 & 0
                    \end{pmatrix}}\right\}
                       \oplus
                    \left\{{\begin{pmatrix}
                       \alpha & 0 & 0 \\
                       0 & A & 0 \\
                       0 & 0 &\beta
                    \end{pmatrix}}\right\} \\
\hphantom{{\mathfrak g}=}{} \quad {}\oplus
                    \left\{{\begin{pmatrix}
                       0 & {}^tc_1 & 0 \\
                       0 & \mathrm{O}_n & c_2 \\
                       0 & 0 & 0
                    \end{pmatrix}}\right\}
                       \oplus
                    \left\{{\begin{pmatrix}
                       0 & 0 & d \\
                       0 & \mathrm{O}_n & 0 \\
                       0 & 0 & 0
                     \end{pmatrix}}\right\}, \\
  (a,d,\alpha,\beta \in {\mathbf R},\,  b_1,b_2,c_1,c_2\in
                    {\mathbf R}^n, \, A\in \mathfrak{gl}(n,{\mathbf{R}}); \,
                    \alpha +\beta +\operatorname{tr}A=0),
                  \qquad [\mathfrak{g}_p,\mathfrak{g}_q] \subset
                    \mathfrak{g}_{p+q}.
\end{gather*}
Put
\begin{gather*}
\mathfrak{m}= \mathfrak{g}_{-2}\oplus \mathfrak{g}_{-1},
\end{gather*}
then $\mathfrak{m}$ is a fundamental GLA of contact type, i.e., Heisenberg
algebra.
Put
\begin{gather*}
\mathfrak{g}'=\mathfrak{g}_0 \oplus \mathfrak{g}_1 \oplus
\mathfrak{g}_2.
\end{gather*}
Let $G'$ be the Lie subgroup of $G = {\rm PGL}(n+2, {\mathbf{R}})$ def\/ined by
\begin{gather*}
G' = P\left\{
\begin{pmatrix}
* & * & * \\
0 & * & * \\
0 & 0 & *
\end{pmatrix}
\in
{\rm GL}(n+2, {\mathbf{R}})
\right\}.
\end{gather*}
Then the Lie algebra of $G'$ is given by $\mathfrak{g}'$. Note that
$\dim \mathfrak{g}' = n^2 + 2n + 2$.

The group $G$ transitively acts on the f\/lag manifold
\begin{gather*}
P\big(T^*P^{n+1}\big) = \big\{ V_1 \subset V_{n+1} \subset {\mathbf{R}}^{n+2} \,|\,
\dim V_1 = 1, \dim V_{n+1} = n + 1 \big\} \subset P^{n+1}\times P^{n+1*}.
\end{gather*}
Then $G'$ is the isotropy group of $(V_1, V_{n+1}) = (\langle e_0 \rangle,
\langle e_0, \dots, e_n \rangle)$ for the standard basis $e_0, e_1, \dots$,
$e_n, e_{n+1}$ of ${\mathbf{R}}^{n+1}$.
Therefore we have
\begin{gather*}
G/G' \cong P\big(T^*P^{n+1}\big) \qquad \big(\cong P\big(T^*P^{n+1*}\big)\big).
\end{gather*}
Note that $\mathfrak{g}_{-1} \subset \mathfrak{m} = T_o(G/G')$, where
$o = G'$ is the origin,
def\/ines the contact structure $D$ on $G/G'$ which corresponds to
the canonical contact structure
on $P(T^*P^{n+1})$ via the above dif\/feomorphism (cf.~\cite{I-M}).

Next, we consider
\begin{gather*}  \mathfrak{e}^1=\left\{{\begin{pmatrix}
                      0 & 0 & 0 \\
                      b_1 & \mathrm{O}_n & 0 \\
                      0 & 0 & 0
                    \end{pmatrix}}\right\}, \;
  \mathfrak{e}^2=\left\{{\begin{pmatrix}
                      0 & 0 & 0 \\
                      0 & \mathrm{O}_n & 0 \\
                      0 & {}^tb_2 & 0
                     \end{pmatrix}}\right\}.
\end{gather*}
Then we have
\begin{gather*}
\mathfrak{g}_{-1}=\mathfrak{e}^1\oplus \mathfrak{e}^2,\qquad
[\mathfrak{e}^1,\mathfrak{e}^1]=[\mathfrak{e}^2,\mathfrak{e}^2]=0,\qquad
\mathfrak{g}_{-2}=[\mathfrak{e}^1,\mathfrak{e}^2],
\\
[\mathfrak{g}_0,\mathfrak{e}^1]
       \subset \mathfrak{e}^1,\qquad
       [\mathfrak{g}_0,\mathfrak{e}^2]
       \subset \mathfrak{e}^2.
\end{gather*}
Denote by $E_{ij}\in \mathfrak{gl}(n+2,{\mathbf{R}})$ the matrix unit of
$(i,j)$ component. Then we put
\begin{gather*}
\gamma =E_{n+2,1}\in \mathfrak{g}_{-2},\\
e_i =E_{i+1,1}\in
\mathfrak{e}^1\subset \mathfrak{g}_{-1},\qquad f_i=E_{n+2,i+1}\in
\mathfrak{e}^2\subset \mathfrak{g}_{-1}\qquad (1\leq i\leq n).
\end{gather*}
We set
\begin{gather*}
[X,Y] = - A(X,Y)\gamma \qquad (X,Y\in \mathfrak{g}_{-1}).
\end{gather*}
It follows that
\begin{gather*}
A(e_i,f_j)={\delta}_{ij}, \qquad A(e_i, e_j) = 0, \qquad A(f_i, f_j) = 0 \qquad  (1 \leq i, j \leq n).
\end{gather*}
Therefore we have that $A$ is a symplectic form.
Then $\mathfrak{g}_{-1}$ becomes a symplectic vector space with respect to~$A$,  and
$e_1, \dots, e_n$, $f_1, \dots, f_n$ form a symplectic basis of the symplectic vector space~$(\mathfrak{g}_{-1},A)$.
Moreover
$\mathfrak{e}^1$, $\mathfrak{e}^2$ form a Lagrangian pair of~$(\mathfrak{g}_{-1}, A)$.

Moreover, put
\begin{gather*}
\mathfrak{a}^1=\mathfrak{e}^1+\mathfrak{g}', \qquad
\mathfrak{a}^2=\mathfrak{e}^2+\mathfrak{g}'.
\end{gather*}
Then we easily verify that
\begin{gather*}
\mathrm{Ad}(G')\mathfrak{a}^1=\mathfrak{a}^1, \qquad
\mathrm{Ad}(G')\mathfrak{a}^2=\mathfrak{a}^2,\qquad
\mathfrak{a}^1\cap \mathfrak{a}^2=\mathfrak{g}'.
\end{gather*}
Thus $\mathfrak{a}^1$ and $\mathfrak{a}^2$ induce invariant
dif\/ferential systems~$E_1$ and $E_2$ on $G/G'$ respectively.
The pair $(E_1,E_2)$ forms
a Lagrangian pair of $(G/G', D)$ and thus
that of the standard contact structure $D$ on $P(T^*P^{n+1})$.
Moreover
both $E_1$ and $E_2$ are completely integrable.
It follows that
$P(T^*P^{n+1})/\mathcal{E}_2\cong P^{n+1}$, $P(T^*P^{n+1})/\mathcal{E}_1
\cong {P^{n+1}}^*$,
where
$\mathcal{E}_1$, $\mathcal{E}_2$ are the foliations induced by~$E_1$,~$E_2$
respectively.

For the linear isotropy representation
$\rho \colon  G' \longrightarrow \mathrm{GL}(\mathfrak{m})$ at the origin
$o = G'$ in $G/G'$,
we have, with respect to the basis $\{ \gamma, e_1, \dots, e_n, f_1, \dots, f_n \}$ in $\mathfrak{m}$,
\begin{gather*}
\tilde{G} =\rho(G')
          =\left\{{\begin{pmatrix}
                      a & 0 & 0 \\
                      b_1 & A & \mathrm{O}_n \\
                      b_2 & \mathrm{O}_n & a\, {}^t\! A^{-1}
                    \end{pmatrix}} \Bigg\vert \,
a\in \mathbf{R}^*,\, A\in \mathrm{GL}(n,{\mathbf{R}}), \,b_1, b_2\in \mathbf{R}^n
\right\}.
\end{gather*}

Then the $\tilde{G}$-structures of type $\mathfrak{m}$ are in bijective
correspondence with the Lagrangian contact structures~\cite[Theorem~5.1]{Tak}.
Note that $\tilde{G}$-structure is of inf\/inite type~\cite[Chapter~I]{K}.
How\-ever~${\mathfrak g}$ is the prolongation of $({\mathfrak m}, {\mathfrak g}_0)$
in the sense of Tanaka~\cite{Tan1,Y}. Moreover
$\tilde{G} = G_0^{\#}$ in the notation of~\cite{Tan}.
Thus, by the f\/initeness theorem of Tanaka~\cite[Corollary~2]{Tan},  we have:

\begin{Prop}
Let $(M,D)$ be a contact manifold of dimension $2n+1$
with a Lagrangian pair $(E_1,E_2)$.
Then the automorphism pseudo-group of all compatible
diffeomorphisms on $M$
as a Lagrangian contact structure
is of finite type, that is to say,
it is a finite-dimensional Lie pseudo-group.
The maximum dimension of the automorphism pseudo-groups, fixing~$n$, is given by
$\dim \mathfrak{sl}(n+2,{\mathbf{R}}) = (n+2)^2-1$
which the flat model attains.
\end{Prop}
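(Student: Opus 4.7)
The proof plan is essentially a direct application of Tanaka's prolongation theory for simple graded Lie algebras, combined with Takeuchi's identification of Lagrangian contact structures with $\tilde{G}$-structures of type $\mathfrak{m}$.

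First, I would invoke Takeuchi's Theorem 5.1 cited just above to translate the question: a Lagrangian contact structure on $(M,D)$ with $\dim M = 2n+1$ is equivalent data to a $\tilde{G}$-structure on the associated graded tangent bundle, modelled on the Heisenberg algebra $\mathfrak{m} = \mathfrak{g}_{-2}\oplus \mathfrak{g}_{-1}$, where $\tilde{G} = \rho(G')$ is the linear isotropy group displayed explicitly in the text. Thus local automorphisms of the Lagrangian contact structure are in bijection with local automorphisms of this filtered geometric structure.

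Next, I would appeal to Tanaka's prolongation. Although $\tilde{G}$ is of infinite type as an ordinary $G$-structure in the sense of Kobayashi, the relevant object is the Tanaka prolongation of the pair $(\mathfrak{m}, \mathfrak{g}_0)$, where $\mathfrak{g}_0 \subset \mathfrak{gl}(\mathfrak{m})$ is the Lie algebra of $\tilde{G}$ (identified with $G_0^{\#}$ in Tanaka's notation). The key structural fact, already recorded in the excerpt, is that this prolongation terminates at step $2$ and the full prolongation is precisely the simple graded Lie algebra $\mathfrak{g} = \mathfrak{sl}(n+2,\mathbf{R}) = \mathfrak{g}_{-2}\oplus \mathfrak{g}_{-1}\oplus \mathfrak{g}_0\oplus \mathfrak{g}_1\oplus \mathfrak{g}_2$. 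Because $\mathfrak{g}$ is finite-dimensional, Tanaka's finiteness theorem (Corollary~2 of the cited paper of Tanaka) immediately gives that the local automorphism pseudo-group is a finite-dimensional Lie pseudo-group, with dimension bounded above by $\dim \mathfrak{g} = (n+2)^2 - 1$.

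Finally, to show sharpness I would exhibit the flat model as attaining the bound. The homogeneous space $G/G' \cong P(T^*P^{n+1})$ carries the canonical Lagrangian contact structure constructed above from the decomposition $\mathfrak{g}_{-1} = \mathfrak{e}^1\oplus \mathfrak{e}^2$, and $G = \mathrm{PGL}(n+2,\mathbf{R})$ acts by automorphisms preserving $D$, $E_1$ and $E_2$. Since this action is transitive and effective (modulo the trivial center, already quotiented out in $\mathrm{PGL}$) and $\dim G = (n+2)^2 - 1$, the upper bound is realized.

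The only genuinely substantive step is the Tanaka finiteness input; once that black-box is invoked the upper bound is automatic, and the flat model provides the matching lower bound by construction. I would therefore write the proof as a short paragraph that (i) cites Takeuchi to reduce to a $\tilde{G}$-structure of type $\mathfrak{m}$, (ii) identifies the Tanaka prolongation of $(\mathfrak{m},\mathfrak{g}_0)$ with $\mathfrak{sl}(n+2,\mathbf{R})$ as stated in the excerpt, (iii) applies Tanaka's finiteness theorem, and (iv) remarks that $G/G'$ realizes the bound.
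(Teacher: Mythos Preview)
Your proposal is correct and matches the paper's argument essentially line for line: the paper likewise invokes Takeuchi's identification of Lagrangian contact structures with $\tilde{G}$-structures of type $\mathfrak{m}$, observes that the Tanaka prolongation of $(\mathfrak{m},\mathfrak{g}_0)$ is $\mathfrak{sl}(n+2,\mathbf{R})$, applies Tanaka's finiteness theorem (Corollary~2 of \cite{Tan}) to get the bound $(n+2)^2-1$, and notes that the flat model $G/G' \cong P(T^*P^{n+1})$ realizes it.
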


Moreover, using a result in \cite{M-M}, we have:

\begin{Prop}
\label{Hess=0}
The equivalence class of a decomposable Monge--Amp{\`e}re system
with a~Lag\-rangian pair $(E_1, E_2)$ on a contact manifold $(M, D)$
is uniquely determined by the Lagrangian contact structure
$(D, E_1, E_2)$.
Therefore the maximal dimension of the automorphism pseudo-groups of decomposable Monge--Amp{\`e}re systems
on $(M^{2n+1}, D)$ is equal to $(n+2)^2-1$.
The maximum is attained by the decomposable Monge--Amp{\`e}re system ${\rm Hess} = 0$
on the flat model on $M = PT^*(P^{n+1})$.
\end{Prop}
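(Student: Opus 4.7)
The plan is to establish the proposition in three stages. The first stage verifies that the Monge--Amp\`ere ideal $\mathcal{M} = \langle \theta, d\theta, \omega \rangle$ of a decomposable Monge--Amp\`ere system with Lagrangian pair $(E_1, E_2)$ depends only on the Lagrangian contact structure $(D, E_1, E_2)$. Adapting the coframe construction used in the proof of Lemma~\ref{bi-decomposability}, I would choose a local coframe $\theta, \alpha_1, \ldots, \alpha_n, \beta_1, \ldots, \beta_n$ dual to $R, X_1, \ldots, X_n, P_1, \ldots, P_n$, where $R$ is the Reeb field of $\theta$ and $(X_i), (P_j)$ are local frames of $E_1$ and $E_2$ respectively. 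The decomposing condition $i_u \omega = 0$ for $u \in E_1$ together with the volume condition on $E_2$ forces $\omega \equiv \mu\, \beta_1 \wedge \cdots \wedge \beta_n\ (\mathrm{mod}\ \theta)$ with $\mu$ nowhere-vanishing, so any two generators differ by a non-vanishing function and an element of the contact ideal $\langle \theta, d\theta\rangle$. A short check confirms that changing the Lagrangian complement $E_2$ of $E_1$ leaves the collection $\beta_1, \ldots, \beta_n$ unaltered (only the $\alpha_i$'s shift by combinations of $\beta_j$'s), so $\mathcal{M}$ is intrinsically attached to $(D, E_1, E_2)$.

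Second, invoking the result of \cite{M-M} recalled earlier in the paper, $E_1$ is recovered as the characteristic system of $\mathcal{M}$, so any isomorphism of decomposable Monge--Amp\`ere systems with Lagrangian pair preserves $E_1$; the additional compatibility with the Lagrangian pair structure forces it to preserve $E_2$ as well. Thus $\mathrm{Aut}(\mathcal{M}; E_1, E_2)$ coincides with the automorphism pseudo-group of the underlying Lagrangian contact structure. Combining this identification with the Takeuchi--Tanaka proposition just stated above, whose prolongation is $\mathfrak{sl}(n+2, {\mathbf{R}})$, I obtain the upper bound $\dim \mathrm{Aut} \leq (n+2)^2 - 1$. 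To see sharpness, on the flat model $M = P(T^*P^{n+1})$ with the canonical Lagrangian pair given by the fibers of the two natural projections $M \to P^{n+1}$ and $M \to (P^{n+1})^*$, working in the Darboux chart $\theta = dz - \sum_i p_i\, dx_i$, the canonical generator reads $\omega = dp_1 \wedge \cdots \wedge dp_n$; pulling back along a Legendre graph $z = f(x_1, \ldots, x_n)$ with $p_i = \partial f/\partial x_i$ yields $\mathrm{Hess}(f)\, dx_1 \wedge \cdots \wedge dx_n$, identifying the equation as $\mathrm{Hess}(f) = 0$. The action of $G = \mathrm{PGL}(n+2, {\mathbf{R}})$ on $M$ preserves both the contact structure and the Lagrangian pair, and thus realises the dimension $(n+2)^2 - 1$.

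The main obstacle is the first stage: although $E_2$ is not uniquely determined by $\mathcal{M}$ alone, one must verify that any two Lagrangian complements of $E_1$ give the same Monge--Amp\`ere ideal, which reduces to the computation that $\beta_1 \wedge \cdots \wedge \beta_n$ modulo $\theta$ is insensitive to the choice of complement. Once this intrinsic nature is established, the identification of automorphism pseudo-groups and the bound via $\mathfrak{sl}(n+2, {\mathbf{R}})$ are formal, and the flat model calculation of $\mathrm{Hess}=0$ provides the example achieving the maximum.
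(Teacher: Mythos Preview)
Your proposal is correct and follows essentially the same line as the paper's proof: both begin with the coframe $(\theta,\alpha_i,\beta_j)$ dual to $(R,X_i,P_j)$ to write the generator as $\omega\equiv\mu\,\beta_1\wedge\cdots\wedge\beta_n$ modulo $\theta$, and then argue that the ideal $\langle\theta,d\theta,\beta_1\wedge\cdots\wedge\beta_n\rangle$ depends only on the Lagrangian contact structure, after which the dimension bound and the flat-model example follow from the preceding proposition on Lagrangian contact structures.

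The one genuine difference is in how the key independence step is justified. The paper invokes Lemma~\ref{decomposable-M-M} (the result of \cite{M-M}) to conclude directly that the Monge--Amp\`ere ideal generated by a decomposable $n$-form is determined by the annihilator of that form, hence by $E_1$ alone. You instead argue by hand that replacing $E_2$ by another Lagrangian complement only shifts the $\alpha_i$ by combinations of $\beta_j$, so the span of the $\beta_j$---and hence $\beta_1\wedge\cdots\wedge\beta_n$ up to a nonvanishing factor---is unchanged. Your direct computation is elementary and entirely sufficient here; the paper's route via Lemma~\ref{decomposable-M-M} is shorter and also makes explicit that $E_1$ is recovered intrinsically as the characteristic system of $\mathcal{M}$, a fact you cite separately in your second stage. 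A small stylistic point: in your second stage, once you declare that automorphisms are required to preserve the Lagrangian pair, the preservation of $E_1$ and $E_2$ is by definition, so the detour through the characteristic system is unnecessary there (it would matter only if you were analyzing automorphisms of $\mathcal{M}$ alone).
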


In fact it is given the following result in~\cite{M-M}, which implies Proposition~\ref{Hess=0}:

\begin{Lem}[\protect{\cite[Proposition~2.1]{M-M}}]
\label{decomposable-M-M}
Let $(V, \Theta)$ be a symplectic vector space of dimension~$2n$.
For given two non-zero decomposable $n$-covectors $\omega = \beta_1 \wedge \cdots \wedge \beta_n$,
$\omega' = \beta'_1 \wedge \cdots \wedge \beta'_n$
with $\beta_i, \beta'_i \in V^*$,
we have $\omega' = \lambda\omega + \phi\wedge\Theta$ for a nonzero scalar $\lambda$ and a~$(n-2)$-covector $\phi$, if and only if, the annihilator of $\beta_1, \dots, \beta_n$ in~$V$ and
the annihilator of $\beta'_1, \dots, \beta'_n$ in~$V$ are either identical
or perpendicular with respect to~$\Theta$.
\end{Lem}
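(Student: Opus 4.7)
My plan is to reduce the statement to a proportionality question for effective (primitive) parts of $\omega,\omega'$, and apply Lemma~\ref{Lych}. The symplectic Lefschetz decomposition gives $\wedge^n V^* = P^n\oplus(\Theta\wedge\wedge^{n-2}V^*)$ where $P^n$ denotes the effective $n$-covectors; writing $\hat\omega\in P^n$ for the effective projection of $\omega$, the condition $\omega'=\lambda\omega+\phi\wedge\Theta$ for some $(n-2)$-covector $\phi$ is equivalent to $\hat\omega'=\lambda\hat\omega$. For any Lagrangian $L_0\subset V$, $(\phi\wedge\Theta)|_{L_0}=0$ because $\Theta|_{L_0}=0$, so $\hat\omega|_{L_0}=\omega|_{L_0}$. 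Since $\omega=\beta_1\wedge\cdots\wedge\beta_n$ is decomposable with kernel (annihilator) $L$, one has $\omega|_{L_0}=0$ iff $L_0\cap L\neq\{0\}$. Hence the problem reduces to comparing the Maslov cycles $\Sigma(L):=\{L_0\text{ Lagrangian}:L_0\cap L\neq\{0\}\}$ and $\Sigma(L')$.

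A key symplectic duality underlies both implications: for any $n$-dimensional subspace $L$ and any Lagrangian $L_0$,
\[
L_0\cap L=\{0\}\iff L_0+L=V\iff(L_0+L)^\perp=\{0\}\iff L_0\cap L^\perp=\{0\},
\]
using $L_0=L_0^\perp$. Thus $\Sigma(L)=\Sigma(L^\perp)$ automatically. This yields the $(\Leftarrow)$ direction: if $L'=L$ then $\omega,\omega'$ both lie in the one-dimensional space of $n$-covectors annihilated by $L$, so $\omega'=\lambda\omega$ with $\lambda\neq 0$; if $L'=L^\perp$ then $\omega,\omega'$ vanish on the same Lagrangians, Lemma~\ref{Lych} applied to $\hat\omega,\hat\omega'$ gives $\hat\omega'=\mu\hat\omega$, and one writes $\omega'=\mu\omega+\eta\wedge\Theta$. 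Nonvanishing of $\mu$ follows from $\hat\omega\neq 0$, which I would verify by exhibiting a Lagrangian $L_0$ transverse to $L$: on such an $L_0$, $\omega|_{L_0}=\hat\omega|_{L_0}$ is a nonzero top form, and existence of such $L_0$ is a standard symplectic-linear-algebra fact.

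For the $(\Rightarrow)$ direction, the hypothesis gives $\omega'|_{L_0}=\lambda\omega|_{L_0}$ on every Lagrangian, hence $\Sigma(L)=\Sigma(L')$. I would then recover the unordered pair $\{L,L^\perp\}$ from $\Sigma(L)$ by identifying
\[
Z(L):=\{v\in V:\text{every Lagrangian through }v\text{ lies in }\Sigma(L)\}
\]
with $L\cup L^\perp$. The inclusion $L\cup L^\perp\subseteq Z(L)$ follows from dimension counts in $v^\perp$: if $v\in L^\perp$, both any Lagrangian $L_0\ni v$ and $L$ sit inside $v^\perp$ of codimension $1$, forcing $\dim(L_0\cap L)\geq 1$. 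For the reverse inclusion, given $v\notin L\cup L^\perp$ I would pass to the symplectic reduction $v^\perp/\langle v\rangle$ (of dimension $2n-2$), where Lagrangians of $V$ through $v$ correspond to Lagrangians of the quotient, and use a generic-position argument to find a Lagrangian of the quotient disjoint from the image of $L\cap v^\perp$. Once $L\cup L^\perp$ is determined, so is the unordered pair $\{L,L^\perp\}$, since any linear subspace contained in the union of two linear subspaces must lie in one of them, so $L$ and $L^\perp$ are the unique maximal linear subspaces of $L\cup L^\perp$.

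The main obstacle is the generic-position step in the $(\Rightarrow)$ direction, which rests on the standard but nontrivial fact that any subspace of dimension $\leq n-1$ in a $(2n-2)$-dimensional symplectic space admits a Lagrangian complement; this can be shown by induction on the dimension of the ambient symplectic space. The remaining steps are direct consequences of the Lefschetz decomposition, Lemma~\ref{Lych}, and elementary symplectic linear algebra.
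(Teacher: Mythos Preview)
The paper does not supply its own proof of this lemma; it is quoted verbatim from \cite[Proposition~2.1]{M-M} and used as a black box in the proof of Proposition~\ref{Hess=0}. There is thus no argument in the present paper to compare your approach against.

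That said, your argument is correct and self-contained within the tools the paper provides. The reduction to effective parts via the Lefschetz decomposition, the identity $\hat\omega|_{L_0}=\omega|_{L_0}$ on Lagrangians, and the symplectic duality $\Sigma(L)=\Sigma(L^\perp)$ are all sound; together with Lemma~\ref{Lych} they dispatch the $(\Leftarrow)$ direction. For $(\Rightarrow)$, your recovery of the unordered pair $\{L,L^\perp\}$ as the locus $Z(L)$ of vectors through which every Lagrangian meets $L$ is correct, and the symplectic reduction in $v^\perp/\langle v\rangle$ is exactly the right move. Note that the image $\bar L$ of $L\cap v^\perp$ there again has dimension $n-1$, i.e.\ is half-dimensional in the $(2n-2)$-dimensional reduced space, so the inductive statement you actually need is ``every half-dimensional subspace of a symplectic vector space admits a Lagrangian complement''; your reduction reproduces precisely this statement in lower dimension, and the base case $n=1$ is trivial. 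One small point worth making explicit: Lemma~\ref{Lych} is stated for \emph{isotropic} subspaces, but since an $n$-form restricts to zero on any subspace of dimension below $n$, its hypothesis for $k=n$ reduces to a condition on Lagrangian subspaces only, which is what you verify.
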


\begin{proof}[Proof of Proposition \ref{Hess=0}]
Let ${\mathcal M}$ be a decomposable Monge--Amp{\`e}re system
with a~Lag\-rangian pair $(E_1, E_2)$ on $(M, D)$.
First we observe that ${\mathcal M}$ has a decomposable local generator.
To see this, let $X_1, \dots, X_n$ and $P_1, \dots, P_n$ be local frames
of $E_1$ and~$E_2$ respectively.
Let~$R$ be the Reeb vector f\/ield
for a local contact form~$\theta$ def\/ining~$D$.
Consider the dual coframe $\theta, \alpha_1,\dots,\alpha_n, \beta_1,\dots,\beta_n$
of~$T^*M$ to the frame $R, X_1, \dots, X_n, P_1, \dots, P_n$ of $TM$.
Then we see, by the decomposing condition,
that there exist an $(n-1)$-form $\gamma$ and a
non-vanishing function~$\mu$ on~$M$ such that
$\omega = \mu(\beta_1 \wedge \cdots \wedge \beta_n) + \theta\wedge\gamma$.
Thus we have ${\mathcal M} = \langle \beta_1 \wedge \cdots \wedge \beta_n, \theta, d\theta\rangle$.
Note that ${\rm Ann}(\theta, \alpha_1, \dots, \alpha_n) = E_1$.
Then, by Lemma~\ref{decomposable-M-M}, we see that ${\mathcal M}$ is determined just by~$E_1$.
In particular, given a Lagrangian pair $(E_1, E_2)$, the decomposable Monge--Amp{\`e}re system
with the Lagrangian pair $(E_1, E_2)$ is uniquely determined.
\end{proof}

\begin{Rem}
If the characteristic system $E_1$ is integrable, then the decomposable Monge--Amp{\`e}re system
is isomorphic to the system corresponding to the equation ${\rm Hess} = 0$ (see \cite{M-M,Mo2}).
\end{Rem}

\begin{Rem}
The equation ${\rm Hess} = 0$ has the inf\/inite-dimensional automorphism pseudo-group
which consists of the lifts of dif\/feomorphisms on the dual projective space.
However, if a Lagrangian pair is associated, then the automorphism pseudo-group
turns to be of f\/inite-dimensional, as stated in Proposition~\ref{Hess=0}.
\end{Rem}

\section[Automorphisms of Monge-Amp{\`e}re systems with Lagrangian pairs]{Automorphisms of Monge--Amp{\`e}re systems\\ with Lagrangian pairs}
\label{Automorphisms}

Let us consider the automorphism pseudo-group Aut($\mathcal{M}$) of
all local isomorphisms of a bi-decomposable Monge--Amp{\`e}re system
$\mathcal{M} = \langle \theta, d\theta, \omega \rangle$
with a Lagrangian pair $(E_1,E_2)$
on a~contact manifold $(M,D)$ of dimension $2n+1$.

By Theorems \ref{bi-decomposable-form} and~\ref{Uniqueness}, in the case $n \geq 3$,
any automorphism of any bi-decomposable Monge--Amp{\`e}re system with a~Lagrangian pair $(E_1, E_2)$ preserves the Lagrangian pair $(E_1, E_2)$
up to the interchange of~$E_1$,~$E_2$.
Therefore inf\/initesimal symmetries
of bi-decom\-po\-sable Monge--Amp{\`e}re systems
are studied based on inf\/initesimal symmetries of Lagrangian contact structures.

Now let $\overline{G}$ be
\begin{gather*}
\overline{G} = \left\{\left. \begin{pmatrix}
                       c^2 & 0 & 0 \\
                        b_1 & cA &   \mathrm{O}_n \\
                        b_2 &   \mathrm{O}_n &   c\, {}^t\!\! A^{-1}
                    \end{pmatrix}
                    \, \right\vert \,
c \in {\mathbf{R}}^{\times}, \,A\in \mathrm{SL}(n,{\mathbf{R}}),\, b_1, b_2 \in {\mathbf{R}}^n
\right\}.
\end{gather*}

\begin{Prop}
The bi-decomposable Monge--Amp{\`e}re systems with Lagrangian pairs
are in bijective correspondence with $\overline{G}$-structures of type~$\mathfrak{m}$.
\end{Prop}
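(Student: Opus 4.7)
The plan is to extend Takeuchi's bijection (recalled in Section~\ref{Lagrangian contact structures.}) between Lagrangian contact structures on $(M,D)$ of dimension $2n+1$ and $\tilde{G}$-structures of type $\mathfrak{m}$ by proving that supplementing a Lagrangian contact structure with a compatible bi-decomposable form amounts exactly to a reduction of the structure group from $\tilde{G}$ to $\overline{G}$; the crux is a stabilizer computation in the fibre. I would first fix a model bi-decomposable form on $\mathfrak{g}_{-1}$: using the standard Lagrangian pair $(\mathfrak{e}^1, \mathfrak{e}^2)$ and the symplectic basis $\{e_1,\dots,e_n,f_1,\dots,f_n\}$ of Section~\ref{Lagrangian contact structures.}, set
\begin{gather*}
\omega_0^{(1)} = e_1^* \wedge \cdots \wedge e_n^*, \qquad \omega_0^{(2)} = f_1^* \wedge \cdots \wedge f_n^*, \qquad \omega_0 = \omega_0^{(1)} - \omega_0^{(2)},
\end{gather*}
which is the normal form of Lemma~\ref{symplectic basis} with $c=1$. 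A direct computation of the pullback action in the block-matrix parametrization $g = (a, A, b_1, b_2)$ of $\tilde{G}$ yields
\begin{gather*}
g^* \omega_0 = \det(A)\, \omega_0^{(1)} - a^n \det(A)^{-1}\, \omega_0^{(2)},
\end{gather*}
so $g^*\omega_0$ is a non-zero scalar multiple of $\omega_0$ precisely when $\det(A)^2 = a^n$; the substitution $a = c^2$, $A = cA'$ with $A' \in \mathrm{SL}(n,\mathbf{R})$ identifies this stabilizer with the given group $\overline{G}$.

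For the forward map, let $\mathcal{M}$ be a bi-decomposable Monge--Amp\`ere system with Lagrangian pair $(E_1, E_2)$, and let $\tilde{P}$ be the $\tilde{G}$-structure of type $\mathfrak{m}$ associated to $(D, E_1, E_2)$ by Takeuchi's theorem. I would define $P \subset \tilde{P}$ to be the subbundle of frames that carry the triple $(\mathfrak{e}^1, \mathfrak{e}^2, \omega_0)$ to $(E_1, E_2, \omega|_{D_x})$ for some local bi-decomposable generator $\omega$ of $\mathcal{M}$. Existence of such frames at every point is furnished by Lemma~\ref{bi-decomposability}, and independence of the choice of generator is guaranteed by Theorems~\ref{bi-decomposable-form} and~\ref{Uniqueness}: any other bi-decomposable generator differs by multiplication by a non-vanishing function modulo $\theta$, and the bi-decomposition is canonical up to ordering of the Lagrangian pair. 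By the stabilizer computation above, $P$ is an $\overline{G}$-subbundle of $\tilde{P}$, i.e., an $\overline{G}$-structure of type $\mathfrak{m}$.

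For the inverse map, an $\overline{G}$-structure $P$ determines a $\tilde{G}$-structure $\tilde{P}$, hence a Lagrangian contact structure $(D, E_1, E_2)$, and one recovers the bi-decomposable form on $D_x$ by the formula $\omega|_{D_x} := (u^{-1})^* \omega_0$ for any frame $u \in P_x$; the stabilizer property of $\overline{G}$ shows that this is well-defined up to a non-vanishing function on $M$, and any extension modulo $\theta$ yields a local bi-decomposable generator of the resulting Monge--Amp\`ere system $\langle \theta, d\theta, \omega \rangle$ with the prescribed Lagrangian pair. The two constructions are inverse by construction, giving the claimed bijection. The principal obstacle is the precise stabilizer computation: one must track how the matrix entries of $\tilde{G}$ act on the top forms of $\mathfrak{e}^1$ and $\mathfrak{e}^2$ and match the scalar freedom $\omega \mapsto \mu \omega + \theta \wedge \eta$ (Theorem~\ref{bi-decomposable-form}) with the $\lambda = \det(A)$ scaling of the stabilizer.
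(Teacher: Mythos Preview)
Your argument is essentially the paper's, carried out with more detail: the paper simply identifies $\overline{G}$ with the group $C(\mathfrak{m};\mathfrak{e}^1,\Omega_1;\mathfrak{e}^2,\Omega_2)$ of GLA-automorphisms of $\mathfrak{m}$ preserving $\mathfrak{e}^1$, $\mathfrak{e}^2$ and scaling fixed volume forms $\Omega_1\in\wedge^n(\mathfrak{e}^{1*})$, $\Omega_2\in\wedge^n(\mathfrak{e}^{2*})$ by a common nonzero factor, which is equivalent to your stabilizer-of-$[\omega_0]$ computation. Your explicit forward and inverse constructions, and your appeal to Theorem~\ref{bi-decomposable-form} for well-definedness, flesh out what the paper leaves implicit; note that since the Lagrangian pair is part of the data here, Theorem~\ref{Uniqueness} is not actually needed.
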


\begin{proof}
We consider, as in Section~\ref{Lagrangian contact structures.},
the fundamental GLA of contact type
$\mathfrak{m}= \mathfrak{g}_{-2}\oplus \mathfrak{g}_{-1}$ and
the decomposition
$\mathfrak{g}_{-1}=\mathfrak{e}^1\oplus \mathfrak{e}^2$ into the Lagrangian pair.
Moreover we f\/ix a volume form $\Omega_1 \in \wedge^n(\mathfrak{e}^{1*})$ on $\mathfrak{e}^1$ and
a volume form $\Omega_2 \in \wedge^n(\mathfrak{e}^{2*})$ on $\mathfrak{e}^2$.
Consider the group $C(\mathfrak{m}; \mathfrak{e}^1, \Omega_1; \mathfrak{e}^2, \Omega_2)$
consisting of all $a \in  {\rm GL}(\mathfrak{m})$ which satisf\/ies the following conditions:
$a\mathfrak{g}_{-1} = \mathfrak{g}_{-1}$,
the graded linear automorphism $\overline{a}$ of $\mathfrak{m}$ induced by $a$ is a GLA-automorphism,
$a\mathfrak{e}^1 = \mathfrak{e}^1$, $a\mathfrak{e}^2 = \mathfrak{e}^2$, and
$a^*\Omega_1 = \lambda\Omega_1$, $a^*\Omega_2 = \lambda\Omega_2$ for some $\lambda \in {\mathbf{R}}^{\times}$.
Then we have that $C(\mathfrak{m}; \mathfrak{e}^1, \Omega_1; \mathfrak{e}^2, \Omega_2)$ is identical with
$\overline{G}$.
\end{proof}

Thus the equivalence problem of bi-decomposable Monge--Amp{\`e}re systems with Lagrangian pairs
is studied as an adapted $G$-structure on a contact manifold of dimension~$2n+1$.

Let $G^0$ be a subgroup of ${\rm GL}(n+2, {\mathbf{R}})$ def\/ined by
\begin{gather*}
G^0 = \left\{ \left.
\begin{pmatrix}
                      \ k^{-1} & 0 & \ 0 \\
                      \ b_1 & A & \ 0 \\
                      \ a & {}^tb_2 &  \ k
                    \end{pmatrix}
                   \,  \right\vert \,
                    k \in {\mathbf{R}}^{\times}, \, A \in  {\rm SL}(n, {\mathbf{R}}), \, b_1, b_2 \in {\mathbf{R}}^n, \, a \in {\mathbf{R}}
                    \right\}.
\end{gather*}
We set
\begin{gather*}
H^0 = \left\{ \left.
\begin{pmatrix}
                      \ k^{-1} & 0 & \ 0 \\
                      \ 0 & A & \ 0 \\
                      \ 0 & 0 &  \ k
                              \end{pmatrix}
         \,  \right\vert   \,
        k \in {\mathbf{R}}^{\times}, \, A \in  {\rm SL} (n, {\mathbf{R}})
 \right\}
\subset G^0.
\end{gather*}
Then we have the model space
$G^0/H^0 \cong {\mathbf{R}}^{2n+1}$ with coordinates
$(x, z, p)$.
The $G^0$-action on ${\mathbf{R}}^{2n+1}$ is described by
\begin{gather*}
(x, z, p) \mapsto (x', z', p') = \left(k(Ax + b_1), k\big(kz - {}^tb_2x - a\big), k\left(p - \frac{1}{k}{}^tb_2\right)A^{-1}\right).
\end{gather*}
Then $dz' - p'dx' = k^2(dz - pdx)$ and
the form $\omega = c dx_1\wedge \cdots\wedge dx_n - dp_1\wedge\cdots\wedge dp_n$
is transformed to $\omega' = k^n\omega$.

Let $E_{i,j}$, $0 \leq i, j \leq n+1$, denotes the elementary $(n+2)\times (n+2)$-matrix such that
the $(i, j)$ component is $1$ and other components are all zero.
We set
\begin{gather*}
\varepsilon = - E_{0,0} + E_{n+1,n+1}, \qquad e_i = E_{i,0} \quad (1 \leq i \leq n), \\
f_j = E_{n+1,j} \quad (1 \leq j \leq n), \qquad \gamma = E_{n+1,0},
\end{gather*}
and identify the set of traceless matrices ${\mathfrak{sl}}(n, {\mathbf{R}})$ with
\begin{gather*}
\left\{ \left.
\begin{pmatrix}
                      0 & 0 &  0 \\
                       0 & A &  0 \\
                       0 & 0 &  0
                              \end{pmatrix}
         \,  \right\vert   \,
       A \in {\mathfrak{sl}}(n, {\mathbf{R}})
 \right\}
\subset {\mathfrak g}^0.
\end{gather*}
Then we set
${\mathfrak g}_0 = \langle\varepsilon\rangle_{{\mathbf{R}}} \oplus {\mathfrak{sl}}(n, {\mathbf{R}})$,
${\mathfrak g}_{-1}^1 = \langle e_1, \dots, e_n\rangle_{{\mathbf{R}}}$,
${\mathfrak g}_{-1}^2 = \langle f_1, \dots, f_n\rangle_{{\mathbf{R}}}$,
${\mathfrak g}_{-1} = {\mathfrak g}_{-1}^1 \oplus {\mathfrak g}_{-1}^2$,
and ${\mathfrak g}_{-2} = \langle\gamma\rangle_{{\mathbf{R}}}$.
Then
\begin{gather*}
{\mathfrak g}^0 = {\mathfrak g}_{-2} \oplus {\mathfrak g}_{-1} \oplus {\mathfrak g}_{0}
\end{gather*}
is the Lie algebra of $G^0$. We write
${\mathfrak m} = {\mathfrak g}_{-2}\oplus{\mathfrak g}_{-1}$.
Then the Lie subalgebra~${\mathfrak m}$ becomes the split Heisenberg algebra.

The matrix representation of $A_0 \in \mathfrak{g}_0=\langle\varepsilon\rangle_{{\mathbf{R}}} \oplus {\mathfrak{sl}}(n, {\mathbf{R}})$
with respect to the basis $\{ \gamma, \, e_i $ $(1 \leq i \leq n),\,   f_j\,  (1 \leq i \leq n) \}$ in
the Heisenberg algebra
$V= \mathfrak{m} = \mathfrak{g}_{-2}\oplus \mathfrak{g}_{-1}$
has the following form:
\begin{gather*}
A_0=C+A_0'=
 c
\begin{pmatrix}
2 & 0 & 0 \\
0 & I & \mathrm{O} \\
0 & \mathrm{O} & I
\end{pmatrix}
+
\begin{pmatrix}
0 & 0 & 0 \\
0 & A & \mathrm{O} \\
0 & \mathrm{O} & -{}^t\!A
\end{pmatrix}.
\end{gather*}
In fact, for the commutators, we have by the direct calculations:
\begin{gather*}
[\varepsilon, \gamma] = 2\gamma,
\qquad
[\varepsilon, e_i] = e_i,
\qquad
[\varepsilon, f_j] = f_j,
\\
[A, \gamma] = O,
\qquad
[A, e_i] = a_i,
\qquad
[A, f_j] = - a^j,
\end{gather*}
where $a_i$ is the $i$-th column of $A$ and $a^j$ is the $j$-th row of $A$,
$1 \leq i, j \leq n$, and $A \in {\mathfrak{sl}}(n, {\mathbf{R}})$.
Moreover we have
\begin{gather*}
[e_i, f_j] = - \delta_{ij}\gamma, \qquad [e_i, e_j] = 0, \qquad [f_i, f_j] = 0 \quad (1 \leq i, j \leq n).
\end{gather*}

We will study the prolongation of
$({\mathfrak m}, {\mathfrak g}_0)$.
We def\/ine the prolongation inductively
${\mathfrak g}_k = {\mathfrak g}({\mathfrak m}, {\mathfrak g}_0)_k$ $(k \geq 1)$ by
the set of elements
$\{
(\alpha, \beta) \in  {\rm Hom}({\mathfrak g}_{-1}, {\mathfrak g}_{k-1})\oplus
{\rm Hom}({\mathfrak g}_{-2}, {\mathfrak g}_{k-2})\}$ satisfying
\begin{alignat*}{3}
& {\rm (i)} \ \ && \beta([x, y]) = [\alpha(x), y] - [\alpha(y), x]   \qquad (x, y \in {\mathfrak g}_{-1}), &
\\
& {\rm (ii)} \ \ && [\alpha(y), z] = [\beta(z), y]  \qquad (y \in {\mathfrak g}_{-1},\, z \in {\mathfrak g}_{-2}).&
\end{alignat*}
See \cite[p.~429]{Y}. Then we have

\begin{Lem}
\label{prolongation-calculation}
The prolongation ${\mathfrak g}_i$ vanishes for any $i \geq 1$.
\end{Lem}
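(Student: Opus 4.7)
The plan is to reduce the vanishing of every $\mathfrak{g}_i$, $i \geq 1$, to the single statement $\mathfrak{g}_1 = 0$, and to establish the latter by comparison with the Tanaka prolongation of the full Lagrangian contact algebra computed in Section \ref{Lagrangian contact structures.}.

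First I would exploit the containment $\mathfrak{g}_0 \subset \mathfrak{g}_0^{\mathrm{Tak}}$, where $\mathfrak{g}_0^{\mathrm{Tak}}$ denotes the degree-zero piece of $\mathfrak{sl}(n+2, \mathbf{R})$ as parametrised in Section \ref{Lagrangian contact structures.} by block-diagonal triples $(\alpha, \beta, A)$ with $\alpha + \beta + \mathrm{tr}(A) = 0$. Our $\mathfrak{g}_0 = \langle \varepsilon \rangle_{\mathbf{R}} \oplus \mathfrak{sl}(n, \mathbf{R})$ is obtained from $\mathfrak{g}_0^{\mathrm{Tak}}$ by the single extra linear constraint $\alpha + \beta = 0$ (equivalently $\mathrm{tr}(A) = 0$). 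Since shrinking $\mathfrak{g}_0$ only tightens conditions (i)--(ii) defining the prolongation, one has $\mathfrak{g}_1 \subset \mathfrak{g}_1^{\mathrm{Tak}}$, and therefore any candidate $X \in \mathfrak{g}_1$ has the shape
\begin{gather*}
X = \sum_{i=1}^n \xi_i E_{0, i} + \sum_{j=1}^n \eta_j E_{j, n+1}
\end{gather*}
displayed in Section \ref{Lagrangian contact structures.}.

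Next I would test $X$ against $\mathfrak{g}_{-1}$ using the strengthened bracket constraint $[X, \mathfrak{g}_{-1}] \subset \mathfrak{g}_0$. A direct matrix-unit computation yields
\begin{gather*}
[X, e_k] = \xi_k E_{0, 0} - \sum_{i=1}^n \xi_i E_{k, i}, \qquad [X, f_l] = -\eta_l E_{n+1, n+1} + \sum_{j=1}^n \eta_j E_{j, l}.
\end{gather*}
In the $(\alpha, \beta, A)$-coordinates, $[X, e_k]$ has $(\alpha, \beta) = (\xi_k, 0)$, and the cutting condition $\alpha + \beta = 0$ of our smaller $\mathfrak{g}_0$ forces $\xi_k = 0$; symmetrically $[X, f_l]$ has $(\alpha, \beta) = (0, -\eta_l)$ and therefore $\eta_l = 0$. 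As this holds for every $k$ and $l$, we conclude $X = 0$, giving $\mathfrak{g}_1 = 0$.

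Finally, the vanishing $\mathfrak{g}_k = 0$ for $k \geq 2$ is automatic. Any $u \in \mathfrak{g}_k$ splits as $u|_{\mathfrak{g}_{-1}} \colon \mathfrak{g}_{-1} \to \mathfrak{g}_{k-1}$ and $u|_{\mathfrak{g}_{-2}} \colon \mathfrak{g}_{-2} \to \mathfrak{g}_{k-2}$. For $k = 2$ one has $u|_{\mathfrak{g}_{-1}} = 0$ because $\mathfrak{g}_1 = 0$, so condition (i) applied to $x, y \in \mathfrak{g}_{-1}$ yields $u([x, y]) = 0$; since $\mathfrak{m}$ is Heisenberg, $\mathfrak{g}_{-2} = [\mathfrak{g}_{-1}, \mathfrak{g}_{-1}]$ and hence $u|_{\mathfrak{g}_{-2}} = 0$ as well. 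For $k \geq 3$ both target spaces $\mathfrak{g}_{k-1}$ and $\mathfrak{g}_{k-2}$ are already zero, so $u = 0$ trivially. The main bookkeeping hurdle will be the sign and index tracking in the matrix-unit bracket computation above; the conceptual content is simply that the one extra linear equation cutting $\mathfrak{g}_0$ out of $\mathfrak{g}_0^{\mathrm{Tak}}$ is exactly the one obstructed by every non-zero element of $\mathfrak{g}_1^{\mathrm{Tak}}$.
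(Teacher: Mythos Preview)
Your proof is correct and takes a genuinely different route from the paper's. The paper computes $\mathfrak{g}_1$ from scratch: it writes a general $(\alpha,\beta)\in\mathrm{Hom}(\mathfrak{g}_{-1},\mathfrak{g}_0)\oplus\mathrm{Hom}(\mathfrak{g}_{-2},\mathfrak{g}_{-1})$ in coordinates, expands conditions (i)--(ii), and finds that the trace constraints $\operatorname{tr}A^{(i)}=\operatorname{tr}B^{(j)}=0$ force $(n+2)h_i=(n+2)k_j=0$, whence everything vanishes. You instead use the monotonicity $\mathfrak{g}_1\subset\mathfrak{g}_1^{\mathrm{Tak}}$ under shrinking $\mathfrak{g}_0$, invoke the cited fact from Section~\ref{Lagrangian contact structures.} that the Tanaka prolongation of $(\mathfrak{m},\mathfrak{g}_0^{\mathrm{Tak}})$ is exactly $\mathfrak{sl}(n+2,\mathbf{R})$, and then test the extra equation $\alpha+\beta=0$ against $[X,e_k]$ and $[X,f_l]$. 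Your matrix-unit brackets are correct, and the $k\geq 2$ step is handled the same way as in the paper (with your use of $\mathfrak{g}_{-2}=[\mathfrak{g}_{-1},\mathfrak{g}_{-1}]$ making the $k=2$ case slightly cleaner). The trade-off: the paper's argument is self-contained and never appeals to the full $\mathfrak{sl}(n+2)$ prolongation, while yours is shorter and makes transparent \emph{why} the single lost dimension in $\mathfrak{g}_0$ kills all of $\mathfrak{g}_1^{\mathrm{Tak}}$---precisely because each basis vector of $\mathfrak{g}_1^{\mathrm{Tak}}$ brackets into a $\mathfrak{g}_0^{\mathrm{Tak}}$-element with $\alpha+\beta\neq 0$.
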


\begin{proof}
First we calculate ${\mathfrak g}_1$. For any $(\alpha, \beta) \in
{\rm Hom}(\mathfrak{g}_{-1}, \mathfrak{g}_0) \oplus {\rm Hom}(\mathfrak{g}_{-2}, \mathfrak{g}_{-1})$,
the condi\-tions~(i) and (ii) imply that
\begin{alignat*}{3}
& \text{(i)}   \ \ && \beta([e_i, f_j]) = [\alpha(e_i), f_j] - [\alpha(f_j), e_i]  \qquad (1 \leq i, j \leq n), &
\\
& \text{(ii-1)} \ \ & & [\alpha(e_i), \gamma] = [\beta(\gamma), e_i] \qquad (1 \leq i \leq n),&
\\
& \text{(ii-2)} \ \ && [\alpha(f_j), \gamma] = [\beta(\gamma), f_j] \qquad (1 \leq j \leq n).&
\end{alignat*}
Set
\begin{gather*}
\beta(\gamma) = \sum_{\ell=1}^n  b_\ell e_\ell + \sum_{m=1}^n  c_m  f_m,
\qquad
\alpha(e_i) = h_i\varepsilon + A^{(i)}, \qquad
\alpha(f_j) = k_j\varepsilon + B^{(j)},
\end{gather*}
where $b_\ell, c_m, h_i, k_j \in {\mathbf{R}}$, $A^{(i)}, B^{(j)} \in {\mathfrak{sl}}(n, {\mathbf{R}})$.
Then
\begin{gather*}
\beta([e_i, f_j]) = \sum_{\ell=1}^n (- \delta_{ij}b_\ell)e_\ell + \sum_{m=1}^n (- \delta_{ij}c_m)f_m,
\\
{[\alpha(e_i), f_j]} = h_i f_j - \big(j{\mbox{\rm -th row of }} A^{(i)}\big), \qquad
{[\alpha(f_j), e_i]} = k_j e_i + \big(i{\mbox{\rm -th column of }} B^{(j)}\big),
\\
{[\alpha(e_i), \gamma]} = 2h_i\gamma, \qquad {[\beta(\gamma), e_i]} = c_i\gamma, \qquad
{[\alpha(f_j), \gamma]} = 2k_j\gamma, \qquad {[\beta(\gamma), f_j]} = - b_j\gamma.
\end{gather*}
By the condition (i), we have
\begin{gather*}
B^{(j)}_{\ell i} = - \delta_{\ell i}k_j + \delta_{ij}b_\ell, \qquad
A^{(i)}_{j m} = \delta_{j m}h_i + \delta_{ij}c_m \qquad (1 \leq i, j, \ell, m \leq n).
\end{gather*}
By the condition (ii-1), we have $c_i = 2h_i$, $1 \leq i \leq n$.
By the condition (ii-2), we have $b_j = - 2k_j$, $1 \leq j \leq n$.
Therefore we have
\begin{gather*}
B^{(j)}_{\ell i} = - \delta_{\ell i}k_j - 2\delta_{ij}k_\ell, \qquad
A^{(i)}_{j m} = \delta_{j m}h_i + 2\delta_{ij}h_m \qquad (1 \leq i, j, \ell, m \leq n).
\end{gather*}
In particular, for any $j$, $\ell$ with $1 \leq j, \ell \leq n$,
we have $B^{(j)}_{\ell\ell} = - k_j - 2\delta_{\ell j}k_\ell$, and therefore
\begin{gather*}
0 = \operatorname{tr} B^{(j)} = - (n+2)k_j,
\end{gather*}
hence $k_j = 0$, $1 \leq j \leq n$.
For any $i$, $m$ with $1 \leq i, m \leq n$, we have
$A^{(j)}_{mm} = h_i + 2\delta_{i m}h_m$, and therefore
\begin{gather*}
0 = \operatorname{tr}  A^{(i)} = (n+2)h_i,
\end{gather*}
hence $h_i = 0$, $1 \leq i \leq n$.
Thus we have $\beta = 0$ and $\alpha = 0$. Therefore we have ${\mathfrak g}_1 = 0$.

Second we calculate ${\mathfrak g}_2$. Take any $(\alpha, \beta) \in {\mathfrak g}_2 \subset
{\rm Hom}({\mathfrak g}_{-1}, {\mathfrak g}_{1})\oplus
{\rm Hom}({\mathfrak g}_{-2}, {\mathfrak g}_{0})$.
Since ${\mathfrak g}_{1} = 0$, $\alpha = 0$. Then we see $\beta(\gamma) = 0$. Therefore $\beta = 0$.
Thus we obtain that ${\mathfrak g}_{2} = 0$.

From ${\mathfrak g}_{1} = 0$, ${\mathfrak g}_{2} = 0$, we have ${\mathfrak g}_{i} = 0$, $i \geq 3$, automatically.
\end{proof}

Then we have

\begin{Th}
\label{maximal symmetry}
Let $(M, D)$ be a contact manifold of dimension $2n+1$
and~$\mathcal{M}$
a bi-decomposable Monge--Amp{\`e}re system
with a Lagrangian pair on~$(M,D)$.
Assume that $n \geq 3$.
Then the automorphism pseudo-group $\operatorname{Aut}(\mathcal{M})$
of~$\mathcal{M}$ has
at most the dimension of
$(n+1)^2$. The estimate is best possible.
\end{Th}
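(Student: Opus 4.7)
The plan is to reduce to a Tanaka-prolongation computation already set up in the preceding paragraphs. Since $n \geq 3$, Theorems~\ref{bi-decomposable-form} and~\ref{Uniqueness} together imply that any $\Phi \in \operatorname{Aut}(\mathcal{M})$ is a contactomorphism preserving the Lagrangian pair $(E_1, E_2)$ (up to the discrete swap $E_1 \leftrightarrow E_2$), and sending a local bi-decomposable generator $\omega$ to $\mu \omega + \theta \wedge \eta$ for some function $\mu$ and $(n-1)$-form $\eta$. Thus $\Phi$ preserves both the conformal symplectic structure on $D$ and the class of $\omega$ modulo the contact ideal; via the proposition identifying bi-decomposable Monge--Amp\`ere systems with $\overline{G}$-structures, $\operatorname{Aut}(\mathcal{M})$ is identified, up to the above discrete ambiguity, with the automorphism pseudo-group of a $\overline{G}$-structure of type $\mathfrak{m}$ on $M$.

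Next I will apply Tanaka's finiteness theorem to bound the dimension. The relevant graded Lie algebra is $\mathfrak{g}^0 = \mathfrak{g}_{-2} \oplus \mathfrak{g}_{-1} \oplus \mathfrak{g}_0$ with $\mathfrak{g}_0 = \langle \varepsilon \rangle_{\mathbf{R}} \oplus \mathfrak{sl}(n,\mathbf{R})$, and the symbol algebra $\mathfrak{m}$ is the split Heisenberg algebra. By Lemma~\ref{prolongation-calculation} the Tanaka prolongation terminates at degree $0$, i.e.\ $\mathfrak{g}_k = 0$ for all $k \geq 1$, so the full prolongation has total dimension
\begin{gather*}
\dim \mathfrak{g}_{-2} + \dim \mathfrak{g}_{-1} + \dim \mathfrak{g}_0 = 1 + 2n + n^2 = (n+1)^2.
\end{gather*}
Tanaka's finiteness theorem then yields $\dim \operatorname{Aut}(\mathcal{M}) \leq (n+1)^2$. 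The key technical input here is precisely the vanishing in Lemma~\ref{prolongation-calculation}; once that is granted, the upper bound follows directly from the general machinery.

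To establish sharpness I will exhibit the flat model realising equality. Take $M = \mathbf{R}^{2n+1}$ with the standard contact form $\theta_{\rm st} = dz - \sum p_i\, dx_i$, the standard Lagrangian pair $(E_1^{\rm st}, E_2^{\rm st})$, and the bi-decomposable form $\omega = c\, dx_1 \wedge \cdots \wedge dx_n - dp_1 \wedge \cdots \wedge dp_n$, whose corresponding equation is $\operatorname{Hess}(f) = c$. The group $G^0$ introduced in the set-up has dimension $1 + (n^2 - 1) + 2n + 1 = (n+1)^2$ and acts on this model by the formula recorded there; the transformation laws $\theta_{\rm st} \mapsto k^2\, \theta_{\rm st}$ and $\omega \mapsto k^n \omega$ ensure that the Monge--Amp\`ere ideal $\langle \theta_{\rm st}, d\theta_{\rm st}, \omega \rangle$ and the Lagrangian pair are both preserved. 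Hence $G^0 \subset \operatorname{Aut}(\mathcal{M})$, and the bound is attained. The only non-routine point in this last paragraph is checking that the $G^0$-action preserves the ideal rather than merely each generator individually, but this follows immediately from the recorded scaling laws.
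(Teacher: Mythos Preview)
Your proof is correct and follows essentially the same route as the paper's own argument: reduce $\operatorname{Aut}(\mathcal{M})$ to automorphisms of the $\overline{G}$-structure via Theorems~\ref{bi-decomposable-form} and~\ref{Uniqueness}, invoke Lemma~\ref{prolongation-calculation} for the vanishing of the Tanaka prolongation in positive degree, apply Tanaka's finiteness theorem to obtain the bound $(n+1)^2$, and exhibit the flat ${\rm Hess}=c$ model with its $(n+1)^2$-dimensional symmetry group $G^0$ for sharpness. The paper's proof points to Section~\ref{Homogeneous M-A systems with Lagrangian pairs.}.1 for the sharpness example, but the group acting there is exactly the $G^0$ you use, so there is no real difference.
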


\begin{proof}
By Lemma \ref{prolongation-calculation},
we have known that the prolongations ${\mathfrak g}_i$, $1 \leq i$ in the sense of Tanaka vanish.
On the other hand $\overline{G}$ is equal to $(H^0)^{\#}$ in the notation of~\cite{Tan}.
Then, by the f\/initeness theorem of Tanaka~\cite[Corollary~2]{Tan}, we see that
the dimension of the pseudo-group of automorphisms on ${\mathcal M}$ is estimated by
$\dim({\mathfrak g^0}) = \sum\limits_{i \leq 0} \dim({\mathfrak g}_i) = (n+1)^2$.
Moreover there exists an Monge--Amp{\`e}re system $\mathcal{M}$ with a Lagrangian pair on
$M = {\mathbf{R}}^{2n+1}$, which arises in equi-af\/f\/ine geometry, such that
the automorphism group {\rm Aut}($\mathcal{M}$) attains the maximal dimension $(n+1)^2$.
See Section~\ref{Homogeneous M-A systems with Lagrangian pairs.}.1.
\end{proof}

\begin{Rem}
The sharp symmetry bounds of non-f\/lat Lagrangian contact structures together with many other parabolic
geometries have been obtained by Kruglikov and The~\cite{K-T}.
Lemma~\ref{prolongation-calculation} in our paper
is similar to the concept of prolongation rigidity studied by them in~\cite{K-T}.
\end{Rem}

\section{Hesse representations}
\label{Hesse representations.}

Let $(E_1,E_2)$ be a Lagrangian pair on a contact manifold
$(M, D)$. We call $(E_1,E_2)$
{\it bi-Legendre-integrable} or simply {\it integrable} if
both~$E_1$ and~$E_2$
are completely integrable as subbundles in~$TM$.
Then $(E_1, E_2)$ def\/ines a pair of Legendrian foliations
$({\mathcal E}_1, {\mathcal E}_2)$ on $M$ locally.
The standard Lagrangian pair $(E^{\rm{st}}_1, E^{\rm{st}}_2)$ introduced in
Section~\ref{Monge-Ampere systems and Lagrangian pairs}
is integrable. In fact the foliation~${\mathcal E}^{\rm{st}}_1$
is def\/ined by f\/ibers of the projection
$(x, z, p) \mapsto \Big(p, \sum\limits_{i=1}^n x_ip_i - z\Big)$
and the foliation ${\mathcal E}^{\rm{st}}_2$
is def\/ined by $(x, z, p) \mapsto (x, z)$.

\begin{Def}
If a Lagrangian pair $(E_1,E_2)$ on $(M, D)$
is locally contactomorphic to the standard Lagrangian pair
$(E^{\rm{st}}_1, E^{\rm{st}}_2)$ on $({\mathbf{R}}^{2n+1}, D_{\rm{st}})$,
then we call $(E_1, E_2)$ {\it flat}.
\end{Def}

Let $(E_1, E_2)$ be a Lagrangian pair
on a contact manifold $(M, D)$ of dimension $2n + 1$.
Assume that $(E_1, E_2)$ is bi-Legendre-integrable.
Then, locally, there exist Legendrian f\/ibrations
\mbox{$\pi_1 \colon  M \to W_1$}
and $\pi_2 \colon  M \to W_2$ having
$E_2$ and $E_1$ as the kernels
of the dif\/ferentials $(\pi_1)_* \colon  TM
\to TW_1$
and $(\pi_2)_* \colon  TM
\to TW_2$
for some manifolds $W_1$ and $W_2$ of dimension $n+1$
respectively.
Then we have the following diagram:
\begin{gather*}
\xymatrix{
& M \ar[ld]_{\pi_1} \ar[rd]^{\pi_2}  &\\
W_1 &  & W_2}
\end{gather*}
We call $(\pi_1, \pi_2)$ a {\it
double Legendrian fibration}.
In this case we say that $W_1$ and $W_2$ are in
the {\it dual} relation via the {\it Legendre transformation} on~$M$.
Since $D = E_1\oplus E_2$,
we see $(\pi_1, \pi_2)\colon M \to W_1\times W_2$
is an immersion. Thus $M$ has a {\it pseudo-product structure}
doubly foliated by Legendrian submanifolds~\cite{Tab, Tan1}.

\begin{Ex}
The projective cotangent bundle
$M = P(T^*W)$ over a manifold $W$ of dimension $n+1$
with the canonical projection $\pi_1 \colon M \to W$
has the canonical contact structure $D \subset TM$
def\/ined by, for any $(x, [\alpha]) \in M$ with $x \in W$, $[\alpha] \in P(T^*_xW)$,
\begin{gather*}
D_{(x, [\alpha])} = \{ v \in T_xM \,|\, \alpha((\pi_1)_* v) = 0 \}.
\end{gather*}
We see moreover that $D$ has the integrable Lagrangian subbundle
$E_2 = {\rm Ker}(\pi_{1*})$.

Assume that $W$ is the projective space~$P^{n+1}$.
Then the contact manifold
$M=P(T^*P^{n+1})$ is naturally identif\/ied with the contact manifold
$P(T^*{P^{n+1}}^*)$, the projective cotangent bundle over
the dual projective space ${P^{n+1}}^*$ with
the canonical projection $\pi_2 \colon P(T^*{P^{n+1}}^*)
\to {P^{n+1}}^*$~\cite{I-Mo}.
Then $D \subset TM$ has another Lagrangian subbundle
$E_1 = {\rm Ker}((\pi_2)_*)$ and $(E_1, E_2)$ turns to be an
integrable Lagrangian pair of $(M, D)$.
There is associated to $(E_1, E_2)$ the double Legendrian f\/ibration
\begin{gather*}
\xymatrix{
& P\big(T^*P^{n+1}\big) \ar[ld]_{\pi_1} \ar[rd]^{\pi_2}  &\\
P^{n+1} &  & {P^{n+1}}^*}
\end{gather*}
for $W_1=P^{n+1}$, $W_2= {P^{n+1}}^*$.
This globally def\/ined Lagrangian pair $(E_1, E_2)$ is f\/lat.
In fact, $P(T^*P^{n+1})$ is identif\/ied with
the incidence hypersurface $I \subset P^{n+1}\times {P^{n+1}}^*$ def\/ined
by
\begin{gather*}
x_0y_0 + x_1y_1 + \cdots + x_{n+1}y_{n+1} = 0
\end{gather*}
for homogeneous coordinates $[x] = [x_0 : x_1 : \cdots : x_{n+1}]$
of $P^{n+1}$ and
$[y] = [y_0 : y_1 : \cdots : y_{n+1}]$
of ${P^{n+1}}^*$. On the af\/f\/ine open subset $U =
\{ x_0 \not= 0, \, y_{n+1}\not= 0\} \subset P^{n+1}\times {P^{n+1}}^*$,
take local coordinates of
\begin{gather*}
x'_i = \frac{x_i}{x_0},
\qquad
z' = - \frac{x_{n+1}}{x_0},
\qquad
p'_j = \frac{y_j}{y_{n+1}},
\qquad
\tilde{z}' = - \frac{y_0}{y_{n+1}},
\end{gather*}
$(1 \leq i \leq n,\ 1 \leq j \leq n)$.
Then $I \cap U$ is def\/ined by
$- z' - \tilde{z}' + \sum\limits_{i=1}^n x'_ip'_i = 0$, where we have
$dz' - \sum\limits_{i=1}^n p'_i dx'_i + \Big(d\tilde{z}' - \sum\limits_{i=1}^n x'_i dp'_i\Big) = 0$.
The contact structure on $I \cap U$ is given by
$dz' - \sum\limits_{i=1}^n p'_i dx'_i = 0$.
Also it is given by $d\tilde{z}' - \sum\limits_{i=1}^n x'_i dp'_i = 0$.
This shows $(E_1, E_2)\vert_{U}$ is f\/lat. Also on other af\/f\/ine open subsets,
we can verify the f\/latness of $(E_1, E_2)$ similarly or by
an argument using homogeneity.
\end{Ex}

\begin{Ex}
\label{hor+ver}
For a Riemannian manifold $W=(W,g)$ of dimension $n+1$,
the unit tangent bundle $M=T_1W$ of $W$ has
the canonical contact structure $D\subset TM$,
\begin{gather*}
D_{(x, v)} = \big\{ u \in T_{(x, v)}M \,|\, \pi_*u \in v^{\perp}\big\}, \qquad (x, v) \in T_1W,
\end{gather*}
and
the canonical Lagrangian pair $(E_1,E_2)$ induced by
the horizontal lift and the vertical lift of the
Levi-Civita connection:
\begin{gather*}
(E_1)_{(x, v)} = \big(v^\perp\big)^{\rm{hor}}, \qquad (E_2)_{(x, v)} =
\big(v^\perp\big)^{\rm{ver}}, \qquad (x, v) \in T_1W.
\end{gather*}
Here $\pi\colon M \longrightarrow W$ is the canonical projection
and $v^{\perp} = \{ w\in T_{x}W \,|\, g(v,w)=0 \}$.
Then $(E_1,E_2)$ is bi-Legendre-integrable if and only if~$W$ is a space form.
In fact, the vertical lift~$E_2$ is always completely integrable.
Moreover the horizontal lift $E_1$ is completely integrable
if and only if~$W$ is projectively f\/lat, that is, a space form (see~\cite[Corollaries~3.5 and~6.5]{Tak}).
\end{Ex}

The bi-decomposable class of Monge--Amp{\`e}re systems with f\/lat Lagrangian pairs
turns out to be an intrinsic representation of
the well-known class of Monge--Amp{\`e}re equations
locally expressed by Hesse representations:
\begin{gather*}
\mathrm{Hess}(z) = F(x_1, \dots, x_n, z, p_1, \dots, p_n)\quad (\not=0).
\end{gather*}
Thus we are led to the following def\/inition:

\begin{Def}
\label{Hesse-M-A-def}
We call a Monge--Amp{\`e}re system with a f\/lat Lagrangian pair
a {\it Hesse Monge--Amp{\`e}re system}.
\end{Def}

Note that there are sub-classes of Hesse Monge--Amp{\`e}re equations,
{\it Euler--Lagrange Monge--Amp{\`e}re equations}:
\begin{gather*}
\mathrm{Hess}(z) = F_1(x_1, \dots, x_n, z)\cdot
F_2\left(p_1,\dots, p_n, \sum_{j=1}^n p_jx_j-z\right)\quad (\not=0),
\end{gather*}
(see \cite[p.~21, Example~2]{B-G-G}) and {\it flat Monge--Amp{\`e}re equations}:
\begin{gather*}
\mathrm{Hess}(z) = c\quad (c \ \text{is constant}, \ c\not=0).
\end{gather*}

\begin{Def}
\label{E-L-M-A-def}
Let $\mathcal{M}$ be a Hesse Monge--Amp{\`e}re system.
Then we call $\mathcal{M}$ an {\it Euler--Lagrange Monge--Amp{\`e}re system}
if $\mathcal{M}$ is locally generated by a bi-decomposable
$n$-form $\omega = \omega_1 - \omega_2$ satisfying
\begin{gather*}
d\omega_1 \equiv 0, \qquad d\omega_2 \equiv 0  \quad \rm{mod}\ \theta
\end{gather*}
\end{Def}

Thus we have a sequence of classes of Monge--Amp{\`e}re systems:
\begin{gather*}
  \text{\{M-A system with Lagrangian pair\} \ $\supset$ \
\{M-A system with integrable Lag. pair\}}\\
\text{$\supset$ \ \{Hesse M-A system\} \ $\supset$ \ \{Euler--Lagrange
M-A system\} \ $\supset$ \ \{f\/lat M-A system\}}.
\end{gather*}

We will study Hesse Monge--Amp{\`e}re systems in detail,
and, in fact, we characterize the above three classes in intrinsic way.
Note that the equation of non-zero
constant Gauss--Kronecker curvature $K = c$ falls into the class of
Euler--Lagrange Monge--Amp{\`e}re systems and the equation of improper af\/f\/ine hyperspheres
${\rm Hess} = c$ falls into the class of f\/lat Monge--Amp{\`e}re systems.

We will show, in Sections~\ref{Homogeneous M-A systems with Lagrangian pairs.}.2--\ref{Homogeneous M-A systems with Lagrangian pairs.}.5,
that Monge--Amp{\`e}re systems def\/ined by
the Gaussian curvature constant equation $K=c$
in ${\mathbf{E}}^{n+1}$, $S^{n+1}$, $H^{n+1}$ are Euler--Lagrange systems.

\begin{Prop}
\label{Hesse-MA}
Let $\mathcal{M}$ be a Hesse Monge--Amp{\`e}re system, i.e.,
a Monge--Amp{\`e}re system
with a flat Lagrangian pair $(E_1,E_2)$ generated by
an $n$-form $\omega = \omega_1-\omega_2$ enjoying
the bi-decomposing condition.
Then $\mathcal{M}$ is locally isomorphic to
a Monge--Amp{\`e}re system $\mathcal{M}'$ on an open subset $U \subset {\mathbf{R}}^{2n+1}$
with the standard Lagrangian pair $(E^{\rm{st}}_1, E^{\rm{st}}_2)$
which is locally generated by an $n$-form of type
\begin{gather*}
F(x_1, \dots, x_n, z, p_1, \dots, p_n) dx_1\wedge\cdots\wedge dx_n - dp_1\wedge\cdots\wedge dp_n
\end{gather*}
for a non-vanishing function $F$ on $U$. In particular,
there exists a system of local Darboux coordinates
$(x_1, \dots, x_n, z, p_1, \dots, p_n)$
in some neighborhood of each point, such that
$\mathcal{M}$ is represented by
a Hesse Monge--Amp{\`e}re equation of the form
\begin{gather*}
\mathrm{Hess}(z) = F(x_1, \dots, x_n, z, p_1, \dots, p_n)
\qquad (F\not=0).
\end{gather*}
\end{Prop}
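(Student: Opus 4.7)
The plan is to exploit the flatness of $(E_1, E_2)$ to reduce to the Darboux model, identify the canonical shape of the generator using the bi-decomposing condition in coordinates, and finally recognise the Hesse equation via a Legendrian graph parametrisation. First, by the definition of flatness, around each point of $M$ there is a local contactomorphism $\Phi$ onto an open set $U \subset {\mathbf{R}}^{2n+1}$ carrying $(D, E_1, E_2)$ to the standard triple $(D_{\mathrm{st}}, E_1^{\mathrm{st}}, E_2^{\mathrm{st}})$. Pushing $\mathcal{M}$ forward via $\Phi$ reduces everything to Darboux coordinates $(x_i, z, p_j)$ with $\theta_{\mathrm{st}} = dz - \sum_i p_i\, dx_i$, and the bi-decomposition of $\omega$ transfers correspondingly.

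Next, I work in the adapted basis $\{dx_1, \ldots, dx_n, \theta_{\mathrm{st}}, dp_1, \ldots, dp_n\}$ of $T^*U$, observing that $i_{\partial/\partial p_j}$ annihilates each $dx_i$ and $\theta_{\mathrm{st}}$ and sends $dp_k$ to $\delta_{jk}$. The bi-decomposing condition $i_u \omega_1 = 0$ for $u \in E_2^{\mathrm{st}} = \langle \partial/\partial p_j\rangle$ then forces $\omega_1$ to carry no $dp_j$-factor, giving
\[
\omega_1 = F\, dx_1 \wedge \cdots \wedge dx_n + \theta_{\mathrm{st}} \wedge \eta_1
\]
for some function $F$ and $(n-1)$-form $\eta_1$, and the volume-form condition on $E_1^{\mathrm{st}}$ yields $F \neq 0$. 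A symmetric analysis with $X_i = \partial/\partial x_i + p_i \partial/\partial z \in E_1^{\mathrm{st}}$, using $i_{X_i}(dx_j) = \delta_{ij}$, $i_{X_i}(\theta_{\mathrm{st}}) = 0$, $i_{X_i}(dp_j) = 0$, produces $\omega_2 = A\, dp_1 \wedge \cdots \wedge dp_n + \theta_{\mathrm{st}} \wedge \eta_2$ with $A \neq 0$. Since a local generator of a Monge--Amp\`ere system is defined only up to multiplication by a non-vanishing function and addition of elements of the contact ideal $\langle \theta, d\theta\rangle$, I may replace $\omega_1 - \omega_2$ by $(F/A)\, dx_1 \wedge \cdots \wedge dx_n - dp_1 \wedge \cdots \wedge dp_n$, establishing the asserted canonical form.

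Finally, any Legendrian immersion is locally a graph $z = z(x)$, $p_i = \partial z/\partial x_i$, and pulling this back sends $dp_1 \wedge \cdots \wedge dp_n$ to $\mathrm{Hess}(z)\, dx_1 \wedge \cdots \wedge dx_n$; hence the pulled-back generator equals $(F - \mathrm{Hess}(z))\, dx_1 \wedge \cdots \wedge dx_n$, so geometric solutions correspond to functions $z(x)$ satisfying the Hesse equation $\mathrm{Hess}(z) = F(x, z, p)$ with $F$ non-vanishing. The only real obstacle is the bookkeeping in the second step: one must use a basis of $T^*U$ that replaces $dz$ by $\theta_{\mathrm{st}}$ so that the bi-decomposing condition becomes transparent, and carefully track the freedom to rescale and to work modulo $\theta$. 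The rest of the argument is routine.
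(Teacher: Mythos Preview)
Your proof is correct and follows essentially the same route as the paper's: use flatness to pass to Darboux coordinates with the standard Lagrangian pair, then read off from the bi-decomposing condition that $\omega_1 \equiv f\,dx_1\wedge\cdots\wedge dx_n$ and $\omega_2 \equiv g\,dp_1\wedge\cdots\wedge dp_n$ modulo $\theta$ with $f,g$ non-vanishing, and set $F=f/g$. One small caveat in your final paragraph: not every Legendrian immersion is locally a graph $z=z(x)$ (the $\pi_1$-fibres themselves are Legendrian), so that sentence should be qualified to Legendrian submanifolds transverse to $E_2^{\mathrm{st}}$; this does not affect the proof of the proposition itself.
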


\begin{proof}
Since $(E_1,E_2)$ is f\/lat, around each point of~$M$,
there exists a system
of local coordinates
$
(x_1,\dots,x_n,z,p_1,\dots,p_n)
$
such that
$D =\{ \theta = 0 \}$, $\theta=dz-\sum\limits_{i=1}^np_idx_i$, and that
\begin{gather*}
E_1 =\mathrm{Ker}({\pi}_{2*})=\left\langle \frac{\partial}{\partial x_1}+
p_1\frac{\partial}{\partial z},
\dots,\frac{\partial}{\partial x_n}+p_n\frac{\partial}{\partial z}\right\rangle,
\\
E_2 =\mathrm{Ker}({\pi}_{1*})=\left\langle \frac{\partial}{\partial p_1},\dots,
\frac{\partial}{\partial p_n}\right\rangle,
\\
{\pi}_1 \colon  \  {\mathbf{R}}^{2n+1} \to {\mathbf{R}}^{n+1}, \qquad \pi_1(x_1, \dots, x_n, z,
p_1, \dots, p_n)=(x_1, \dots, x_n, z),
\\
{\pi}_2 \colon   \ {\mathbf{R}}^{2n+1} \to {\mathbf{R}}^{n+1},
\qquad
{\pi}_2(x_1, \dots, x_n, z, p_1, \dots, p_n) =
\left(p_1, \dots, p_n, \sum_{i=1}^np_ix_i-z\right).
\end{gather*}
This means that $\mathcal{M}$ is locally isomorphic to
a Monge--Amp{\`e}re system with the standard Lag\-rangian pair.
Then, from the bi-decomposing condition,
we have, setting $x = (x_1, \dots, x_n)$, $p = (p_1, \dots, p_n)$,
\begin{gather*}
\omega  = \omega_1-\omega_2
 =f(x,z,p)dx_1\wedge \cdots \wedge dx_n -
g(x,z,p)dp_1\wedge \cdots \wedge dp_n
\end{gather*}
for some functions $f(x,z,p)$, $g(x,z,p)$ $(\not=0)$ on
${\mathbf{R}}^{2n+1}$.
Therefore, putting $F=f/g$,
we have a~form $\mathrm{Hess}(z) = F(x,z,p)$.
\end{proof}

\begin{Prop}
\label{E-L-MA}
Let $\mathcal{M}$ be a Hesse Monge--Amp{\`e}re system.
Then $\mathcal{M}$ is an Euler--Lagrange Monge--Amp{\`e}re system
if and only if
$\mathcal{M}$ is locally isomorphic to a Monge--Amp{\`e}re system $\mathcal{M}'$
on an open subset $U \subset {\mathbf{R}}^{2n+1}$ with the standard Lagrangian pair generated by an $n$-form
of type
\begin{gather*}
F_1(x_1, \dots, x_n, z)\cdot F_2(\widetilde{z}, p_1, \dots, p_n)
dx_1\wedge\cdots\wedge dx_n - dp_1\wedge\cdots\wedge dp_n
\end{gather*}
for some non-vanishing functions $F_1$ of~$x$,~$z$ and $F_2$ of $\widetilde{z} = \sum\limits_{j=1}^n p_jx_j-z$
and~$p$.
In particular, ${\mathcal M}$~is locally represented as
\begin{gather*}
\mathrm{Hess}(z) =
F_1(x_1, \dots, x_n, z)\cdot F_2(\widetilde{z}, p_1, \dots, p_n)
\qquad
(F_1,\ F_2\not=0).
\end{gather*}
\end{Prop}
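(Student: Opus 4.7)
The plan is to work in the Darboux coordinates provided by Proposition~\ref{Hesse-MA}: up to isomorphism, $\mathcal{M}$ is generated on an open subset of $\mathbf{R}^{2n+1}$ by a bi-decomposable form with $\omega_1 = f(x,z,p)\, dx_1 \wedge \cdots \wedge dx_n$ and $\omega_2 = g(x,z,p)\, dp_1 \wedge \cdots \wedge dp_n$ adapted to the standard flat Lagrangian pair, where $f, g$ are non-vanishing and $F = f/g$. By Theorem~\ref{bi-decomposable-form} together with the uniqueness of the bi-decomposition (Proposition~\ref{uniqueness 2}), every other bi-decomposable local generator of $\mathcal{M}$ differs from this one, modulo the contact form, by multiplying both pieces by a common non-vanishing scalar $\mu$. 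Hence the Euler--Lagrange condition amounts to asking whether there is a choice of $\mu$ such that both $d(\mu f\, dx_1 \wedge \cdots \wedge dx_n) \equiv 0$ and $d(\mu g\, dp_1 \wedge \cdots \wedge dp_n) \equiv 0 \pmod{\theta}$.

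For the forward direction, I would compute these two conditions explicitly. Using $\theta = dz - \sum p_i\, dx_i$ so that $dz \equiv \sum p_i\, dx_i \pmod{\theta}$, the $dx_i$- and $dz$-terms of $d(\mu f)$ disappear upon wedging with $dx_1 \wedge \cdots \wedge dx_n$, leaving $\sum_i \partial_{p_i}(\mu f)\, dp_i \wedge dx_1 \wedge \cdots \wedge dx_n$; vanishing forces $\mu f$ to be a function of $(x, z)$ alone. A parallel calculation reduces the second condition to $(\partial_{x_i} + p_i \partial_z)(\mu g) = 0$ for all $i$, which says $\mu g$ is constant along the integral leaves of $E_1 = \langle \partial_{x_i} + p_i \partial_z \rangle$. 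Since $\tilde z = \sum_j p_j x_j - z$ and $p_1, \dots, p_n$ are functionally independent first integrals of those vector fields, $\mu g$ must be a function of $(\tilde z, p)$ alone. Setting $F_1 := \mu f$ and $F_2 := 1/(\mu g)$ yields $F = F_1(x, z)\, F_2(\tilde z, p)$.

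For the converse, given the product form $F = F_1(x, z)\, F_2(\tilde z, p)$, I would rescale the Hesse generator by $1/F_2$ to produce the candidate bi-decomposition $\omega_1 = F_1(x, z)\, dx_1 \wedge \cdots \wedge dx_n$ and $\omega_2 = (1/F_2(\tilde z, p))\, dp_1 \wedge \cdots \wedge dp_n$, which lies in the same Monge--Amp\`ere ideal. The first Euler--Lagrange condition is immediate from $d\omega_1 = (\partial_z F_1)\, dz \wedge dx_1 \wedge \cdots \wedge dx_n \equiv 0 \pmod{\theta}$, while the second follows from $d\tilde z \equiv \sum_j x_j\, dp_j \pmod{\theta}$, which expresses $d(1/F_2)$ modulo $\theta$ as a combination of $dp_j$'s and therefore kills it after wedging with $dp_1 \wedge \cdots \wedge dp_n$.

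The main subtlety lies in managing the scaling freedom coherently: the individual conditions $\mu f \in C^\infty(x, z)$ and $\mu g \in C^\infty(\tilde z, p)$ each leave ample flexibility, but their simultaneous solvability is precisely the requirement that $F$ separate in the variable partition $(x, z) \sqcup (\tilde z, p)$ determined by the two Lagrangian foliations. Recognizing that the first integrals of $E_1$ are given by exactly $\tilde z$ together with $p_1, \dots, p_n$, and that this functional partition corresponds dually to the Legendre fibration $\pi_2\colon (x, z, p) \mapsto (p, \tilde z)$, is the geometric content that makes the calculation go through.
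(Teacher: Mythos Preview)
Your argument is essentially the paper's own: work in the flat Darboux chart from Proposition~\ref{Hesse-MA}, write the two pieces of the bi-decomposition as $f\,dx_1\wedge\cdots\wedge dx_n$ and $g\,dp_1\wedge\cdots\wedge dp_n$, and compute $d\omega_1$, $d\omega_2$ modulo $\theta$ (the second via the dual coordinate $\tilde z$ and the relation $\theta=-(d\tilde z-\sum x_i\,dp_i)$) to force $f=f(x,z)$ and $g=g(\tilde z,p)$; the converse is the same rescaling you describe.

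The one organizational difference is that the paper starts from the assumed Euler--Lagrange bi-decomposition itself and reads off $f$, $g$ directly, whereas you start from the Hesse generator and search for a common scalar $\mu$, invoking Theorem~\ref{bi-decomposable-form} and Proposition~\ref{uniqueness 2} to pin down the freedom. That extra step is logically reasonable but note that Proposition~\ref{uniqueness 2} needs $n\geq 3$, while the statement and the paper's computation make no such restriction; the paper simply uses that in the flat chart any bi-decomposition compatible with $(E_1^{\mathrm{st}},E_2^{\mathrm{st}})$ already has pieces of the shape $f\,dx_{1..n}$ and $g\,dp_{1..n}$ (up to~$\theta$), so the uniqueness machinery is not required.
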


\begin{proof}
By the assumption, we can set
\begin{gather*}
\omega_1 = f(x,z,p)dx_1\wedge \cdots \wedge dx_n,
\qquad
\omega_2 =
g(x,z,p)dp_1\wedge \cdots \wedge dp_n.
\end{gather*}
Since
\begin{gather*}
d\omega_1   =df\wedge dx_1\wedge \cdots \wedge dx_n
=\left(\sum_{i=1}^n\frac{\partial f}{\partial x_i}dx_i+\frac{\partial f}{\partial z}dz+
\sum_{i=1}^n\frac{\partial f}{\partial p_i}dp_i\right)\wedge dx_1\wedge \cdots \wedge dx_n \\
\hphantom{d\omega_1}{} \equiv \sum_{i=1}^n\frac{\partial f}{\partial p_i} dp_i\wedge dx_1\wedge \cdots
\wedge dx_n,  \quad \rm{mod}\ \theta,
\end{gather*}
we see $d\omega_1 \equiv 0 \ \rm{mod}\ \theta$ if and only if
$f$ is independent of $p_1, \dots, p_n$:
$f=f(x_1, \dots, x_n,z)$.
Besides, we take another system of local coordinates $(x, \tilde{z}, p)$
with $\tilde{z} = \sum\limits_{i=1}^n x_ip_i - z$. Then
$\theta = - \Big(d\tilde{z} - \sum\limits_{i=1}^n x_idp_i\Big)$ and we have
\begin{gather*}
d\omega_2   =
dg\wedge dp_1\wedge \cdots \wedge dp_n
 =
\left(\sum_{i=1}^n\frac{\partial g}{\partial x_i}dx_i +\frac{\partial g}{\partial \tilde{z}}d\tilde{z}+
\sum_{i=1}^n\frac{\partial g}{\partial p_i}dp_i\right)
\wedge dp_1\wedge \cdots \wedge dp_n \\
\hphantom{d\omega_2}{}
 \equiv
\sum_{i=1}^n\frac{\partial g}{\partial x_i}
dx_i \wedge dp_1\wedge \cdots \wedge dp_n, \quad \rm{mod}\ \theta.
\end{gather*}
Therefore we see $d\omega_2 \equiv 0 \ \rm{mod}\ \theta$ if and only if~$g$ is independent of~$x$ for the system of local coordinates
$(x, \tilde{z}, p)$:
$g = g(\widetilde{z}, p_1, \dots, p_n)$.
Therefore putting $F_1 = f$, $F_2 = 1/g$,
we have a form
$\mathrm{Hess}(z) = F_1(x, z)\cdot F_2(\widetilde{z}, p)$.
\end{proof}

By Theorem \ref{Uniqueness}, we have the following:

\begin{Prop}
\label{contact invariance}
Let $n \geq 3$. Then Definition~{\rm \ref{Hesse-M-A-def}} $($resp.\ Definition~{\rm \ref{E-L-M-A-def})}
depends only on the Monge--Amp{\`e}re system and
does not depend on the choice of Lagrangian pairs of the Monge--Amp{\`e}re system.
The class of Hesse Monge--Amp{\`e}re systems $($resp.\
the class of Euler--Lagrange Monge--Amp{\`e}re systems$)$
is invariant under contact transformations.
\end{Prop}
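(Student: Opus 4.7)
The plan is to deduce Proposition~\ref{contact invariance} essentially from the uniqueness results of Section~\ref{Lagrangian pair and bi-decomposable forms.}, together with straightforward pullback arguments.

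First, I would handle the claim that the Hesse property is intrinsic to $\mathcal{M}$. Suppose $\mathcal{M}$ admits bi-decomposable generators with respect to two Lagrangian pairs $(E_1, E_2)$ and $(E_1', E_2')$. By Theorem~\ref{Uniqueness}, since $n \geq 3$, these pairs must coincide up to ordering: either $(E_1', E_2') = (E_1, E_2)$ or $(E_1', E_2') = (E_2, E_1)$. Flatness, defined via local contactomorphism to the standard model $(E^{\rm{st}}_1, E^{\rm{st}}_2)$ on $({\mathbf{R}}^{2n+1}, D_{\rm{st}})$, is symmetric in the two components: the Legendre-type contactomorphism of ${\mathbf{R}}^{2n+1}$ exchanging $E^{\rm{st}}_1$ and $E^{\rm{st}}_2$ shows that $(E_2, E_1)$ is flat whenever $(E_1, E_2)$ is. Hence the Hesse property depends only on $\mathcal{M}$.

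Second, I would address the Euler--Lagrange property. Starting from a bi-decomposable generator $\omega = \omega_1 - \omega_2$ satisfying $d\omega_1 \equiv 0$ and $d\omega_2 \equiv 0$ modulo $\theta$ for a Lagrangian pair $(E_1, E_2)$, suppose we interchange the roles of the two subbundles. By Proposition~\ref{uniqueness 2}, the bi-decomposition of the same form $\omega$ with respect to $(E_2, E_1)$ is $\omega_1' = -\omega_2$, $\omega_2' = -\omega_1$, so $d\omega_1' \equiv 0$ and $d\omega_2' \equiv 0$ modulo $\theta$ follow immediately. Combined with Step~1, this shows that the Euler--Lagrange property is insensitive to the (only) residual choice, and hence depends only on $\mathcal{M}$.

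Third, I would verify invariance under contact transformations. Let $\Phi \colon (M, D) \to (M', D')$ be a contactomorphism, so $\Phi^*\theta' = \lambda\theta$ for some non-vanishing function $\lambda$, and let $\mathcal{M}' = \langle \theta', d\theta', \omega'\rangle$ be Hesse (resp.\ Euler--Lagrange) with bi-decomposable generator $\omega' = \omega_1' - \omega_2'$ for a flat Lagrangian pair $(E_1', E_2')$ of $(M', D')$. The pullback $\Phi^*\omega' = \Phi^*\omega_1' - \Phi^*\omega_2'$ is then a bi-decomposable generator of $\Phi^*\mathcal{M}'$ with respect to the Lagrangian pair $(\Phi^{-1}_*E_1', \Phi^{-1}_*E_2')$ on $(M, D)$, and this pulled-back pair is flat because composition of $\Phi$ with a local contactomorphism from $(E_1', E_2')$ to the standard pair is itself a local contactomorphism from the pulled-back pair to the standard pair. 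In the Euler--Lagrange case, $d(\Phi^*\omega_i') = \Phi^*(d\omega_i')$ lies in the contact ideal $\langle\theta, d\theta\rangle$ since $d\omega_i'$ lies in $\langle\theta', d\theta'\rangle$ and $\Phi$ pulls one ideal to the other.

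The main conceptual input, underpinning both claims, is the combination of Theorem~\ref{Uniqueness} and Proposition~\ref{uniqueness 2}, which give both the intrinsic character of the Lagrangian pair and the predictable action of the swap on bi-decompositions. The only step that requires a bit of care is verifying the swap symmetry in Step~2; everything else is a routine pullback calculation. Without the uniqueness results the proposition would fail, as Remark~\ref{n=2} indicates for $n=2$.
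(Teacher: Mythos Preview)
Your proposal is correct and follows essentially the same route as the paper: invoke Theorem~\ref{Uniqueness} to reduce the choice of Lagrangian pair to at most a swap, observe that flatness and the closedness condition are symmetric under that swap, and then push everything through a contactomorphism by pullback. You are in fact more explicit than the paper about the swap step, invoking Proposition~\ref{uniqueness 2} to identify $(\omega_1',\omega_2')=(-\omega_2,-\omega_1)$ and noting the Legendre involution exchanges $E_1^{\rm st}$ and $E_2^{\rm st}$; the paper simply asserts these points. One small imprecision: in your Step~3 you write that $d\omega_i'$ lies in the contact ideal $\langle\theta',d\theta'\rangle$, but the Euler--Lagrange condition is $d\omega_i'\equiv 0\ \mathrm{mod}\ \theta'$ alone, i.e., $d\omega_i'=\theta'\wedge\eta_i'$; since $\Phi^*\theta'=\lambda\theta$, this still yields $d(\Phi^*\omega_i')\equiv 0\ \mathrm{mod}\ \theta$, so your conclusion stands.
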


\begin{proof}
Let ${\mathcal M}$ be a Monge--Amp{\`e}re system with a Lagrangian pair on a contact manifold
$(M, D)$.
Let $(E_1, E_2)$ and $(E_1', E_2')$ be two Lagrangian pairs associated to ${\mathcal M}$.
Then, by Theorem~\ref{Uniqueness}, $E_1' = E_1$, $E_2' = E_2$ or $E_1' = E_2$, $E_2' = E_1$.
Therefore the f\/latness of Lagrangian pair depends only on~${\mathcal M}$.
Moreover it is clear that, the condition of Def\/inition~\ref{E-L-M-A-def},
i.e., the possibility of a bi-decomposition $\omega =
\omega_1 - \omega_2$ of a local generator~$\omega$ into closed decomposable
forms $\omega_1$, $\omega_2$ up to a contact form~$\theta$ depends only on~${\mathcal M}$.

Let $\Phi \colon  (M, D) \to (M', D')$ be a contact transformation between~$(M, D)$ and
another contact manifold $(M', D')$. Set ${\mathcal M}' = {\Phi^{-1}}^*{\mathcal M}$.
Then ${\mathcal M}'$ is a Monge--Amp{\`e}re system with the Lagrangian pair
$(\Phi_*E_1, \Phi_*E_2)$. If $(E_1, E_2)$ is f\/lat, then so is $(\Phi_*E_1, \Phi_*E_2)$.
Moreover if $\omega = \omega_1 - \omega_2$ is a bi-decomposition satisfying
the condition of Def\/inition~\ref{E-L-M-A-def} for ${\mathcal M}$, then
$({\Phi^{-1}})^*\omega = ({\Phi^{-1}})^*\omega_1 - ({\Phi^{-1}})^*\omega_2$
satisf\/ies the condition of Def\/inition \ref{E-L-M-A-def} for~${\mathcal M}'$.
\end{proof}

\begin{Rem}
\label{Poincare-Cartan form}
The Euler--Lagrange systems are studied in \cite{B-G-G} via the key notion ``Poincar\'{e}--Cartan form''.
If an Euler--Lagrange Monge--Amp{\`e}re system with Lagrangian pair
is given by
\begin{gather*}
\theta = dz - \sum_{i=1}^n p_i dx_i = - \left(d\tilde{z} - \sum_{i=1}^n x_i dp_i\right), \\
\omega = f(x, z) dx_1 \wedge \cdots \wedge dx_n - g(p, \tilde{z}) dp_1\wedge \cdots \wedge dp_n,
\end{gather*}
then the Poincar\'{e}--Cartan form is given by
\begin{gather*}
\Pi = \theta\wedge\omega = f(x, z) dz \wedge dx_1 \wedge \cdots \wedge dx_n
- g(p, \tilde{z}) d\tilde{z} \wedge dp_1\wedge \cdots \wedge dp_n.
\end{gather*}
\end{Rem}

To conclude this section,
we characterize, among others, the class of equations $\mathrm{Hess} = c$
in term of the projective structure:

\begin{Prop}
Let $\mathcal{M}$ be an Euler--Lagrange Monge--Amp{\`e}re system
on $M=P(T^*P^{n+1})$ induced by the diagram
\begin{gather*}
\xymatrix{
& P\big(T^*P^{n+1}\big) \ar[ld]_{\pi_1} \ar[rd]^{\pi_2}  &\\
P^{n+1} &  & {P^{n+1}}^*}
\end{gather*}
and $W_1=P^{n+1}$, $W_2={P^{n+1}}^* $,
generated by a bi-decomposable
$n$-form $\omega = \omega_1-\omega_2$.
Then the condition
\begin{gather*}
\nabla \omega_1=0, \quad \nabla \omega_2=0
\end{gather*}
is satisfied for the covariant derivative $\nabla$ of
the flat connection induced on each local projective chart
of~$W_1=P^{n+1}$
if and only if~${\mathcal M}$ is represented by
a Monge--Amp{\`e}re equation
\begin{gather*}
\mathrm{Hess}(z) = c \quad (c \ \text{\rm is constant}, \ c\not=0).
\end{gather*}
\end{Prop}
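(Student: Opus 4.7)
The strategy is to reduce the question to a local computation in adapted Darboux coordinates and then to check the parallelism condition directly. By Proposition~\ref{Hesse-MA}, flatness of the Lagrangian pair lets us pick local coordinates $(x_1,\dots,x_n,z,p_1,\dots,p_n)$ on $M$ in which $D=\{\theta=0\}$ with $\theta=dz-\sum p_i dx_i$ and the Lagrangian pair is the standard one. By Proposition~\ref{E-L-MA}, the Euler--Lagrange hypothesis then forces a bi-decomposition of the form
\begin{gather*}
\omega_1 = F_1(x,z)\, dx_1\wedge\cdots\wedge dx_n,\qquad
\omega_2 = G(\widetilde z,p)\, dp_1\wedge\cdots\wedge dp_n,
\end{gather*}
with $\widetilde z=\sum x_ip_i-z$ and $F_1,G$ non-vanishing; the corresponding Monge--Amp\`ere equation is $\mathrm{Hess}(z)=F_1(x,z)\cdot F_2(\widetilde z,p)$ with $F_2=1/G$.

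Next I would match these Darboux coordinates with the projective charts of $W_1=P^{n+1}$ and $W_2={P^{n+1}}^*$. The coordinates $(x_1,\dots,x_n,z)$ are precisely affine coordinates on a chart of $W_1$ (and $\omega_1$ descends to $W_1$ via $\pi_1$, being pulled back by $\pi_1$ from a form on the chart); similarly $(p_1,\dots,p_n,\widetilde z)$ are affine coordinates on a chart of $W_2$ and $\omega_2=\pi_2^*(\cdots)$. The flat connection associated with the projective chart of $W_1$ has $dx_1,\dots,dx_n,dz$ parallel, so
\begin{gather*}
\nabla\omega_1 = dF_1\otimes dx_1\wedge\cdots\wedge dx_n,
\end{gather*}
which vanishes if and only if $F_1$ is (locally) a non-zero constant $c_1$. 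Analogously, using the flat connection of the projective chart of $W_2$, parallelism of $\omega_2$ is equivalent to $G$ being a non-zero constant, i.e.\ $F_2$ being a non-zero constant $c_2$.

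Combining these two observations gives $\mathrm{Hess}(z)=c_1c_2=c$, a non-zero constant, as required. Conversely, if $\mathrm{Hess}(z)=c$ with $c\neq 0$, we may take the canonical bi-decomposition $\omega_1=c\, dx_1\wedge\cdots\wedge dx_n$, $\omega_2=dp_1\wedge\cdots\wedge dp_n$, which are obviously parallel on the respective charts. The main point to handle carefully is the first step: one must verify that the local Darboux coordinates provided by flatness of the Lagrangian pair can be chosen so that $(x_1,\dots,x_n,z)$ are honest affine coordinates on a chart of $W_1$ (rather than arbitrary Darboux coordinates), so that the flat connection on $W_1$ really has $dx_i$ and $dz$ parallel; this is the only delicate identification, and it follows from the construction of $\pi_1$ as the canonical projection $P(T^*W_1)\to W_1$ together with the observation that the leaves of $\mathcal E_2$ are the fibres of $\pi_1$. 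Once this identification is in place, the rest is a direct calculation.
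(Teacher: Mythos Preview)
Your proposal is correct and follows essentially the same approach as the paper's own proof: both reduce to local Darboux coordinates adapted to the flat Lagrangian pair, write $\omega_1=f\,dx_1\wedge\cdots\wedge dx_n$ and $\omega_2=g\,dp_1\wedge\cdots\wedge dp_n$, and observe that parallelism with respect to the flat connection on the affine chart is equivalent to $df=0$ (resp.\ $dg=0$), forcing the equation $\mathrm{Hess}(z)=c$. The paper's proof is a one-line reference back to the setup of Proposition~\ref{Hesse-MA}; you are simply more explicit about invoking Proposition~\ref{E-L-MA} and about the identification of the Darboux chart with the affine chart on $W_1$ and $W_2$, which is exactly the right point to be careful about.
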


\begin{proof}
From the equivalence between the conditions
$\nabla \omega_1=0$ and $df=0$
(resp.\ $\nabla \omega_2=0$ and $dg=0$) in the proof of
Proposition~\ref{Hesse-MA},
we have that $f$, $g$ are non-zero constants.
Therefore we have the form $\mathrm{Hess}(z) = c$ $(\not=0)$.
\end{proof}

\section[A method to construct Monge-Amp{\`e}re systems
with Lagrangian pairs]{A method to construct Monge--Amp{\`e}re systems\\
with Lagrangian pairs}
\label{A method to construct systems with Lagrangian pairs.}

Let $(M, D)$ be a contact manifold of dimension $2n+1$ and
$(E_1, E_2)$ a Lagrangian pair on $(M, D)$.
Consider the quotient bundle $TM/E_2$ (resp.~$TM/E_1$) of rank~$n+1$
and a~section~$\omega'_1$ (resp.~$\omega'_2$)
to the line bundle
$\wedge^{n+1}(TM/E_2)^*$ (resp.~$\wedge^{n+1}(TM/E_1)^*$) of\/f
the zero-section.
Let $\theta$ be a local contact form def\/ining $D$.
Recall that the Reeb vector f\/ield $R = R_{\theta}$ is def\/ined by the condition
$i_{R_\theta} \theta = 1$, $i_{R_\theta} d\theta = 0$.
Then we def\/ine an $n$-form on $M$ by
\begin{gather*}
\omega = i_{R_\theta} \big(\Pi_1^*\omega'_1 - \Pi_2^*\omega'_2\big).
\end{gather*}
Here $\Pi_1\colon TM \to TM/E_2$ (resp.\ $\Pi_2\colon TM \to TM/E_1$)
denotes the bundle projection, and
$\Pi_1^*\colon (TM/E_2)^* \to T^*M$ and $\Pi_1^*\colon \wedge^{n+1}(TM/E_2)^* \to
\wedge^{n+1}T^*M$
(resp.\ $\Pi_2^*\colon (TM/E_1)^* \to T^*M$ and $\Pi_2^*\colon
\wedge^{n+1}(TM/E_1)^* \to \wedge^{n+1}T^*M$)
its dual injections.
Then we have the following basic lemma for our construction:

\begin{Lem}
\label{MA-system construction}
The differential system $\mathcal{M} = \langle \theta, d\theta,
\omega \rangle$, generated by $\omega$ and the contact form $\theta$,
is independent of the choice of $\theta$,
and depends only on given $\omega'_1$, $\omega'_2$.
\end{Lem}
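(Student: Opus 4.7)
The plan is to compare the $n$-form $\omega$ built from two different contact forms $\theta$ and $\theta' = f\theta$, where $f$ is a nowhere-vanishing function, and to show that the resulting $n$-forms $\omega$, $\omega'$ differ by an element of the contact ideal $\langle\theta, d\theta\rangle$ up to multiplication by $1/f$. Since $\theta'$ and $\theta$ generate the same contact ideal, this is enough to conclude that $\langle\theta, d\theta, \omega\rangle = \langle\theta', d\theta', \omega'\rangle$.

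The first key observation is that the pullbacks $\Pi_1^*\omega'_1$ and $\Pi_2^*\omega'_2$ are automatically divisible by $\theta$. Indeed, since $E_2 \subset D = \ker\theta$, the $1$-form $\theta$ is a section of the annihilator $E_2^{\circ} \subset T^*M$, which under the identification $(TM/E_2)^* \cong E_2^{\circ}$ has rank $n+1$. The line bundle $\wedge^{n+1} E_2^{\circ}$ is therefore locally generated by $\theta \wedge \alpha_1 \wedge \cdots \wedge \alpha_n$, once $\theta$ is completed to a frame of $E_2^{\circ}$ by $1$-forms $\alpha_1,\dots,\alpha_n$. Hence I may write $\Pi_1^*\omega'_1 = \theta \wedge \sigma_1$ for some $n$-form $\sigma_1$ (unique modulo $\theta$), and symmetrically $\Pi_2^*\omega'_2 = \theta \wedge \sigma_2$.

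Using the Reeb characterization $i_{R_\theta}\theta = 1$, $i_{R_\theta}d\theta = 0$, and the Leibniz rule for interior product, I then compute
\begin{gather*}
\omega \;=\; i_{R_\theta}\bigl(\theta \wedge (\sigma_1 - \sigma_2)\bigr) \;=\; (\sigma_1 - \sigma_2) - \theta \wedge i_{R_\theta}(\sigma_1 - \sigma_2),
\end{gather*}
so $\omega \equiv \sigma_1 - \sigma_2 \pmod{\theta}$. Now replace $\theta$ by $\theta' = f\theta$. The equality $\Pi_1^*\omega'_1 = \theta \wedge \sigma_1 = \theta' \wedge \sigma'_1$ forces $\sigma'_1 \equiv f^{-1}\sigma_1 \pmod{\theta}$, and similarly for $\sigma_2$. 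Applying the same computation with the Reeb field $R_{\theta'}$ gives $\omega' \equiv f^{-1}(\sigma_1 - \sigma_2) \equiv f^{-1}\omega \pmod{\theta}$, that is, $\omega' = f^{-1}\omega + \theta \wedge \eta$ for some $(n-1)$-form $\eta$. Since $\langle\theta', d\theta'\rangle = \langle\theta, d\theta\rangle$ (because $d(f\theta) = df\wedge\theta + fd\theta$), it follows that $\omega'$ belongs to $\langle\theta, d\theta, \omega\rangle$ and vice versa, proving the desired independence.

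The only mild subtlety is the divisibility step, but once one notices that $\wedge^{n+1}E_2^{\circ}$ is a line bundle containing $\theta$ as part of a local frame of $E_2^{\circ}$, it is automatic; the rest of the argument is a routine manipulation of interior products and the Reeb axioms. No further choice intervenes in the definition of $\mathcal M$, so the system depends only on $(\omega'_1, \omega'_2)$.
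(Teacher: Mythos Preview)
Your argument is correct and follows the same underlying idea as the paper: write $\Pi_1^*\omega'_1$ and $\Pi_2^*\omega'_2$ as $\theta$ times an $n$-form, then observe that contracting with a Reeb field simply strips off the $\theta$ factor modulo $\theta$, so changing the contact form rescales $\omega$ by $1/f$ modulo $\theta$.

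The one noteworthy difference is in execution. The paper first proves an auxiliary lemma (Lemma~\ref{Reeb}) giving the explicit expression
\[
R_{\theta'} = \frac{1}{\rho^2}\Bigl[\sum_{i}(P_i\rho)X_i - \sum_i(X_i\rho)P_i + \rho R_\theta\Bigr]
\]
in an adapted symplectic frame, and then applies this formula to the fixed decomposition $\Pi_1^*\omega'_1 = \lambda\,\theta\wedge\alpha_1\wedge\cdots\wedge\alpha_n$ to read off the $\pmod{\theta}$ behaviour. You bypass this entirely: by rewriting $\Pi_1^*\omega'_1 = \theta'\wedge\sigma'_1$ with respect to the \emph{new} contact form and using only the axiom $i_{R_{\theta'}}\theta' = 1$, you obtain $\omega' \equiv \sigma'_1 - \sigma'_2 \equiv f^{-1}(\sigma_1-\sigma_2)\pmod{\theta}$ without ever computing $R_{\theta'}$ explicitly. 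This is slightly more elegant and makes the independence transparent, at the cost of not producing the Reeb formula, which the paper does not use elsewhere anyway.
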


We call $\mathcal{M}$ {\it the Monge--Amp{\`e}re system with} $(E_1, E_2)$
{\it induced from} $\omega'_1$, $\omega'_2$.

To show Lemma \ref{MA-system construction},
take a local symplectic frame $X_1, \dots, X_n, P_1, \dots, P_n$
of $D$ with respect to $d\theta$:
\begin{gather*}
d\theta(X_i, X_j) = 0, \qquad d\theta(P_i, P_j) = 0, \qquad
d\theta(X_i, P_j) = \delta_{ij},
\end{gather*}
with
\begin{gather*}
E_1 = \langle X_1, \dots, X_n\rangle, \qquad
E_2 = \langle P_1, \dots, P_n\rangle.
\end{gather*}
Then $X_1, \dots, X_n, P_1, \dots, P_n, R_\theta$ form a local frame of~$TM$.

\begin{Lem}
\label{Reeb}
The Reeb vector field $R_{\theta'}$ for a contact form
$\theta' = \rho\theta$ defining $D$ is given by
\begin{gather*}
R_{\theta'} = \frac{1}{\rho^2}\left[
\sum_{i=1}^n (P_i\rho)X_i - \sum_{i=1}^n (X_i\rho)P_i + \rho R_{\theta}
\right].
\end{gather*}
\end{Lem}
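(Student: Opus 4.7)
The plan is to appeal to the uniqueness of the Reeb vector field: $R_{\theta'}$ is characterized by the two conditions $i_{R_{\theta'}}\theta' = 1$ and $i_{R_{\theta'}}d\theta' = 0$, so it suffices to verify that the vector field $V$ defined by the right-hand side of the claimed formula satisfies both. Denote
\begin{gather*}
V = \frac{1}{\rho^2}\left[\sum_{i=1}^n(P_i\rho)X_i - \sum_{i=1}^n(X_i\rho)P_i + \rho R_\theta\right].
\end{gather*}

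The first condition is essentially free. Since $\theta(X_i) = 0$, $\theta(P_j) = 0$ and $\theta(R_\theta) = 1$, only the $R_\theta$-component of $V$ contributes to $\theta(V)$, giving $\theta(V) = 1/\rho$ and therefore $\theta'(V) = \rho\theta(V) = 1$.

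For the second condition I would split $d\theta' = d\rho\wedge\theta + \rho\, d\theta$ and compute $i_V$ of each summand. Let $(\alpha_i,\beta_j,\theta)$ be the coframe dual to $(X_i,P_j,R_\theta)$; then the tautology $d\rho = \sum_i(X_i\rho)\alpha_i + \sum_j(P_j\rho)\beta_j + (R_\theta\rho)\theta$ combined with the symplectic relations $d\theta(X_i,P_j) = \delta_{ij}$, $d\theta(X_i,X_j) = d\theta(P_i,P_j) = 0$, and $i_{R_\theta}d\theta = 0$ gives, by pairing $i_V d\theta$ against each frame vector,
\begin{gather*}
i_V d\theta = \frac{1}{\rho^2}\bigl(d\rho - (R_\theta\rho)\theta\bigr).
\end{gather*}
On the other hand, $i_V(d\rho\wedge\theta) = (V\rho)\theta - \theta(V)\,d\rho$, and the explicit formula for $V$ shows that the $X_i$ and $P_j$ contributions to $V\rho$ cancel in pairs, leaving $V\rho = (R_\theta\rho)/\rho$. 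Hence
\begin{gather*}
i_V(d\rho\wedge\theta) = \frac{R_\theta\rho}{\rho}\theta - \frac{1}{\rho}d\rho,
\end{gather*}
and adding $\rho$ times the previous display produces exactly the negative of this, so $i_V d\theta' = 0$.

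The proof is essentially a bookkeeping exercise in linear algebra relative to the symplectic frame; the only place where one has to be careful is the sign of $d\theta(P_i,X_j) = -\delta_{ij}$, which is responsible for the minus sign in front of $\sum (X_i\rho)P_i$ in the formula for $V$. No real obstacle arises, since uniqueness of $R_{\theta'}$ reduces the lemma to verifying two identities that fall out of these two direct computations.
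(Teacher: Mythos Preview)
Your proof is correct and follows essentially the same route as the paper: both use the dual coframe $(\alpha_i,\beta_j,\theta)$, the expansion $d\rho = \sum (X_i\rho)\alpha_i + \sum (P_j\rho)\beta_j + (R_\theta\rho)\theta$, and the splitting $d\theta' = d\rho\wedge\theta + \rho\,d\theta$ together with the symplectic relations on the frame. The only cosmetic difference is that the paper writes $R_{\theta'} = \sum a_iX_i + \sum b_jP_j + cR_\theta$ with unknown coefficients and solves the two Reeb conditions for $a_i,b_j,c$, whereas you start from the claimed formula and verify the same two conditions directly; the underlying computation is identical.
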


\begin{proof}
Let
$\alpha_1, \dots, \alpha_n, \beta_1, \dots, \beta_n, \theta$ be
the frame of $T^*M$ dual to
$X_1, \dots, X_n, P_1, \dots, P_n, R_\theta$.
Set $R_{\theta'} = \sum_i a_iX_i + \sum_j b_jP_j + cR_{\theta}$.
Then, by $i_{R_{\theta'}} \theta' = 1$, we have $c = \frac{1}{\rho}$.
Besides we have
\begin{gather*}
i_{R_{\theta'}} d\theta'  =  [i_{R_{\theta'}} d\rho]\theta -
i_{R_{\theta'}} \theta\ d\rho
+ \rho i_{R_{\theta'}} d\theta
\\
\hphantom{i_{R_{\theta'}} d\theta'}{}
 =
\left[\sum_i a_i(X_i\rho) + \sum_j b_j(P_j\rho) + c(R_\theta\rho)\right]\theta
- c d\rho + \rho\left[\sum_i a_i\beta_i - \sum_j b_j\alpha_j\right],
\end{gather*}
while $d\rho = \sum_i (X_i\rho)\alpha_i + \sum_j (P_j\rho)\beta_j
+ (R_\theta \rho)\theta$.
Therefore, by $i_{R_{\theta'}} d\theta' = 0$, we have
\begin{gather*}
- \sum_i [\rho b_i + c(X_i\rho)]\alpha_i +
\sum_j [\rho a_j - c(P_j\rho)]\beta_j
+ \left[\sum_i a_i(X_i\rho) + \sum_j b_j(P_j\rho)\right]\theta = 0.
\end{gather*}
Thus we have
\begin{gather*}
a_i = \frac{1}{\rho^2} P_i\rho, \qquad
b_i = - \frac{1}{\rho^2} X_i\rho, \qquad
c = \frac{1}{\rho}.\tag*{\qed}
\end{gather*}
\renewcommand{\qed}{}
\end{proof}

\begin{proof}[Proof of Lemma \ref{MA-system construction}]
We set $\Pi_1^*{\omega'}_1 =
\lambda\cdot \theta\wedge\alpha_1\wedge\cdots\wedge\alpha_n$
for a function~$\lambda$.
Then
$i_{R_{\theta}} (\Pi_1^*{\omega'}_1)
= \lambda\cdot \alpha_1\wedge\cdots\wedge\alpha_n$.
By Lemma~\ref{Reeb}, we have for~$\theta'$
\begin{gather*}
i_{R_{\theta'}} \big(\Pi_1^*{\omega'}_1\big)
\equiv \frac{1}{\rho}\lambda\cdot \alpha_1\wedge\cdots\wedge\alpha_n
= \frac{1}{\rho}\cdot i_{R_{\theta}} {\omega'}_1,\quad \rm{mod}\ \theta.
\end{gather*}
Similarly we have
$i_{R_{\theta'}} (\Pi_2^*{\omega'}_2)
\equiv \frac{1}{\rho}\cdot i_{R_{\theta}} {\omega'}_2$, $\rm{mod}\ \theta$.
Therefore
\begin{gather*}
i_{R_{\theta'}} \big(\Pi_1^*{\omega'}_1 - \Pi_2^*{\omega'}_2\big) \equiv
\frac{1}{\rho} i_{R_{\theta}} \big(\Pi_1^*{\omega'}_1 - \Pi_2^*{\omega'}_2\big),\quad
\rm{mod}\ \theta.
\end{gather*}
Therefore $\mathcal{M} = \langle \theta, d\theta, \omega\rangle$
is independent of the choice of~$\theta$.
\end{proof}

Suppose that $(E_1, E_2)$ is integrable.
Let
\begin{gather*}
\xymatrix{
& M^{2n+1} \ar[ld]_{\pi_1} \ar[rd]^{\pi_2}  &\\
W^{n+1}_1 &  & W^{n+1}_2}
\end{gather*}
be a double Legendrian f\/ibration induced by a
Lagrangian pair $(E_1, E_2)$ locally.
Suppose that
a volume $(n+1)$-form ${\Omega_1}$ is given on $W_1$
(resp.\ a~volume $(n+1)$-form ${\Omega_2}$ on $W_2$).
Since
$(TM/E_2)_x \cong T_{\pi_1(x)}W_1$ via $\pi_1$
(resp.\ $(TM/E_1)_x \cong T_{\pi_2(x)}W_2$ via $\pi_2$) for $x\in M$,
${\Omega_1}$ (resp.~${\Omega_2}$)
is regarded as a non-zero section $\omega'_1$ of
$\wedge^{n+1}(TM/E_1)^*$ (resp.~$\omega'_2$ of
$\wedge^{n+1}(TM/E_2)^*$).
Then, following the general setting,  we set
\begin{gather*}
\omega = i_{R_\theta} \big(\pi_1^*\Omega_1 - \pi_2^*\Omega_2\big).
\end{gather*}
Note that $\pi_1^*\Omega_1$, $\pi_2^*\Omega_2$
are basic forms (cf.~\cite{I-L}) for $\pi_1$, $\pi_2$ respectively, however
$i_{R_\theta} \pi_1^*\Omega_1$ and $i_{R_\theta} \pi_2^*\Omega_2$
need not to be basic.

\begin{Ex}
\label{standard model calcu}
Consider ${\mathbf{R}}^{2n+1}$ with coordinates $(x, z, p) =
(x_1, \dots, x_n, z$, $p_1, \dots, p_n)$, $D = \{ \theta = 0\}$,
$\theta = dz - \sum\limits_{i=1}^n p_i dx_i$,
$E^{\rm{st}}_1 =
\langle \frac{\partial}{\partial x_1} + p_1\frac{\partial}{\partial z},
\dots, \frac{\partial}{\partial x_n} + p_n\frac{\partial}{\partial z}\rangle$,
$E^{\rm{st}}_2 =
\langle \frac{\partial}{\partial p_1},
\dots, \frac{\partial}{\partial p_n}\rangle$, $\pi_1 \colon  {\mathbf{R}}^{2n+1} \to
{\mathbf{R}}^{n+1}$, $\pi_1(x, z, p) = (x, z)$, $\pi_2 \colon  {\mathbf{R}}^{2n+1} \to
{\mathbf{R}}^{n+1}$, $\pi_2(x, z, p) = (p, x\cdot p - z)$. Set $\tilde{z} = x\cdot p - z$.
The Reeb vector f\/ield for $\theta$ is given by
$R = \frac{\partial}{\partial z}$.

Let $\pi_1^*\omega'_1 = f(x, z, p) dz \wedge dx_1\wedge \cdots \wedge dx_n$
be a non-zero section of $\wedge^{n+1}(T{\mathbf{R}}^{2n+1}/E_2)^*$
pulled-back to an $(n+1)$-form on ${\mathbf{R}}^{2n+1}$.
Then $i_R (\pi_1^*\omega'_1) = f(x, z, p) dx_1\wedge \cdots \wedge dx_n$.
Similarly, let
$\pi_2^*\omega'_2 = -g(x, z, p) d\tilde{z} \wedge dp_1\wedge \cdots \wedge
dp_n$ be a non-zero section of $\wedge^{n+1}(T{\mathbf{R}}^{2n+1}/E_1)^*$
pulled-back to an $(n+1)$-form on~${\mathbf{R}}^{2n+1}$.
Then $i_R (\pi_2^*\omega'_2) = g(x, z, p) dp_1\wedge \cdots \wedge dp_n$.
Thus, following the general setting, we have
\begin{gather*}
\omega = f(x, z, p)dx_1\wedge \cdots \wedge dx_n -
g(x, z, p) dp_1\wedge \cdots \wedge dp_n,
\end{gather*}
and we obtain Hesse Monge--Amp{\`e}re systems.

Further, let $\Omega_1 =
f(x, z) dz \wedge dx_1\wedge \cdots \wedge dx_n$
(resp.\ $\Omega_2 = - g(p, \tilde{z}) d\tilde{z} \wedge dp_1\wedge \cdots
\wedge dp_n$) be
a~volume form on~$W_1 = {\mathbf{R}}^{n+1}$ (resp.\ on~$W_2 = {\mathbf{R}}^{n+1}$).
Then $i_R \pi_1^*\Omega_1 =
f(x, z) dx_1\wedge \cdots \wedge dx_n$
(resp.\ $i_R \pi_2^*\Omega_2 =
g(p, \tilde{z}) dp_1\wedge \cdots \wedge dp_n$).
Thus we have
\begin{gather*}
\omega = f(x, z) dx_1\wedge \cdots \wedge dx_n
- g(p, \tilde{z}) dp_1\wedge \cdots \wedge dp_n,
\end{gather*}
and we obtain Euler--Lagrange Monge--Amp\`ere systems.
\end{Ex}

\begin{Rem}
The Poincar\'{e}--Cartan form is given by $\Pi = \pi_1^*\Omega_1 - \pi_2^*\Omega_2$ (see Remark~\ref{Poincare-Cartan form}).
\end{Rem}

\section{Homogeneous Monge--Amp{\`e}re systems with Lagrangian pairs}
\label{Homogeneous M-A systems with Lagrangian pairs.}

{\bf 8.1.}
Monge--Amp{\`e}re system on ${\mathbf{R}}^{2n+1}$ as
${\rm Hess} = c$ on~${\mathbf{R}}^{n+1}$.
The Monge--Amp{\`e}re system with Lagrangian pair
for the equation ${\rm Hess}(f)= c$
in equi-af\/f\/ine geometry is given as follows.

Consider the contact manifold
$M = {\mathbf{R}}^{2n+1}$ with coordinates
\begin{gather*}
(x, z, p) = (x_1, \dots, x_n, z, p_1, \dots, p_n)
\end{gather*}
and with the contact form
$\theta = dz - \sum\limits_{i=1}^n p_idx_i$.
We set two Lagrangian sub-bundles $E_1$, $E_2$
of the contact distribution $D=\{ \theta =0 \}$ by
\begin{gather*}
E_1  =
\left\langle \frac{\partial}{\partial x_1}+p_1\frac{\partial}{\partial z},
\dots,\frac{\partial}{\partial x_n}+p_n\frac{\partial}{\partial z}\right\rangle,
\qquad
E_2 =
\left\langle \frac{\partial}{\partial p_1},\dots,\frac{\partial}{\partial p_n}
\right\rangle,
\end{gather*}
which form a Lagrangian pair of $(M, D)$.
The Reeb vector f\/ield~$R$ is given by~$\frac{\partial}{\partial z}$.
Note that
$
- \theta = d\Big(\sum\limits_{i=1}^n p_ix_i - z\Big) - \sum\limits_{i=1}^n x_idp_i.
$
Then we have the double Legendrian f\/ibration induced by~$(E_1,E_2)$:
\begin{gather*}
\xymatrix{
& M={\mathbf{R}}^{2n+1} \ar[ld]_{\pi_1} \ar[rd]^{\pi_2}  &\\
W_1={\mathbf{R}}^{n+1} &  & W_2={\mathbf{R}}^{n+1}}
\end{gather*}
where
\begin{gather*}
\pi_1(x, z, p) = (x, z), \qquad
\pi_2(x, z, p) =
(p, \tilde{z}), \qquad \tilde{z} = \sum_{i=1}^n p_ix_i - z.
\end{gather*}
Moreover take the
$(n+1)$-forms
\begin{gather*}
\Omega_1
= c(dz \wedge dx_1\wedge \cdots \wedge dx_{n}),\qquad
\Omega_2
= - d\tilde{z} \wedge dp_1\wedge \cdots \wedge dp_{n}
\end{gather*}
on $W_1 = {\mathbf{R}}^{n+1}$
($c\in {\mathbf{R}}$, $c\not= 0$)
and
on $W_2 = {\mathbf{R}}^{n+1}$ respectively.
Then $\omega = i_R (\pi_1^*\Omega_1 - \pi_2^*\Omega_2)$
is given by
\begin{gather*}
\omega = c dx_1\wedge \cdots \wedge dx_{n} -
dp_1\wedge \cdots \wedge dp_{n}.
\end{gather*}

Thus we construct
the Monge--Amp{\`e}re system $\mathcal{M}=\langle \theta, d\theta,
\omega \rangle $
with the Lagrangian pair $(E_1,E_2)$ globally on $M={\mathbf{R}}^{2n+1}$.

\begin{Prop}
Under the situation above,
we have a Monge--Amp{\`e}re system
$\mathcal{M}$ generated by $(\theta, d\theta, \omega)$
with a Lagrangian pair $(E_1,E_2)$ on $M={\mathbf R}^{2n+1}$.
The projection to $W_1 = {\mathbf{R}}^{n+1}$
of a~geometric solution of $\mathcal{M}$ satisfies the equation
$\textrm{Hess}(f)=c$, when it is represented as a~graph
$z = f(x_1,\dots,x_n)$
outside of its singular locus.
The projection to $W_2 = {\mathbf{R}}^{n+1}$
of a~geometric solution of $\mathcal{M}$ satisfies the equation
$\textrm{Hess}(f)=\frac{1}{c}$, when it is represented as a~graph
$z'=\sum\limits_{i=1}^np_ix_i-z = f(p_1,\dots,p_n)$
outside of its singular locus.
\end{Prop}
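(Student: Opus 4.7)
The plan is in two main steps. First, I would check that the stated data genuinely define a (bi-decomposable) Monge--Amp\`ere system with the given Lagrangian pair; second, I would decode the geometric-solution condition in the two dual graph parametrizations.

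For the first step, writing $\omega = \omega_1 - \omega_2$ with $\omega_1 = c\, dx_1\wedge\cdots\wedge dx_n$ and $\omega_2 = dp_1\wedge\cdots\wedge dp_n$, one reads off immediately from the explicit frames of $E_1$, $E_2$ that the $dp_i$ annihilate $E_1$ and the $dx_i$ annihilate $E_2$, that $\omega_1\vert_{E_1}$ and $\omega_2\vert_{E_2}$ are volume forms, and that $d\theta = \sum_i dx_i\wedge dp_i$ pairs $E_1$ and $E_2$ into a symplectic decomposition of $D$. Hence $(E_1,E_2)$ is a Lagrangian pair and $\omega$ is bi-decomposable, so $\mathcal{M} = \langle \theta, d\theta, \omega\rangle$ is a bi-decomposable Monge--Amp\`ere system in the sense of Section~\ref{Monge-Ampere systems and Lagrangian pairs}; this is also the instance of the construction of Example~\ref{standard model calcu} (via Lemma~\ref{MA-system construction}) with $f(x,z)\equiv c$ and $g(p,\tilde z)\equiv 1$.

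For the second step, let $\iota\colon L \to M$ be a geometric solution and work on the open subset of $L$ where $\pi_1\circ \iota$ is an immersion, so that the image is locally the $1$-jet lift of a function $z = f(x_1,\ldots,x_n)$ and the Legendrian condition $\iota^*\theta = 0$ forces $p_i = \partial f/\partial x_i$. Differentiating gives $dp_i = \sum_j \frac{\partial^2 f}{\partial x_i\partial x_j}\, dx_j$, so the pull-back of $dp_1\wedge\cdots\wedge dp_n$ is $\mathrm{Hess}(f)\, dx_1\wedge\cdots\wedge dx_n$; the identity $\iota^*\omega = 0$ then collapses to $(c - \mathrm{Hess}(f))\, dx_1\wedge\cdots\wedge dx_n = 0$, i.e.\ $\mathrm{Hess}(f) = c$.

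The $W_2$ statement follows by the symmetric computation in the dual Darboux chart: using $-\theta = d\tilde{z} - \sum_i x_i\, dp_i$, a geometric solution presented as $\tilde{z} = f(p_1,\ldots,p_n)$ satisfies $x_i = \partial f/\partial p_i$, and the same Hessian-determinant argument gives $\iota^*(dx_1\wedge\cdots\wedge dx_n) = \mathrm{Hess}(f)\, dp_1\wedge\cdots\wedge dp_n$, so that $\iota^*\omega = 0$ becomes $c\,\mathrm{Hess}(f) = 1$. There is no genuine obstacle in this proof; the only point requiring attention is the hypothesis \emph{outside the singular locus}, which is precisely the open set on which the relevant projection $\pi_i\circ \iota$ is a local diffeomorphism so that the Legendrian immersion admits the chosen graph representation.
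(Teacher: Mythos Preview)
Your proposal is correct. The paper does not give an explicit proof of this proposition; it is stated as an immediate consequence of the construction preceding it (together with Example~\ref{standard model calcu} and Lemma~\ref{MA-system construction}), and your argument is precisely the direct verification one would write out to fill in those details.
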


\begin{Rem}
The Monge--Amp{\`e}re system for ${\mathrm{Hess}} = c$
is isomorphic to the system for ${\mathrm{Hess}} = 1$ if $c > 0$, and
is isomorphic to the system for ${\mathrm{Hess}} = -1$ if $c < 0$.
\end{Rem}

\begin{Rem}
The Monge--Amp{\`e}re system for ${\mathrm{Hess}} = c, c \not= 0$ has the natural symmetry
by the group $G'$ of equi-af\/f\/ine transformations on $W_1={\mathbf{R}}^{n+1}$
preserving the vector f\/ield $\frac{\partial}{\partial z}$.
The group $G'$ is given by the semi-direct product $G' = G''\ltimes {{\mathbf{R}}}^{n+1}$
of $G'' \subset {\rm SL}(n+1, {\mathbf{R}})$
and ${\mathbf{R}}^{n+1}$, where
\begin{gather*}
G'' =\left\{ \left.\left(
\begin{matrix}
A & 0 \\
{}^t\!a & 1
\end{matrix}
\right)\,
\right\vert  \,
A \in  {\rm SL}(n, {\mathbf{R}}), \,a \in {\mathbf{R}}^n \right\}.
\end{gather*}
Note that $\dim G' = n(n+2)$ and also that each element of $G'$
is identif\/ied with
\begin{gather*}
\left(
\begin{matrix}
1 & 0 & 0 \\
b & A & 0 \\
c & {}^t\!a & 1
\end{matrix}
\right)
\end{gather*}
via an appropriate embedding ${\rm SL}(n+1, {\mathbf{R}}) \hookrightarrow {\rm GL}(n+2, {\mathbf{R}})$.
The Monge--Amp{\`e}re system for ${\mathrm{Hess}} = c, c \not= 0$,
in fact,
has bigger symmetry which attains the maximum for the dimension estimate
of automorphisms given in Section~\ref{Automorphisms}
for bi-decomposable Monge--Amp{\`e}re systems.

Let $G$ be a subgroup of the projective transformation group ${\rm PGL}(n+2, {\mathbf{R}})$
on ${\mathbf{R}}^{n+2}$ consisting of transformations represented by the matrices
\begin{gather*}
\widetilde{A} = \left(
\begin{matrix}
\ell & 0 & 0 \\
b & A & 0 \\
c & {}^t\!a & k
\end{matrix}
\right),
\end{gather*}
considered up to non-zero scalar multiples.
Here $\ell, k \in {\mathbf{R}}^{\times}$, $A \in {\rm GL}(n, {\mathbf{R}})$, $a, b \in {\mathbf{R}}^n, c \in {\mathbf{R}}$
satisfying the condition $(\det A)^2 = (k\ell)^n$.
From the condition, we can take $\det A = \pm 1$, $\ell = \pm 1/k$ in ${\rm PGL}(n+2, {\mathbf{R}})$.
If $n$ is odd, then we can take $\det A = 1$ and $\ell = 1/k$.
Note that $\dim G = (n+1)^2$.

The group $G$ acts on $M = {\mathbf{R}}^{2n+1} = {\mathbf{R}}^n \times {\mathbf{R}} \times {\mathbf{R}}^n$ transitively by
\begin{gather*}
\widetilde{A}(x, z, p) = \left(\frac{1}{\ell}(Ax + b), \, \frac{1}{\ell}\big(kz - {}^t\!ax - c\big), \, k\left(p - \frac{1}{k}{}^t\!a\right)A^{-1}\right).
\end{gather*}
Here $a$, $b$, $x$ are regarded as column $n$-vectors and $p$ a row $n$-vector.
The contact form $\theta = dz - pdx$ is transformed to
$\frac{k}{\ell}\theta$, therefore
the contact structure $D=\{ \theta =0 \}$ is $G$-invariant.
The bi-decomposable form $\omega$ is transformed to
\begin{gather*}
\frac{1}{\ell^n}(\det A)c(dx_1\wedge \cdots \wedge dx_{n})
-
k^n(\det A)^{-1}(dp_1\wedge \cdots \wedge dp_{n}) = \frac{1}{\ell^n}(\det A)  \omega,
\end{gather*}
by the condition $(\det A)^2 = (k\ell)^n$.
Therefore $G$ leaves the Monge--Amp{\`e}re system ${\mathcal M} = \langle \theta, d\theta, \omega\rangle$
for ${\mathrm{Hess}} = c$, $c \not= 0$.

The group $G$ acts on $W_1 = {\mathbf{R}}^{n+1}$ and on $W_2={\mathbf{R}}^{n+1}$ respectively by
\begin{gather*}
\widetilde{A}(x, z) = \left(\frac{1}{\ell}(Ax + b), \, \frac{1}{\ell}\big(kz - {}^t\!ax - c\big)\right),
\end{gather*}
and by
\begin{gather*}
\widetilde{A}(\tilde{z}, p) = \left(\frac{1}{\ell}\left(k\tilde{z} + k\left(p - \frac{1}{k}{}^t\!a\right)A^{-1}b + c\right), \, k\left(p - \frac{1}{k}{}^t\!a\right)A^{-1}\right).
\end{gather*}
Then the volume form $\Omega_1$ on~$W_1$ (resp.~$\Omega_2$ on~$W_2$)
is transformed to $\frac{k}{\ell^{n+1}}(\det A)\Omega_1$ (resp.\ $\frac{k^{n+1}}{\ell}(\det A)^{-1}\Omega_2$).
\end{Rem}

{\bf 8.2.}
Monge--Amp{\`e}re system on $T_1{\mathbf{E}}^{n+1}$
as $K=c$ in ${\mathbf{E}}^{n+1}$.
In Sections~8.2--8.4, we describe
the Monge--Amp{\`e}re systems corresponding to the
the equation of constant Gaussian curvature in Euclidean, spherical or hyperbolic geometry.
To provide the concrete form of the Monge--Amp{\`e}re system, we treat three cases separately.

In the famous paper of Gauss~\cite{S},
the ``Gaussian curvature'' of a space surface
is introduced
as the ratio of areas in the ``Gauss map'' of the surface.
We observe that the equation of constant Gaussian curvature is
regarded as a Monge--Amp{\`e}re system with Lagrangian pair as follows.

Consider the unit tangent bundle $T_1{\mathbf{E}}^{n+1}={\mathbf{E}}^{n+1}\times S^n$
of the Euclidean space ${\mathbf{E}}^{n+1}$.
The standard contact structure on ${\mathbf{E}}^{n+1}\times S^n$
is
given by the one-form $\theta
= y_1dx_1 + y_2dx_2 + \cdots + y_{n+1}dx_{n+1}$ on
${\mathbf{E}}^{n+1}\times {\mathbf{E}}^{n+1}$,
restricted to ${\mathbf{E}}^{n+1}\times S^n$.
Here
\begin{gather*}
(x;y)=(x_1,x_2,\dots,x_{n+1};y_1,y_2,\dots,y_{n+1})
\end{gather*}
is the system of coordinates on ${\mathbf{E}}^{n+1}\times {\mathbf{E}}^{n+1}$.
We set the contact distribution $D=\{ \theta = 0 \} \subset
T({\mathbf{E}}^{n+1}\times S^n)$ and two Lagrangian subbundles of $D$:
\begin{gather*}
E_1
=\left\{ u = \xi_1 \frac{\partial}{\partial x_1}+
\xi_2 \frac{\partial}{\partial x_2}+\cdots +
\xi_{n+1}\frac{\partial}{\partial x_{n+1}}
\,\Big|\, \xi_1y_1+\xi_2y_2+\cdots +\xi_{n+1}y_{n+1}=0 \right\},
\\
E_2
=\left\{ v=\eta_1 \frac{\partial}{\partial y_1}+
\eta_2 \frac{\partial}{\partial y_2}+\cdots +
\eta_{n+1}\frac{\partial}{\partial y_{n+1}}
\,\Big|\, v\ \text{is tangent to}\ S^n \right\},
\end{gather*}
which form an integrable Lagrangian pair of~$(M, D)$.
Then we have the double Legendrian f\/ibration induced by $(E_1,E_2)$:
\begin{gather*}
\xymatrix{
& M={\mathbf{E}}^{n+1}\times S^n \ar[ld]_{\pi_1} \ar[rd]^{\pi_2}  &\\
W_1={\mathbf{E}}^{n+1} &  & W_2={\mathbf{R}}\times S^n}
\end{gather*}
Here $\pi_1(x,y)=x, \pi_2(x,y)=(x\cdot y,y)$ for
$(x,y)\in {\mathbf{E}}^{n+1}\times S^n \subset {\mathbf{E}}^{n+1} \times {\mathbf{E}}^{n+1}$.

\begin{Lem}
\label{Euclid-M-A}
The above Lagrangian pair is f\/lat, namely, the bi-Legendrian f\/ibration is
contactomorphic to the standard one: The Lagrangian pair $(E_1, E_2) = ({\rm Ker}(\pi_2)_*, {\rm Ker}(\pi_1)_*)$
is flat
$($see the standard example in Introduction$)$.
\end{Lem}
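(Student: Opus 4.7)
The plan is to exhibit an explicit local contactomorphism from a neighborhood of an arbitrary point of $M = \mathbf{E}^{n+1}\times S^n$ onto an open subset of $(\mathbf{R}^{2n+1}, D_{\rm st})$ that carries $(E_1, E_2)$ onto $(E_1^{\rm st}, E_2^{\rm st})$. By the rotational symmetry of the setup, it suffices to work in a chart where $y_{n+1} > 0$. On such a neighborhood I would introduce coordinates
\begin{gather*}
\widetilde{x}_i = x_i, \qquad \widetilde{z} = x_{n+1}, \qquad \widetilde{p}_i = -\,\frac{y_i}{y_{n+1}} \qquad (1\leq i\leq n),
\end{gather*}
with $y_{n+1} = (1+|\widetilde{p}|^2)^{-1/2}$, and then proceed in three routine steps.

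First, I would verify that $\theta = \sum_{i=1}^{n+1} y_i\, dx_i$ equals $y_{n+1}\bigl(d\widetilde{z} - \sum_{i=1}^n \widetilde{p}_i\, d\widetilde{x}_i\bigr)$, so that the coordinate change is a contactomorphism sending $D$ to $D_{\rm st}$. Second, since $\pi_1(x,y) = x$ has fibers $\{x\}\times S^n$, the subbundle $E_2 = \ker(\pi_1)_*$ is cut out by $d\widetilde{x}_1 = \cdots = d\widetilde{x}_n = d\widetilde{z} = 0$, which is precisely $E_2^{\rm st} = \langle \partial/\partial\widetilde{p}_1, \dots, \partial/\partial\widetilde{p}_n\rangle$.

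Third, for $E_1 = \ker(\pi_2)_*$ I would observe that the fibers of $\pi_2(x,y) = (x\cdot y, y)$ are the common level sets of the functions $y$ and $x\cdot y$. In the new coordinates $y$ depends only on $\widetilde{p}$, and a direct computation gives
\begin{gather*}
x\cdot y = y_{n+1}\Bigl(\widetilde{z} - \sum_{i=1}^n \widetilde{x}_i \widetilde{p}_i\Bigr).
\end{gather*}
On a fiber $\widetilde{p}$ is constant, hence so is $y_{n+1}$, so the level sets of $x\cdot y$ coincide with the level sets of $\sum \widetilde{x}_i \widetilde{p}_i - \widetilde{z}$. This is exactly the function appearing in the standard projection $\pi_2^{\rm st}(\widetilde{x},\widetilde{z},\widetilde{p}) = (\widetilde{p}, \sum \widetilde{x}_i\widetilde{p}_i - \widetilde{z})$, so the two foliations agree and $E_1 = E_1^{\rm st}$.

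There is no serious obstacle here: the only mildly subtle point is recognizing that flatness concerns only the pair of foliations, so one is free to rescale $\theta$ by $y_{n+1}$ and to replace $x\cdot y$ by any function with the same fibers; once that is noted, the identification is forced by the stereographic-type parametrization of the upper hemisphere of $S^n$. Charts covering points with $y_{n+1} \leq 0$ are handled identically after a rotation of $\mathbf{E}^{n+1}$, which is a symmetry of the whole construction.
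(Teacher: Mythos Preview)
Your proof is correct and follows essentially the same route as the paper: the paper also works on $\{y_{n+1}\neq 0\}$ with the coordinates $x'_i=x_i$, $z'=x_{n+1}$, $p'_i=-y_i/y_{n+1}$, observes $\theta=y_{n+1}\bigl(dz'-\sum p'_i\,dx'_i\bigr)$, and defines the matching diffeomorphism $\psi(w,y)=(w/y_{n+1},\,p')$ on $W_2=\mathbf{R}\times S^n$, which is exactly your rescaling of $x\cdot y$ by $y_{n+1}$. The only cosmetic difference is that the paper packages the identification of $E_1$ via the explicit map $\psi$ on the base $W_2$, while you argue directly with the level sets of $x\cdot y$; both amount to the same computation.
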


\begin{proof}
On the open sets $U = \{ (x, y) \in {\mathbf{E}}^{n+1}\times S^n
\,|\, y_{n+1} \not= 0 \}$ of ${\mathbf{E}}^{n+1}\times S^n$
and $V = \{ (w, y) \in {\mathbf{R}}\times S^n \,|\,
 y_{n+1} \not= 0 \}$ of ${\mathbf{R}}\times S^n$,
take the system of coordinates $x'_i = x_i$, $p'_i = - \frac{y_i}{y_{n+1}}$,
$z' = x_{n+1}$, $1 \leq i \leq n$, on~$U$, and def\/ine
the dif\/feomorphism $\Phi \colon  U \to {\mathbf{R}}^{2n+1}$ by $\Phi(x, y) = (x', z', p')$.
Moreover take the system of coordinates
$\tilde{z}' = \frac{w}{y_{n+1}}$, $p'_i = - \frac{y_i}{y_{n+1}}$,
$1 \leq i \leq n$, on $V$ and def\/ine the dif\/feomorphism
$\psi \colon  V \to {\mathbf{R}}^{n+1}$ by $\psi(w, y) = (\tilde{z}', p')$.
We denote by $\varphi \colon  {\mathbf{E}}^{n+1} \to
{\mathbf{R}}^{n+1}$ the identity map, forgetting the Euclidean metric.
Then $(\Phi, \varphi, \psi)$ induces the contactomorphism from
$(U, D; E_1, E_2)$ to $({\mathbf{R}}^{2n+1}, D_{\rm{st}};
E^{\rm{st}}_1, E^{\rm{st}}_2)$.
In fact $\theta = y_{n+1}\Big(dz' - \sum\limits_{i=1}^n p'_i dx'_i\Big) =
y_{n+1}(\Phi^{-1})^*\theta_{\rm{st}}$ and
$y_{n+1} = \pm 1\Big/\sqrt{1 + \sum\limits_{i=1}^n (p'_i)^2}$.
Similarly on each open set $\{ y_i \not= 0\}$, $1 \leq i \leq n+1$,
we see the f\/latness of $(E_1, E_2)$.
\end{proof}

We endow ${\mathbf{E}}^{n+1}$
with the standard volume form
\begin{gather*}
\Omega_1 = c dx_1\wedge dx_2 \wedge \cdots \wedge dx_{n+1},
\end{gather*}
multiplied with a real constant~$c$ $(\not= 0)$.
Moreover we endow $(z;y_1,y_2,\dots,y_{n+1}) \in
{\mathbf{R}}\times S^n$ $(\subset {\mathbf{R}} \times {\mathbf{E}}^{n+1})$
with the standard volume form on ${\mathbf{R}}\times S^n$
\begin{gather*}
\Omega_2 = dz \wedge
\sum_{i=1}^{n+1}\big((-1)^{i+1}y_idy_1\wedge \cdots \wedge \breve{dy_i} \wedge
\cdots \wedge dy_{n+1}\big)\vert_{{\mathbf{R}}\times S^n}.
\end{gather*}
The Reeb vector f\/ield $R$ on ${\mathbf{E}}^{n+1}\times S^n$ is
given by $R = y_1\frac{\partial}{\partial x_1} + y_2\frac{\partial}{\partial x_2} +
\cdots + y_{n+1}\frac{\partial}{\partial x_{n+1}}$, the ``tautological'' vector f\/ield.
Then we
set $\omega = i_R (\pi_1^*\Omega_1 - \pi_2^*\Omega_2)$.
Since
\begin{gather*}
i_R \pi_2^*\Omega_2    =
 i_R d(x\cdot y)\wedge
\sum_{i=1}^{n+1}\big((-1)^{i+1}y_idy_1\wedge \cdots \wedge \breve{dy_i} \wedge
\cdots \wedge dy_{n+1}\big)\\
\hphantom{i_R \pi_2^*\Omega_2 }{}
   =
\sum_{i=1}^{n+1}((-1)^{i+1}y_idy_1\wedge \cdots \wedge \breve{dy_i} \wedge
\cdots \wedge dy_{n+1}),
\end{gather*}
we have
\begin{gather*}
\omega
   =
c(y_1dx_2\wedge \cdots \wedge dx_{n+1} - y_2dx_1\wedge dx_3\wedge \cdots
\wedge dx_{n+1} + \cdots + (-1)^ny_{n+1}dx_1\wedge \cdots \wedge dx_n) \\
 \hphantom{\omega=}{} -
(y_1dy_2 \wedge \cdots \wedge dy_{n+1}
- y_2dy_1 \wedge dy_3\wedge \cdots \wedge dy_{n+1} + \cdots +
 (-1)^ny_{n+1}dy_1 \wedge \cdots \wedge dy_n)
\end{gather*}
on ${\mathbf{E}}^{n+1}\times S^n$.

This is exactly the reincarnation of the equation $K = c$
from the original def\/inition due to Gauss.

\begin{Prop}
Under the situation above,
we have a Monge--Amp{\`e}re system
$\mathcal{M}$ generated by $(\theta, d\theta, \omega)$
with a Lagrangian pair $(E_1,E_2)$ on $M=T_1{\mathbf{E}}^{n+1}={\mathbf{E}}^{n+1}\times S^n$.
The Gaussian curvature
of the projection to $W_1={\mathbf{E}}^{n+1}$ of a geometric solution
of~$\mathcal{M}$
is equal to constant~$c$ outside of singular locus.
\end{Prop}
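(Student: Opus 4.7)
The first assertion, that $\mathcal{M}=\langle \theta,d\theta,\omega\rangle$ is a bi-decomposable Monge--Amp\`ere system with the stated Lagrangian pair, is immediate from the construction of Section~\ref{A method to construct systems with Lagrangian pairs.} (Lemma~\ref{MA-system construction}) and the flatness of $(E_1,E_2)$ established in Lemma~\ref{Euclid-M-A}. The substantive content is the geometric interpretation of a geometric solution, and this is what the proposal addresses.

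The plan is as follows. Let $f\colon L \to M = {\mathbf{E}}^{n+1}\times S^n$ be a geometric solution, so $f^*\theta = 0$ and $f^*\omega = 0$. Away from its singular locus, $\pi_1\circ f$ is an immersion onto a hypersurface $\Sigma\subset {\mathbf{E}}^{n+1}$. The Legendrian condition $f^*\theta = 0$ reads $\sum_i y_i dx_i|_L = 0$, which says precisely that at each point the $S^n$-component of $f$ equals the unit normal of $\Sigma$; that is, the $S^n$-component of $f$ is the Gauss map $\nu\colon\Sigma\to S^n$ of $\Sigma$. Thus I may identify $f$ locally with $p\mapsto (p,\nu(p))$.

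Next I compute $f^*\omega$ by evaluating each of the two terms of $\omega = i_R(\pi_1^*\Omega_1 - \pi_2^*\Omega_2)$ on a frame $v_1,\ldots,v_n\in T_p\Sigma$. For the first term, using $\pi_1\circ f = \mathrm{id}_\Sigma$ and $\pi_{1*}R_{(p,\nu(p))} = \nu(p)$, I obtain
\begin{gather*}
f^*\bigl(i_R\pi_1^*\Omega_1\bigr)(v_1,\ldots,v_n)
= \Omega_1\bigl(\nu(p), v_1,\ldots,v_n\bigr)
= c\cdot \Omega_\Sigma(v_1,\ldots,v_n),
\end{gather*}
where $\Omega_\Sigma$ denotes the Euclidean area form on $\Sigma$. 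For the second term, $\pi_2(x,y)=(x\cdot y,y)$ and $R$ has no $y$-component, so $\pi_{2*}R_{(p,\nu(p))} = \partial/\partial z$, and $\pi_{2*}f_* v_i$ has $S^n$-component $\nu_* v_i$. Expanding $\Omega_2 = dz\wedge \omega_{S^n}$ on these vectors, only the $\partial/\partial z$ coming from $\pi_{2*}R$ contributes, yielding
\begin{gather*}
f^*\bigl(i_R\pi_2^*\Omega_2\bigr)(v_1,\ldots,v_n)
= \omega_{S^n}(\nu_* v_1,\ldots,\nu_* v_n)
= (\nu^*\omega_{S^n})(v_1,\ldots,v_n).
\end{gather*}
By the classical (Gaussian) definition of Gauss--Kronecker curvature, $\nu^*\omega_{S^n} = K\cdot \Omega_\Sigma$. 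Hence $f^*\omega = (c - K)\Omega_\Sigma$, and the condition $f^*\omega=0$ is equivalent to $K = c$ on the regular part of $\pi_1\circ f$.

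The main obstacle is keeping the identifications clean in the computation of $\pi_{2*}$: one must correctly treat $T_{(p,\nu(p))}M$ as a subspace of $T_p{\mathbf{E}}^{n+1}\oplus T_{\nu(p)}{\mathbf{E}}^{n+1}$, carefully separate the vertical $TS^n$ component of each $\pi_{2*}f_*v_i$ from its $\partial/\partial z$-component, and note that the latter does not contribute because $dz\wedge\omega_{S^n}$ annihilates any collection of vectors containing at most one $\partial/\partial z$ already furnished by $\pi_{2*}R$. All remaining calculations are routine, and the statement then follows directly from the identity $f^*\omega = (c-K)\Omega_\Sigma$ derived above.
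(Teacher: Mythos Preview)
Your argument is correct and is precisely the approach the paper has in mind. The paper does not spell out a proof of this proposition; it simply remarks, just before stating it, that the displayed $\omega$ ``is exactly the reincarnation of the equation $K=c$ from the original definition due to Gauss'' (the ratio of the area element pulled back by the Gauss map to the area element of the hypersurface). Your computation $f^*\omega=(c-K)\Omega_\Sigma$ is exactly that statement made explicit, and your handling of the $\partial/\partial z$-components in $\pi_{2*}f_*v_i$ is the right way to justify why they do not contribute.
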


\begin{Rem}
The Monge--Amp{\`e}re system for $K = c$
is isomorphic to the system for $K = 1$ if $c > 0$,
and is isomorphic to the system for $K = -1$ if
$c < 0$.
\end{Rem}

\begin{Rem}
Let $G$
be the Euclidean group on ${\mathbf{E}}^{n+1}$.
The group $G$ acts on the orthonormal frame bundle transitively, therefore so
on the unit tangent sphere bundle
(the orthonormal $1$-Stiefel bundle)
${\mathbf{E}}^{n+1}\times S^n$ of ${\mathbf{E}}^{n+1}$.
We f\/ix the origin $0$ in ${\mathbf{E}}^{n+1}$ and identify
${\mathbf{E}}^{n+1}$ with ${\mathbf{R}}^{n+1}$, giving
the isomorphism $G \cong {\rm O}(n+1) \ltimes {\mathbf{R}}^{n+1}$.
The contact structure $D = \{ \theta = 0\}$ and
the contact form $\theta = \sum\limits_{i=1}^{n+1} y_idx_i$ on
${\mathbf{E}}^{n+1}\times S^n$ are $G$-invariant, and
any $G$-invariant contact form is a non-zero multiple of~$\theta$.

The group $G$ acts on ${\mathbf{R}} \times S^n$ by
\begin{gather*}
g(r, v) = (g(0)\cdot g(v) + r, g(v)) = (b\cdot Av + r,
Av),
\end{gather*}
Here, for $g \in G$, we set $g(v) = Av + b$
($A \in {\rm O}(n+1)$, $b \in {\mathbf{R}}^{n+1}$).
Then we get the diagram:
\begin{gather*}
{\mathbf{E}}^{n+1} = G/H' \leftarrow {\mathbf{E}}^{n+1}\times S^n = G/H
\rightarrow {\mathbf{R}}\times S^n = G/H'',
\end{gather*}
for the isotropy groups $H$, $H'$ and $H''$ satisfying
\begin{gather*}
H' \cong {\rm O}(n+1) \hookleftarrow H \cong {\rm O}(n)
\hookrightarrow H'' \cong {\rm O}(n) \ltimes {\mathbf{R}}^n.
\end{gather*}
The $G$-invariant volume forms on ${\mathbf{E}}^{n+1}$ and ${\mathbf{R}}\times S^n$
are unique up to non-zero constant.
We can construct
the Monge--Amp{\`e}re system $\mathcal{M}=\langle \theta, d\theta,
\omega \rangle $
with the Lagrangian pair $(E_1,E_2)$ globally on $M=T_1{\mathbf{E}}^{n+1}$.
\end{Rem}

{\bf 8.3.}
Monge--Amp{\`e}re system on $T_1S^{n+1}$
as $K=c$ in $S^{n+1}$.
Consider
${\mathbf{E}}^{n+2}\times {\mathbf{E}}^{n+2}$ with
coordinates
$x = (x_0, x_1, \dots, x_{n+1}), y = (y_0, y_1, \dots, y_{n+1})$.
Set $x\cdot y = \sum_{i=0}^{n+1}x_iy_i$,
the standard inner product.
Consider
\begin{gather*}
T_1S^{n+1} = \big\{ (x, y) \in  {\mathbf{E}}^{n+2}\times {\mathbf{E}}^{n+2}
\,|\,
\vert x \vert = 1,\, \vert y\vert = 1, \, x\cdot y = 0 \big\},
\end{gather*}
the unit tangent bundle of~$S^{n+1}$.
The standard contact structure~$D$ of~$T_1S^{n+1}$
is def\/ined by the contact form
$\theta = \sum\limits_{i=0}^{n+1} y_idx_i$.
Then we have the double Legendrian f\/ibration
\begin{gather*}
\xymatrix{
& M=T_1S^{n+1} \ar[ld]_{\pi_1} \ar[rd]^{\pi_2}  &\\
W_1=S^{n+1} &  & W_2=S^{n+1}}
\end{gather*}
where
$\pi_1(x, y) = x$, $\pi_2(x, y) = y$.

\begin{Lem}
\label{sphere-M-A}
The above bi-Legendrian fibration is contactomorphic to the standard model:
The Lagrangian pair $(E_1, E_2) = ({\rm Ker}(\pi_2)_*, {\rm Ker}(\pi_1)_*)$
is flat.
\end{Lem}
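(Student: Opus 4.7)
My plan is to imitate the proof of Lemma~\ref{Euclid-M-A}, constructing explicit ``central-projection'' coordinates around each point of $T_1S^{n+1}$ that give a local contactomorphism with the standard Darboux model and simultaneously intertwine the Lagrangian pairs. By the transitive $O(n+2)$-action on $T_1S^{n+1}$, it suffices to treat one open neighbourhood, which I take to be $U=\{(x,y)\in T_1 S^{n+1}\,|\,x_0>0,\ y_{n+1}>0\}$.

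On $U$, I would introduce the $2n+1$ functions
\begin{gather*}
x'_i=\frac{x_i}{x_0},\qquad z=\frac{x_{n+1}}{x_0},\qquad p'_i=-\frac{y_i}{y_{n+1}}\qquad (1\leq i\leq n),
\end{gather*}
and check that $\Phi\colon U\to {\mathbf{R}}^{2n+1}$, $(x,y)\mapsto (x'_1,\dots,x'_n,z,p'_1,\dots,p'_n)$, is a diffeomorphism onto an open subset. The inverse is explicit: from $(x',z)$ the condition $|x|=1$ with $x_0>0$ recovers $x_0$ and hence all of $x$; after that, the $n$ equations $p'_i=-y_i/y_{n+1}$ together with $|y|=1$, $x\cdot y=0$ and $y_{n+1}>0$ recover $y$ uniquely.

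The crux is the computation of $\Phi^*\theta_{\rm{st}}$. Using $\sum_{i=0}^{n+1}x_iy_i=0$ in an essential way, a short direct calculation collapses to
\begin{gather*}
\Phi^*\left(dz-\sum_{i=1}^n p'_i\,dx'_i\right)=\frac{1}{x_0\,y_{n+1}}\sum_{i=0}^{n+1} y_i\,dx_i=\frac{1}{x_0\,y_{n+1}}\,\theta,
\end{gather*}
so $\Phi$ is a contactomorphism from $(U,D)$ to $(\Phi(U),D_{\rm{st}})$. The Lagrangian-pair identification is then routine: any vector in $E_1=\mathrm{Ker}(\pi_2)_*$ kills every $dy_j$, hence every $dp'_j$, so $\Phi_*E_1\subset\{dp_1=\cdots=dp_n=0\}\cap D_{\rm{st}}=E_1^{\rm{st}}$, and equality follows by dimension; the argument for $E_2$ is symmetric with $x$ and $y$ interchanged.

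The essentially only delicate step is the key identity $\Phi^*\theta_{\rm{st}}=\theta/(x_0y_{n+1})$, which depends crucially on the ambient constraint $x\cdot y=0$ defining $T_1 S^{n+1}$; everything else is bookkeeping exactly parallel to Lemma~\ref{Euclid-M-A}.
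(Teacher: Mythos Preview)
Your proof is correct and follows essentially the same approach as the paper's: both introduce central-projection coordinates $x'_i=x_i/x_0$, $z=\pm x_{n+1}/x_0$, $p'_i=\mp y_i/y_{n+1}$ on a chart where $x_0$ and $y_{n+1}$ are nonzero, and both reduce the flatness claim to the identity $\theta=\pm x_0y_{n+1}\bigl(dz-\sum p'_i\,dx'_i\bigr)$, obtained via the constraint $x\cdot y=0$. Your sign conventions differ harmlessly from the paper's, and your explicit invocation of the transitive $O(n+2)$-action to reduce to a single chart, together with the direct check that $\Phi_*E_i=E_i^{\rm st}$, are welcome clarifications that the paper leaves to the reader.
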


\begin{proof}
For instance, on the open subset $U = \{ x_0 \not= 0, \,y_{n+1} \not= 0\}$
of $T_1S^{n+1}$, consider the system of coordinates
$x'_i = \frac{x_i}{x_0}$, $z' = - \frac{x_{n+1}}{x_0}$, $ p'_i =
\frac{y_i}{y_{n+1}}$,
$1 \leq i \leq n$. Moreover we set $\tilde{z}' = - \frac{y_0}{y_{n+1}}$.
Then $\tilde{z}' + z' - \sum\limits_{i=1}^n x'_ip'_i = 0$ is satisf\/ied.
Moreover we have
\begin{gather*}
\theta = - x_0y_{n+1}\left(dz' - \sum_{i=1}^n p'_idx'_i\right)
= x_0y_{n+1}\left(d\tilde{z}' - \sum_{i=1}^n x'_i dp'_i\right).
\end{gather*}
Thus we easily see that the
dif\/feomorphism $\Phi \colon  U \to {\mathbf{R}}^{2n+1}$ def\/ined by $\Phi(x, y)
= (x', z', p')$ provides the required contactomorphism.
\end{proof}

We set
\begin{gather*}
\Omega_1 =
c\ i_X(dx_0\wedge\cdots\wedge dx_{n+1}),
\end{gather*}
the standard volume form on $W_1=S^{n+1}$ multiplied by $c(\not= 0)\in {\mathbf{R}}$,
and set
\begin{gather*}
\Omega_2 =
i_Y(dy_0\wedge\cdots\wedge dy_{n+1}),
\end{gather*}
the
standard volume form on $W_2=S^{n+1}$.
Here $X = \sum\limits_{i=0}^{n+1} x_i\frac{\partial}{\partial x_i}$ and
$Y = \sum\limits_{i=0}^{n+1} y_i\frac{\partial}{\partial y_i}$.
The Reeb vector f\/ield $R$ on $T_1S^{n+1}$ is given
by
$R = \sum\limits_{i=0}^{n+1} \big(y_i\frac{\partial}{\partial x_i} -
x_i\frac{\partial}{\partial y_i}\big)$.
Then we have the following, which is the geometric form of
the equation ``Gaussian curvature $=c$''.

\begin{Prop}
The associated Monge--Amp{\`e}re system~$\mathcal{M}$
with Lagrangian pair on $M=T_1S^{n+1}$
is generated by~$\theta$ and
\begin{gather*}
\omega    =
i_R \big({\pi}_1^*{\Omega}_1-{\pi}_2^*{\Omega}_2\big)\\
\hphantom{\omega}{}  =
c \sum_{0 \leq j  < i \leq n+1} (-1)^{i+j} x_iy_j
dx_0 \wedge \cdots \wedge \breve{dx_j} \wedge \cdots \wedge
\breve{dx_i}
\wedge \cdots \wedge dx_{n+1}\\
\hphantom{\omega=}{} -  c \sum_{0 \leq i  <  k \leq n+1} (-1)^{i+k} x_iy_k
dx_0 \wedge \cdots \wedge \breve{dx_i} \wedge \cdots \wedge
\breve{dx_k}
\wedge \cdots \wedge dx_{n+1}
\\
\hphantom{\omega=}{} +
\sum_{0 \leq j  < i \leq n+1} (-1)^{i+j} y_ix_j
dy_0 \wedge \cdots \wedge \breve{dy_j} \wedge \cdots \wedge
\breve{dy_i}
\wedge \cdots \wedge dy_{n+1} \\
\hphantom{\omega=}{} -
\sum_{0 \leq i < k \leq n+1} (-1)^{i+k} y_ix_k
dy_0 \wedge \cdots \wedge \breve{dy_i} \wedge \cdots \wedge
\breve{dy_k}
\wedge \cdots \wedge dy_{n+1}.
\end{gather*}
The Gaussian curvature
of the projection to $W_1=S^{n+1}$ of a geometric solution
of~$\mathcal{M}$
is equal to constant $c$ outside of singular locus, while
the Gaussian curvature
of the projection to~$W_2=S^{n+1}$ of a geometric solution
of~$\mathcal{M}$
is equal to constant~$\frac{1}{c}$ outside of singular locus,
\end{Prop}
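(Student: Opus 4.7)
The plan is to break the proof into an explicit computation of $\omega$ and a geometric translation of the equation $f^*\omega = 0$ into the Gauss curvature condition.

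For the formula, I would compute directly. Writing $\Omega^x := dx_0\wedge\cdots\wedge dx_{n+1}$ and $\Omega^y := dy_0\wedge\cdots\wedge dy_{n+1}$, the pull-backs from the two copies of $S^{n+1}$ are $\pi_1^*\Omega_1 = c\, i_X\Omega^x$ and $\pi_2^*\Omega_2 = i_Y\Omega^y$, viewed as $(n+1)$-forms on the ambient $\mathbf{R}^{n+2}\times\mathbf{R}^{n+2}$ and then restricted to $M$. Decompose $R = R^x - R^y$ with $R^x = \sum_i y_i\,\partial/\partial x_i$ and $R^y = \sum_i x_i\,\partial/\partial y_i$, so that $i_R\pi_1^*\Omega_1 = c\, i_{R^x}i_X\Omega^x$ and $i_R\pi_2^*\Omega_2 = -i_{R^y}i_Y\Omega^y$. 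I would expand each double contraction by indexing monomials by the ordered pair $(a,b)$, $a<b$, of omitted indices. The two ways of producing such a monomial (contracting with $X$ at position $b$ and then with $R^x$ at $a<b$, versus the reverse) carry opposite relative signs and combine into $(-1)^{a+b}(x_by_a - x_ay_b)\,dx_0\wedge\cdots\breve{dx_a}\cdots\breve{dx_b}\cdots\wedge dx_{n+1}$. Splitting this antisymmetric expression into its two pieces and relabeling $(a,b)=(j,i)$, $(a,b)=(i,k)$ produces the first two displayed sums; the $y$-side is analogous, with an extra global sign arising from $-R^y$, and yields the last two summands.

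For the Gauss curvature assertion, let $f\colon L \to T_1S^{n+1}$ be a geometric solution, and write $x = \pi_1\circ f$, $y = \pi_2\circ f$. At points where $\pi_1\circ f$ is immersive, its image $N\subset S^{n+1}$ is a hypersurface, and the Legendrian condition $f^*\theta = f^*\sum y_i\, dx_i = 0$ is precisely the statement that $y$ is the unit normal to $N$; hence $y$ factors through the Gauss map $G\colon N \to S^{n+1}$. I would then evaluate $\omega(u_1,\dots,u_n) = (\pi_1^*\Omega_1 - \pi_2^*\Omega_2)(R, u_1,\dots,u_n)$ on a local frame of $L$. Since $\pi_{1*}R = y$ is the unit vector in $T_xS^{n+1}$ normal to $T_xN$, the first piece equals $c$ times $(\pi_1\circ f)^*dV_N$. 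Since $\pi_{2*}R = -x$ is the unit vector in $T_yS^{n+1}$ normal to $T_yG(N)$, the second piece equals $(\pi_2\circ f)^*dV_{G(N)}$. Because the Gauss map satisfies $G_* = -W$ for the Weingarten operator $W$ of $N$, one has $(\pi_2\circ f)^*dV_{G(N)} = (\det W)\cdot (\pi_1\circ f)^*dV_N = K\cdot(\pi_1\circ f)^*dV_N$ up to orientation; hence $f^*\omega=0$ becomes $K = c$.

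The dual statement for $W_2$ then follows from the manifest symmetry $(x,y)\leftrightarrow(y,x)$, which exchanges $\pi_1$ and $\pi_2$, reverses $R$, and transforms $\omega$ into a non-zero constant multiple of $i_R(\frac{1}{c}\pi_1^*\Omega - \pi_2^*\Omega)$ with the two target spheres swapped; this is precisely the Monge--Amp\`ere system corresponding to $K=1/c$, so the conclusion for the second projection is immediate. The only subtle point in the whole argument is the careful tracking of signs in the identification of $\det(-G_*)$ with the ratio of induced volume forms; once a consistent choice of orientations on $L$, $N$, and $G(N)$ is fixed, the remainder is purely mechanical.
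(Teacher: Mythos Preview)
Your proposal is correct. The explicit double-contraction computation of $i_{R^x}i_X\Omega^x$ and $i_{R^y}i_Y\Omega^y$ is exactly the routine calculation behind the displayed formula, and your bookkeeping of the signs $(-1)^{a+b}$ according to whether the second omitted index lies above or below the first is right.

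The paper does not supply a proof of this proposition; the formula is asserted, and the curvature interpretation is justified only indirectly, via the flatness of the Lagrangian pair (Lemma~\ref{sphere-M-A}) and the reduction carried out in Section~\ref{Homogeneous M-A systems with Lagrangian pairs.}.5, where the analogous Euclidean system is written in Darboux coordinates as
\[
c\,dx'_1\wedge\cdots\wedge dx'_n-(-1)^n\big(1+\textstyle\sum {p'_i}^2\big)^{-(n+2)/2}dp'_1\wedge\cdots\wedge dp'_n,
\]
and then matched against the known local expression $\mathrm{Hess}(f)=(-1)^n c\big(1+\sum p_i^2\big)^{(n+2)/2}$ for $K=c$. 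So the paper's implicit route is \emph{local-coordinate reduction to a Hessian equation}, whereas yours is an \emph{intrinsic volume-form argument}: you read off $f^*\omega=0$ directly as $c\,dV_N = G^*dV_{S^{n+1}}$ via the identifications $\pi_{1*}R=y\perp T_xN$ and $\pi_{2*}R=-x\perp T_yG(N)$. Your approach is cleaner and makes the duality $c\leftrightarrow 1/c$ transparent without any coordinate work; the paper's has the advantage of fitting the spherical, Euclidean and hyperbolic cases into the common Hessian framework of Propositions~\ref{Hesse-MA} and~\ref{E-L-MA}.

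One caution: your disclaimer about signs is doing real work. With the convention $W=-d\nu$ one has $\det(G_*)=(-1)^nK$, and the extra $(-1)^n$ is absorbed exactly as in the Euclidean formula above. It would strengthen the write-up to make that cancellation explicit for one fixed orientation rather than leaving it as ``mechanical''.
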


\begin{Rem}
Note that the Gaussian curvature $K$ of a hypersurface
in the unit sphere $S^{n+1}$ and its sectional curvature $S$
as a Riemannian manifold are related by
$S = K + 1$. For example, the great sphere $S^{n} \subset S^{n+1}$
has the constant Gauss map to $S^{n+1}$ and $K = 0$,
while has the sectional curvature $1$.
\end{Rem}

\begin{Rem}
The group $G = {\rm O}(n+2)$ acts on $W_1=S^{n+1}$, on $W_2=S^{n+1}$
 and on $S^{n+1}\times S^{n+1}$, thus on
$T_1S^{n+1} = \{ (x, y) \in S^{n+1}\times S^{n+1} \,|\, x\cdot y = 0\}$.
The contact structure $D = \{ \theta = 0\}$ and the contact form
$\theta = \sum\limits_{i=0}^{n+1} y_i dx_i$ are $G$-invariant, and
any contact form def\/ining $D$ is a non-zero constant multiple of~$\theta$.

We get the diagram:
\begin{gather*}
W_1=S^{n+1} = G/H' \leftarrow M=T_1S^{n+1} = G/H
\rightarrow W_2=S^{n+1} = G/H'',
\end{gather*}
for the isotropy groups $H$, $H'$ and $H''$ satisfying
\begin{gather*}
H' \cong {\rm O}(n+1) \hookleftarrow H \cong {\rm O}(n)
\hookrightarrow H'' \cong {\rm O}(n+1).
\end{gather*}
The $G$-invariant volume forms on $W_1=S^{n+1}$ and $W_2=S^{n+1}$
are unique up to non-zero constant.
We can construct
the Monge--Amp{\`e}re system $\mathcal{M}=\langle \theta, d\theta,
\omega \rangle $
with Lagrangian pair globally on $M=T_1S^{n+1}$.
\end{Rem}

{\bf 8.4.}
Monge--Amp{\`e}re system on $T_1H^{n+1}$
as $K=c$ in $H^{n+1}$.
In general, let us consider ${\mathbf{R}}_r^{n+2} = {\mathbf{R}}^{n+2}$
with the inner product
\begin{gather*}
x\cdot y = - \sum_{i=0}^{r-1} x_iy_i +
\sum_{j=r}^{n+1} x_jy_j,
\end{gather*}
of signature $(r, n+2-r)$.
We set, for $\varepsilon_1 = 0, \pm 1$, $\varepsilon_2 = 0, \pm 1$,
and for a~real number~$a$,
\begin{gather*}
S^{2n+1}_{\varepsilon_1, \varepsilon_2, a} =
\big\{ (x, y) \in {\mathbf{R}}_r^{n+2}\times {\mathbf{R}}_r^{n+2}
\,|\, x\cdot x = \varepsilon_1, \, y\cdot y = \varepsilon_2, \,
x\cdot y = a , \, x \not= 0, \, y \not= 0 \big\},
\end{gather*}
provided that $S^{2n+1}_{\varepsilon_1, \varepsilon_2, a} \not= \varnothing$.
Moreover we set $S^{n+1}_\varepsilon = \{ x \in {\mathbf{R}}_r^{n+2} \,|\, x\cdot x =
\varepsilon, \, x \not= 0 \}$ for $\varepsilon = 0, \pm 1$.
On $S^{2n+1}_{\varepsilon_1, \varepsilon_2, a}$, the contact structure $D$
is
def\/ined by
$\theta = - \sum\limits_{i=0}^{r-1} y_i dx_i +
\sum\limits_{j=r}^{n+1} y_j dx_j$. We have the double
Legendrian f\/ibration
\begin{gather*}
\xymatrix{
& M=S^{2n+1}_{\varepsilon_1, \varepsilon_2} \ar[ld]_{\pi_1} \ar[rd]^{\pi_2}  &\\
W_1 = S^{n+1}_{\varepsilon_1} &  & W_2 = S^{n+1}_{\varepsilon_2}}
\end{gather*}
by $\pi_1(x, y) = x$ and $\pi_2(x, y) = y$.

In the case where $r=1$, $\varepsilon_1=-1$, $\varepsilon_2=1$, $a=0$,
since
\begin{gather*}
S_{-1,1,0}^{2n+1}=T_1H^{n+1}=T_{-1}S_1^{n+1}\subset
{\mathbf{R}}_1^{n+2}\times {\mathbf{R}}_1^{n+2},\\
 S_{-1}^{n+1}=H^{n+1}\colon \ \text{the hyperbolic space},
\qquad
S_1^{n+1}\colon \ \text{the de Sitter space},
\end{gather*}
we have the double Legendrian f\/ibration
\begin{gather*}
\xymatrix{
& M=T_1H^{n+1}\cong H^{n+1}\times S^n \ar[ld]_{\pi_1} \ar[rd]^{\pi_2}  &\\
W_1 = H^{n+1} &  & W_2 =S_1^{n+1}}
\end{gather*}
(cf.\ the hyperbolic Gauss map~\cite{I-P-S}).

\begin{Lem}
\label{Minkowskii-M-A}
The above bi-Legendrian fibration is contactomorphic to the standard model.
The Lagrangian pair $(E_1, E_2) = ({\rm Ker}(\pi_2)_*, {\rm Ker}(\pi_1)_*)$
is flat.
\end{Lem}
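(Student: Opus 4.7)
The plan is to imitate the proofs of Lemmas \ref{Euclid-M-A} and \ref{sphere-M-A} verbatim, modifying only the signs inherited from the Lorentzian inner product on ${\mathbf{R}}_1^{n+2}$. Since $x\cdot x = -1$ on $H^{n+1}$ forces $x_0 \neq 0$, it suffices to cover $M = T_1 H^{n+1}$ by the open sets $\{y_j \neq 0\}$ for $j = 0, 1, \dots, n+1$; by symmetry I treat only the chart $U = \{(x,y) \in M \,|\, y_{n+1} \neq 0\}$.

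On $U$ I introduce the rational coordinates
\begin{gather*}
x'_i = \frac{x_i}{x_0}, \quad z' = -\frac{x_{n+1}}{x_0}, \quad
p'_i = \frac{y_i}{y_{n+1}}, \quad \tilde{z}' = -\frac{y_0}{y_{n+1}} \qquad (1 \leq i \leq n),
\end{gather*}
and observe that dividing the constraint $x\cdot y = -x_0y_0 + \sum_{j=1}^{n+1}x_jy_j = 0$ by $x_0y_{n+1}$ yields exactly the Legendre-type relation
\begin{gather*}
\tilde{z}' + \sum_{i=1}^n x'_i p'_i - z' = 0.
\end{gather*}
This shows that $(x'_1,\dots,x'_n,z',p'_1,\dots,p'_n)$ is a system of coordinates on $U$ and defines a diffeomorphism $\Phi \colon U \to \Phi(U) \subset {\mathbf{R}}^{2n+1}$.

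The key calculation is the contact form. Writing $x_i = x_0 x'_i$, $x_{n+1} = -x_0 z'$, $y_0 = -y_{n+1}\tilde{z}'$, $y_i = y_{n+1} p'_i$ and differentiating, one computes
\begin{gather*}
\theta = -y_0 dx_0 + \sum_{j=1}^{n+1} y_j dx_j
 = y_{n+1}\left(\tilde{z}' + \sum_{i=1}^n p'_i x'_i - z'\right) dx_0 - x_0 y_{n+1}\left(dz' - \sum_{i=1}^n p'_i dx'_i\right).
\end{gather*}
Using the constraint the first term vanishes on $U$, so $\theta = -x_0 y_{n+1}\,\Phi^*\theta_{\rm st}$, a non-vanishing multiple of the standard contact form (since $x_0 \neq 0$ and $y_{n+1}\neq 0$ on $U$). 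Hence $\Phi$ is a contactomorphism.

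It remains to check that $\Phi$ carries $(E_1, E_2) = (\mathrm{Ker}(\pi_2)_*, \mathrm{Ker}(\pi_1)_*)$ to the standard Lagrangian pair $(E_1^{\rm st}, E_2^{\rm st})$. Vectors in $E_2 = \ker \pi_{1*}$ kill all $dx_j$; in the new coordinates this is equivalent to $dx'_i = 0$ and $dz' = 0$, so $\Phi_* E_2 = E_2^{\rm st} = \langle \partial/\partial p'_i\rangle$. Dually, vectors in $E_1 = \ker \pi_{2*}$ kill all $dy_j$, equivalent to $dp'_i = 0$ and $d\tilde z' = 0$; modulo $\theta$ the latter becomes $dz' - \sum p'_i dx'_i = 0$, so $\Phi_* E_1 = E_1^{\rm st}$. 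Covering $M$ by analogous charts on $\{y_j \neq 0\}$ (with the roles of indices permuted, replacing $z'$ and $\tilde z'$ by the corresponding ratios), the flatness of $(E_1, E_2)$ follows. The only obstacle is the careful sign bookkeeping coming from the $-y_0 dx_0$ term in $\theta$ and the constraint $-x_0^2 + \sum x_j^2 = -1$; once the signs in the definitions of $z'$ and $\tilde z'$ are fixed as above, the calculation is fully parallel to that of Lemma \ref{sphere-M-A}.
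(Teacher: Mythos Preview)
Your proof is correct and follows essentially the same approach as the paper's own proof: you use the same rational coordinates $x'_i = x_i/x_0$, $z' = -x_{n+1}/x_0$, $p'_i = y_i/y_{n+1}$ on $\{x_0\neq 0,\ y_{n+1}\neq 0\}$ and verify $\theta = -x_0 y_{n+1}\,\Phi^*\theta_{\rm st}$. Your sign convention $\tilde z' = -y_0/y_{n+1}$ differs from the paper's $\tilde z' = y_0/y_{n+1}$, which merely changes the form of the Legendre relation; your added observation that $x\cdot x = -1$ forces $x_0\neq 0$ (and, in fact, $y\cdot y = 1$ forces some $y_j$ with $j\geq 1$ to be nonzero, so the charts $\{y_j\neq 0\}$, $1\leq j\leq n+1$, already suffice) and the explicit check that $\Phi_*E_i = E_i^{\rm st}$ are welcome elaborations beyond the paper's terse argument.
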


\begin{proof}
For instance, on the open subset $\{ x_0 \not= 0,\,
y_{n+1} \not= 0\}$, we take the system of coordinates
$x'_i = \frac{x_i}{x_0}$, $p'_i = \frac{y_i}{y_{n+1}}$, $z' =
- \frac{x_{n+1}}{x_0}$.
Moreover we set $\tilde{z}' = \frac{y_0}{y_{n+1}}$.
Then we have $z' + \tilde{z}'  + \sum\limits_{i=1}^n x'_i p'_i = 0$, and
$\theta = - x_0y_{n+1}\Big(dz' - \sum\limits_{i=1}^n p'_i dx'_i\Big)$.
Then we see that the bi-Legendrian f\/ibration is contactomorphic to
the standard model.
\end{proof}

We set
\begin{gather*}
\Omega_1 =
c\ i_X(dx_0\wedge\cdots\wedge dx_{n+1}),
\end{gather*}
the standard volume form on $W_1=H^{n+1}$ multiplied by $c\in {\mathbf{R}}$ $(c\not= 0)$,
and set
\begin{gather*}
\Omega_2 =
i_Y(dy_0\wedge\cdots\wedge dy_{n+1}),
\end{gather*}
the
standard volume form on $W_2=S_1^{n+1}$.
Here $X = -x_0\frac{\partial}{\partial x_0}+\sum\limits_{i=1}^{n+1} x_i\frac{\partial}{\partial x_i}$
and
$Y = -y_0\frac{\partial}{\partial y_0}+\sum\limits_{i=1}^{n+1} y_i\frac{\partial}{\partial y_i}$.
The Reeb vector f\/ield~$R$ on~$T_1H^{n+1}$ is given
by
$R = \sum\limits_{i=0}^{n+1} \big(y_i\frac{\partial}{\partial x_i} +
x_i\frac{\partial}{\partial y_i}\big)$.
Then we have the following.

\begin{Prop}
The associated Monge--Amp{\`e}re system $\mathcal{M}$
with Lagrangian pair on $M=T_1H^{n+1}$
is generated by $\theta$ and
\begin{gather*}
\omega    =
i_R \big({\pi}_1^*{\Omega}_1-{\pi}_2^*{\Omega}_2\big)\\
\hphantom{\omega}{} =
c \sum_{i=1}^{n+1} (-1)^{i} x_0y_i
dx_1 \wedge \cdots \wedge \breve{dx_i} \wedge
\cdots \wedge dx_{n+1}
\\
\hphantom{\omega=}{}
+ c \sum_{0 \leq j  < i \leq n+1} (-1)^{i+j} x_iy_j
dx_0 \wedge \cdots \wedge \breve{dx_j} \wedge \cdots \wedge
\breve{dx_i}
\wedge \cdots \wedge dx_{n+1}
\\
\hphantom{\omega=}{} -  c \sum_{1 \leq i  <  k \leq n+1} (-1)^{i+k} x_iy_k
dx_0 \wedge \cdots \wedge \breve{dx_i} \wedge \cdots \wedge
\breve{dx_k}
\wedge \cdots \wedge dx_{n+1}
\\
\hphantom{\omega=}{}
+ \sum_{i=1}^{n+1} (-1)^{i} y_0x_i
dy_1 \wedge \cdots \wedge \breve{dy_i} \wedge
\cdots \wedge dy_{n+1}
\\
\hphantom{\omega=}{}  +
\sum_{0 \leq j  < i \leq n+1} (-1)^{i+j} y_ix_j
dy_0 \wedge \cdots \wedge \breve{dy_j} \wedge \cdots \wedge
\breve{dy_i}
\wedge \cdots \wedge dy_{n+1} \\
\hphantom{\omega=}{} -
\sum_{1 \leq i < k \leq n+1} (-1)^{i+k} y_ix_k
dy_0 \wedge \cdots \wedge \breve{dy_i} \wedge \cdots \wedge
\breve{dy_k}
\wedge \cdots \wedge dy_{n+1}.
\end{gather*}

The Gaussian curvature
of the projection to $W_1=H^{n+1}$ of a geometric solution
of~$\mathcal{M}$
is equal to constant $c$ outside of singular locus.
\end{Prop}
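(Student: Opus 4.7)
The plan is to prove the proposition in two stages: (A) establish the explicit formula for $\omega = i_R(\pi_1^*\Omega_1 - \pi_2^*\Omega_2)$ by direct computation, and (B) interpret the vanishing condition $f^*\omega = 0$ as the equation $K = c$ for the associated hypersurface in~$H^{n+1}$.

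For stage (A), I would first expand $i_X(dx_0 \wedge \cdots \wedge dx_{n+1})$ for the Minkowski position vector $X = -x_0 \partial/\partial x_0 + \sum_{i=1}^{n+1} x_i \partial/\partial x_i$. The coefficient $-x_0$ contributes the term $-x_0\, dx_1\wedge\cdots\wedge dx_{n+1}$, while each $x_i\partial/\partial x_i$ with $i\ge 1$ contributes $(-1)^i x_i\, dx_0\wedge\cdots\wedge \breve{dx_i}\wedge\cdots\wedge dx_{n+1}$. Since this $(n+1)$-form involves only $x$-variables, its pullback by $\pi_1(x,y)=x$ coincides with itself. Applying $i_R$ with $R = \sum_j\bigl(y_j \partial/\partial x_j + x_j \partial/\partial y_j\bigr)$ contracts only the $\partial/\partial x_j$-part, and collecting terms by whether $j < i$ or $j > i$ produces the first three summations in the displayed formula; the extra $x_0 y_i$ line (absent in the spherical case of Section~8.3) arises precisely from the $-x_0$ summand of~$X$. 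An entirely parallel computation, exchanging the roles of $x,y$ and of $X,Y$, yields the remaining three summations from $i_R \pi_2^*\Omega_2$.

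For stage (B), I would apply the general framework of Section~\ref{A method to construct systems with Lagrangian pairs.}. Let $f\colon L \to T_1 H^{n+1}$ be a geometric solution. Away from its Legendrian singular locus, $\pi_1\circ f$ realizes $L$ as an immersed oriented hypersurface $\Sigma \subset H^{n+1}$ and $\pi_2\circ f$ is the hyperbolic Gauss map $\nu\colon \Sigma \to S_1^{n+1}$ \cite{I-P-S}. For a tangent basis $e_1,\dots,e_n$ of $T_xL\subset D_x$, the identity $\pi_{1*}R = y$ combined with the fact that $y$ is the unit normal to $\Sigma$ at $\pi_1(f(x))$ gives
\begin{gather*}
(\pi_1^*\Omega_1)(R,e_1,\dots,e_n) = c\,\mathrm{vol}_\Sigma(\pi_{1*}e_1,\dots,\pi_{1*}e_n).
\end{gather*}
Similarly $\pi_{2*}R = x$ is the de Sitter unit normal to the Gauss image, and the pushforwards $\pi_{2*}e_j = d\nu(e_j)$ span its tangent space; hence $(\pi_2^*\Omega_2)(R,e_1,\dots,e_n)$ equals $\det(d\nu)$ times $\mathrm{vol}_\Sigma(\pi_{1*}e_1,\dots,\pi_{1*}e_n)$, and the determinant of the hyperbolic Weingarten map is by definition the Gauss--Kronecker curvature~$K$. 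Consequently $\omega|_{TL} = (c-K)\,\mathrm{vol}_L$, so $f^*\omega = 0$ is equivalent to $K = c$ on the regular locus.

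The main obstacle will be careful bookkeeping of signs under the Lorentzian pairing: the sign flip in the first coefficient of $X$ (and of $Y$) against the spherical case changes the shape of one group of terms in~$\omega$, and one must also verify that the orientations chosen for $\Omega_1$, $\Omega_2$, for the induced hypersurface, and for its Gauss image are consistent, so that the curvature appears as $+K$ rather than $-K$. I would cross-check these signs against the spherical calculation of Section~8.3 (which should be recovered by formally reverting the signature to $(0,n+2)$) and against the lowest-dimensional case $n=1$ (curves in the hyperbolic plane), where the condition reduces to a classical geodesic-curvature equation.
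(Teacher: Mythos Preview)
Your proposal is correct and follows exactly the line the paper intends: the paper states this proposition without proof, treating the explicit formula for $\omega$ as the outcome of the direct contraction $i_R\bigl(\pi_1^*\Omega_1-\pi_2^*\Omega_2\bigr)$ with the data $X$, $Y$, $R$ specified immediately before, and treating the curvature interpretation as the standard reading of the Gauss map construction of Section~\ref{A method to construct systems with Lagrangian pairs.} (cf.\ the parallel spherical case in Section~8.3). Your two stages (A) and (B) supply precisely these missing details; the only caveat is that the ``extra'' $x_0y_i$ line is not so much absent in the spherical case as absorbed into the $i=0$ part of the second sum there, and it splits off here with the opposite sign because $X$ carries $-x_0$ rather than $+x_0$.
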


\begin{Rem}
The group $G = {\rm O}(1,n+1)$ acts on $W_1=H^{n+1}$, on $W_2=S_1^{n+1}$
and on $H^{n+1}\times S_1^{n+1}$, thus on
$T_1H^{n+1} = \{ (x, y) \in H^{n+1}\times S_1^{n+1} \,|\, x\cdot y = 0\}$.
The contact structure $D = \{ \theta = 0\}$ and the contact form
$\theta = - y_0 dx_0 + \sum\limits_{i=1}^{n+1} y_i dx_i$ are $G$-invariant, and
any contact form def\/ining $D$ is a non-zero constant multiple of $\theta$.

We get the diagram:
\begin{gather*}
W_1=H^{n+1} = G/H' \leftarrow M=T_1H^{n+1} = G/H
\rightarrow W_2=S_1^{n+1} = G/H'',
\end{gather*}
for the isotropy groups $H$, $H'$ and $H''$ satisfying
\begin{gather*}
H' \cong {\rm O}(n+1) \hookleftarrow H \cong {\rm O}(n)
\hookrightarrow H'' \cong {\rm O}(1,n).
\end{gather*}
The $G$-invariant volume forms on $W_1=H^{n+1}$ and $W_2=S_1^{n+1}$
are unique up to non-zero constant.
We can construct
the Monge--Amp{\`e}re system $\mathcal{M}=
\langle \theta, d\theta, \omega \rangle $
with Lagrangian pair globally on $M=T_1H^{n+1}$.
\end{Rem}

{\bf 8.5.}
The Monge--Amp{\`e}re systems introduced in
Sections~8.1--8.4
are all Euler--Lagrange Monge--Amp{\`e}re systems.

In fact, by Lemmas~\ref{Euclid-M-A}, \ref{sphere-M-A}, \ref{Minkowskii-M-A},
there exists a system of local coordinates
$(x', z', p') = (x'_1, \dots, x'_n, z', p'_1, \dots, p'_n)$
at any point of~$M$
such that a local contact form is given by
$\theta = dz' - \sum\limits_{i=1}^n p'_idx'_i$,
the Lagrangian pair is given by
\begin{gather*}
E_1   =    \{ v \in TM \,|\, \theta(v) = 0, \, dp'_1(v) = 0,
\dots, dp'_n(v) = 0 \},
\\
E_2  =  \{ u \in TM \,|\, \theta(u) = 0, \, dx'_1(u) = 0,
\dots, dx'_n(u) = 0 \},
\end{gather*}
the Monge--Amp{\`e}re system~${\mathcal M}$ is generated by
an $n$-form
\begin{gather*}
\tilde{\omega} =
f(x'_1, \dots, x'_n, z')dx'_1\wedge \cdots \wedge dx'_n -
g\left(p'_1, \dots, p'_n, \sum_{i=1}^n x'_ip'_i - z'\right)dp'_1\wedge \cdots
\wedge dp'_n
\end{gather*}
for some (pulled-back) functions $f$, $g$ on $W_1$, $W_2$ respectively.
If a local contactomorphism $\Phi \colon  M \to {\mathbf{R}}^{2n+1}$
and dif\/feomorphisms $\varphi \colon  W_1 \to {\mathbf{R}}^{n+1}$,
$\psi \colon  W_2 \to {\mathbf{R}}^{n+1}$ give the contactomorphism of
the bi-Legendrian f\/ibration to the standard model, then we have
\begin{gather*}
\big(\varphi^{-1}\big)^*\Omega_1 = f(x, z)dz \wedge dx_1 \wedge \cdots \wedge
dx_n,
\qquad
\big(\psi^{-1}\big)^*\Omega_2 = g(p, \tilde{z})d\tilde{z} \wedge dp_1
\wedge \cdots \wedge
dp_n,
\end{gather*}
for some non-zero functions $f$, $g$.

For example, we calculate $f$, $g$ explicitly in Euclidean geometry
(Section~8.2),
for the system of coordinates $x'_i = x_i$, $p'_i = - y_i/y_{n+1}$
$(1\leq i\leq n)$, $z' = x_{n+1}$ on the open set $\{ y_{n+1} \not= 0 \}$.
A~direct calculation yields
that the Monge--Amp{\`e}re system
for $K = c$ is generated by
\begin{gather*}
\tilde{\omega} = c \ dx'_1\wedge \cdots \wedge dx'_n
- (-1)^n\big(1 + {p'}_{1}^{2} + \cdots +
{p'}_{n}^{2}\big)^{-\frac{n+2}{2}}dp'_1\wedge \cdots \wedge dp'_n,
\end{gather*}
with the contact form $\theta = dz' - \sum\limits_{i=1}^n p'_idx'_i$.

Since $(\varphi^{-1})^*\Omega_1$ and $(\psi^{-1})^*\Omega_2$ are local volume forms
on ${\mathbf{R}}^{n+1}$, the argument in Example~\ref{standard model calcu} and Lemma~\ref{MA-system construction}
yields that
all Monge--Amp{\`e}re systems introduced in
Sections 8.1--8.4
are Euler--Lagrange Monge--Amp{\`e}re systems.

\subsection*{Acknowledgements}

{\sloppy The f\/irst author was partially supported by
Grants-in-Aid for Scientif\/ic Research No.~19654006.
The second author was partially supported by
Grants-in-Aid for Scientif\/ic Research (C) No.~18540105.
The authors would like to thank anonymous referees for the valuable
comments to improve the paper. 

}

\pdfbookmark[1]{References}{ref}
\LastPageEnding

\end{document}